\documentclass[11pt]{amsart}

\usepackage{amsmath,amssymb,latexsym,soul,cite,mathrsfs}
\usepackage{mathtools}

\usepackage{color,enumitem,graphicx}
\usepackage[colorlinks=true,urlcolor=blue,
citecolor=red,linkcolor=blue,linktocpage,pdfpagelabels,
bookmarksnumbered,bookmarksopen]{hyperref}
\usepackage[english]{babel}

\usepackage[left=2.9cm,right=2.9cm,top=2.8cm,bottom=2.8cm]{geometry}
\usepackage[hyperpageref]{backref}

\usepackage[colorinlistoftodos]{todonotes}
\makeatletter
\providecommand\@dotsep{5}
\def\listtodoname{List of Todos}
\def\listoftodos{\@starttoc{tdo}\listtodoname}
\makeatother

\numberwithin{equation}{section}

\newtheorem{theorem}{Theorem}[section]
\newtheorem{proposition}[theorem]{Proposition}
\newtheorem{lemma}[theorem]{Lemma}

\newtheorem{remark}[theorem]{Remark}
\newtheorem{definition}[theorem]{Definition}

\newcommand\R{\mathbb R}
\newcommand\N{\mathbb N}

\newcommand\dist{\operatorname{dist}}

\begin{document}
	\title [Connecting Orbits in Quasilinear Conservative Systems]{Prescribed--Energy Connecting Orbits for Quasilinear Conservative Systems}
	\author{Renan J. S. Isneri and Piero Montecchiari}
	
	\address[Renan J. S. Isneri]
	{\newline\indent Unidade Acad\^emica de Matem\'atica
		\newline\indent 
		Universidade Federal de Campina Grande,
		\newline\indent
		58429-970, Campina Grande - PB - Brazil}
	\email{\href{renanisneri@mat.ufcg.edu.br}{renanisneri@mat.ufcg.edu.br}}
	
	\address[Piero Montecchiari]
	{\newline\indent Dipartimento di Ingegneria Civile, Edile e Architettura,
		\newline\indent 
		Universit\`a Politecnica delle Marche,
		\newline\indent
		Via Brecce Bianche, I?60131 Ancona, Italy}
	\email{\href{p.montecchiari@staff.univpm.it}{p.montecchiari@staff.univpm.it}}
	
\begin{abstract}
	We consider quasilinear conservative systems
	\[
	(\phi(|\dot q|)\dot q)'=\nabla V(q), \qquad q\in\R^{N},
	\]
	with $\Phi$-growth kinetic term and potential $V\in C^{1}(\R^{N};\R)$. Assuming that for some $c\in\R$ the sublevel set $\{V\le c\}$ splits into two disjoint closed subsets $\mathcal V_c^{-}$ and $\mathcal V_c^{+}$, we prove the existence of trajectories $q_c$ with prescribed energy $-c$ connecting $\mathcal V_c^{-}$ and $\mathcal V_c^{+}$, obtained through an energy-constrained variational method. Although the construction yields weak solutions in an Orlicz-Sobolev setting, minimal $c$-connections are shown to be classical $C^2$ trajectories satisfying the strong energy identity $E_{q_c}\equiv -c$. The resulting entire trajectories include heteroclinic, homoclinic, and brake-type orbits. Applications to double-well, Duffing-type, and multiple pendulum systems are discussed.
\end{abstract}

\keywords{quasilinear systems; variational methods; $c$--connections; prescribed energy; heteroclinic and homoclinic orbits; brake orbits.}
\subjclass[2020]{Primary: 34C25, 34C37, 35A15, 35J62, 46E30.} 

\maketitle

\renewcommand{\contentsname}{Contents} 
\setcounter{tocdepth}{2} 


\section{Introduction}

In this paper, we study quasilinear systems of the form
\begin{equation}\label{S}
	\left(\phi\left(|\dot{q}(t)|\right)\dot{q}(t)\right)'=\nabla V(q(t))\quad\text{on}\quad\mathbb{R},
\end{equation}
where $V\in C^1(\mathbb{R}^N;\mathbb{R})$ is an autonomous potential with $N\geq 1$, and $\phi:(0,+\infty)\rightarrow (0,+\infty)$ is a $C^1$ function satisfying  
\begin{itemize}
	\item[\hypertarget{phi1}{($\phi_1$)}] $(\phi(s)s)'>0$ for all $s>0$;
	\item[\hypertarget{phi2}{($\phi_2$)}] there exist $l,m\in\mathbb{R}$ with $1< l\leq m$ such that 
	$$
	l-1\leq \dfrac{(\phi(s)s)'}{\phi(s)}\leq m-1,\quad \forall\, s>0.$$
\end{itemize}	

The conditions \hyperlink{phi1}{$(\phi_1)$} and \hyperlink{phi2}{$(\phi_2)$} allow us to study system \eqref{S} within a variational framework in Orlicz Sobolev spaces. Central to this approach is the function $\Phi: \mathbb{R} \to [0,+\infty)$ defined by
\[
\Phi(s) = \int_0^{|s|} \phi(r)\, r \, dr.
\]
This function is convex, even, nonnegative, and vanishes only at zero. Moreover, under \hyperlink{phi1}{$(\phi_1)$} and \hyperlink{phi2}{$(\phi_2)$}, it satisfies
\[
\lim_{s \to 0} \frac{\Phi(s)}{s} = 0, \qquad \lim_{s \to +\infty} \frac{\Phi(s)}{s} = +\infty,
\]
so that $\Phi$ is an $N$-function, a class of functions widely studied in the framework of Orlicz spaces (see \cite{Adams, RaoRen} and Section~ \ref{sec:preliminaries}). 

Systems of the form \eqref{S} arise at a structural level in models where the constitutive response depends nonlinearly on the velocity or strain rate. In such contexts, the function $\phi$ encodes the rheological law of the medium and may exhibit degeneracy at low velocities ($\phi(0)=0$), singular behavior near rest ($\lim_{s\to 0^+}\phi(s)=+\infty$), or superlinear growth at large strains ($\lim_{s\to+\infty}\phi(s)/s=+\infty$). These features occur, for instance, in models of non--Newtonian fluids and rate--dependent solids; see, e.g., \cite{Bird, FuchsSeregin, Papanastasiou, Gurtin}. The structural assumptions \hyperlink{phi1}{$(\phi_1)$}--\hyperlink{phi2}{$(\phi_2)$}
encompass these degenerate, singular, and superlinear regimes.

Quasilinear equations involving $\Phi$--Laplacian type operators have been extensively studied in the elliptic setting, both in bounded and in unbounded domains. Variational methods in Orlicz--Sobolev spaces provide the natural functional framework for these problems (see, e.g., the classical works 
\cite{AlvesFigueiredoSantos2014,Azzollini,BadialeCitti,BonannoBisciRadulescu2011,FukagaiItoNarukawa1,FukagaiItoNarukawa2, MihailescuRadulescu2007, MihailescuRadulescu2008}). More recent contributions in this direction include 
\cite{AlvesSilva2025,BahrouniMissaouiRadulescu2025,RadulescuSantosTavares2023,SantosPontesSoares2023}. Problems related to transition-type solutions in the $\Phi$--Laplacian setting have been investigated in \cite{AlvesIsneriMontecchiari1,AlvesIsneriMontecchiari2,AlvesIsneriMontecchiari3}.

In this paper we adopt a different perspective, viewing \eqref{S} as a Lagrangian system and focusing on searching for global connecting solutions on the whole line. System \eqref{S} is formally the Euler--Lagrange system 
associated with the Lagrangian
\[
L(q,\dot q)=\Phi(|\dot q|)+V(q).
\]
Consequently, it  can be viewed as a conservative autonomous system, 
with conjugate momentum
$p=\partial_{\dot q}L=\phi(|\dot q|)\dot q$.
In this perspective, the quantity
\[
E_q=p\cdot \dot q-L=G(|\dot q|)-V(q),
\]
with $G(s)=\phi(s)s^2-\Phi(s)$, plays the role of a mechanical energy associated with the system. 

Building upon previous variational results for quasilinear equations 
\cite{AlvesIsneriMontecchiari1,AlvesIsneriMontecchiari2,AlvesIsneriMontecchiari3}, 
and adopting the prescribed--energy variational framework 
introduced for semilinear elliptic problems in 
\cite{alessio2013stationary, alessio2016brake, [AlMbrake]} 
and later adapted to Newtonian systems in \cite{AMZ}, 
we investigate the multiplicity of connecting orbits 
at prescribed energy for system \eqref{S}.

For $c\in\mathbb{R}$ we consider the sublevel set
\[
\mathcal V_c=\{x\in\mathbb{R}^{N}:V(x)\le c\}.
\] 
We assume that for some $c$ the set $\mathcal V_c$ admits a decomposition
into two nonempty closed well separated sets:
\begin{itemize}
	\item[\hypertarget{V}{($\mathcal{V}_c$)}]  There exists $c \in \mathbb{R}$ and $\emptyset\neq\mathcal{V}_c^-\subset \mathcal{V}_c$,  $\emptyset\neq\mathcal{V}_c^+\subset \mathcal{V}_c$ such that
	$$
	\mathcal{V}_c = \mathcal{V}_c^- \cup \mathcal{V}_c^+ \quad \text{and} \quad \operatorname{dist}(\mathcal{V}_c^-, \mathcal{V}_c^+) > 0,
	$$
\end{itemize}
(here $\dist(A,B)$ denotes the usual Euclidean distance in $\mathbb R^N$).

Under assumption \hyperlink{V}{$(\mathcal V_c)$}, we consider the problem of finding a 
$c$--connection between $\mathcal V_c^-$ and $\mathcal V_c^+$.
More precisely, we look for an interval $I=(\alpha,\omega)\subset\mathbb R$, 
with $-\infty\le \alpha<\omega\le +\infty$, and a function 
$q\in L^\infty(I;\mathbb R^N)$ such that
\begin{enumerate}
\item[(i)] $q\in C^{1}(\overline I)\cap C^{2}(I)$ and solves \eqref{S} on $I$;
\item[(ii)] $V(q(t))>c$ and 
$E_q(t)=G(|\dot q(t)|)-V(q(t))=-c$ for all $t\in I$;
\item[(iii)]
$
\lim_{t\to\alpha^+}\operatorname{dist}(q(t),\mathcal V_c^-)=
\lim_{t\to\omega^-}\operatorname{dist}(q(t),\mathcal V_c^+)=0.$
\end{enumerate}
The existence of $c$--connections is obtained through a constrained minimization
of the action functional
\[
J_c(q)=\int_{\mathbb R}\big(\Phi(|\dot q(t)|)+V(q(t))-c\big)\,dt
\]
over the admissible class
\[
\Gamma_c=\Big\{q\in W^{1,\Phi}_{loc}(\R;\R^N):
V(q(t))\ge c \ \forall t,\ 
\liminf_{t\to-\infty}\dist(q(t),\mathcal V_c^-)=
\liminf_{t\to+\infty}\dist(q(t),\mathcal V_c^+)=0\Big\}.
\]
Here $W^{1,\Phi}_{loc}(\R;\R^N)$ denotes the local Orlicz--Sobolev space 
associated with the $N$--function $\Phi$ 
(see Section~\ref{sec:preliminaries}).
The constraint $V\ge c$ makes the integrand nonnegative, hence $J_c$ is bounded from below on $\Gamma_c$.

A minimizer $q\in\Gamma_c$ is defined globally on $\mathbb R$ and,
by admissibility, satisfies
\[
\liminf_{t\to -\infty}\operatorname{dist}(q(t),\mathcal V_c^-)=
\liminf_{t\to +\infty}\operatorname{dist}(q(t),\mathcal V_c^+)=0.
\]
To extract a $c$--connection, we introduce contact times
$-\infty\le \alpha<\omega\le+\infty$
corresponding to the last contact with $\mathcal V_c^-$
and the subsequent first contact with $\mathcal V_c^+$ (see definitions \eqref{Definition-alpha}, \eqref{Definition-omega}),
and consider the associated open interval $I=(\alpha,\omega)$.
On this interval the constraint is inactive, namely
\[
V(q(t))>c \quad \text{for } t\in I,
\]
and $q$ is a weak solution of \eqref{S}.

A crucial step in the present quasilinear setting consists in proving 
that $q$ satisfies a weak form of energy conservation, namely
\[
E_q(t)=G(|\dot q(t)|)-V(q(t))\equiv -c
\quad \text{for a.e. } t\in I.
\]
This identity is obtained by exploiting the minimality of $q$
with respect to suitable time rescalings (see Lemma~\ref{l12}).
In particular, $\dot q(t)\neq 0$ for a.e.\ $t\in I$.
The absence of degeneracy along $I$ then allows one to recover
the classical $C^2$--regularity of $q$ on $I$.

The existence of such a minimizer is obtained under the following 
mild coercivity condition on $J_c$:
\begin{itemize}
\item[\hypertarget{J}{($\mathcal J_c$)}] 
There exists $R>0$ such that
\[
\inf_{q\in\Gamma_c}J_c(q)
=
\inf\Big\{J_c(q):\, q\in\Gamma_c,\ 
\|q\|_{L^\infty(\mathbb R;\mathbb{R}^N)}\le R\Big\}.
\]
\end{itemize}
This abstract assumption reflects the geometry of the potential $V$ 
and ensures that the minimization of $J_c$ can be carried out within 
a uniformly bounded class of trajectories. 
In Section \ref{sec:applications}
we show that \hyperlink{J}{$(\mathcal J_c)$} is naturally satisfied 
for several classical potentials.

Under assumptions \hyperlink{V}{$(\mathcal V_c)$} and \hyperlink{J}{$(\mathcal J_c)$}, 
the above scheme yields a minimal $c$--connection, 
namely the restriction of a minimizer of $J_c$ on $\Gamma_c$.
\begin{theorem}\label{thm:main}
Assume \hyperlink{phi1}{$(\phi_1)$}--\hyperlink{phi2}{$(\phi_2)$}. 
Let $V\in C^1(\R^N;\R)$ and let $c\in\R$ be such that 
\hyperlink{V}{$(\mathcal V_c)$} and \hyperlink{J}{$(\mathcal J_c)$} hold. 
Then system \eqref{S} admits a minimal $c$--connection 
between $\mathcal V_c^-$ and $\mathcal V_c^+$.
\end{theorem}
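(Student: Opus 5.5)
The plan follows the scheme the paper has announced: minimize $J_c$ on $\Gamma_c$, isolate the interval between the last contact with $\mathcal V_c^-$ and the next contact with $\mathcal V_c^+$, and upgrade the minimizer there to a classical solution with energy $-c$.

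\emph{Step 1: existence of a minimizer.} Take a minimizing sequence $(q_n)\subset\Gamma_c$. By \hyperlink{J}{$(\mathcal J_c)$} we may assume $\|q_n\|_{L^\infty}\le R$, and $J_c(q_n)\le m_c+1$, where $m_c=\inf_{\Gamma_c}J_c$. Since $V(q_n)\ge c$, this bounds $\int_{\R}\Phi(|\dot q_n|)\,dt$ uniformly. Using $(\phi_2)$ we get $\Phi(s)\ge C\min\{s^l,s^m\}$, which gives uniform H\"older-type equicontinuity on compact intervals. Ascoli and a diagonal argument then give $q_n\to q$ locally uniformly and $\dot q_n\rightharpoonup\dot q$ weakly in $L^\Phi_{loc}$. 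Convexity of $\Phi$ and Fatou (the integrand is nonnegative) yield $J_c(q)\le m_c$, and $V(q(t))\ge c$ for all $t$ is preserved.

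The delicate point is that $q\in\Gamma_c$, i.e.\ the asymptotic conditions survive the limit; mass could escape to $\pm\infty$. I would handle this in the standard way with $\delta=\dist(\mathcal V_c^-,\mathcal V_c^+)/3$ and the bound
\[
\lambda_r=\inf\{V(x)-c:\ |x|\le R,\ \dist(x,\mathcal V_c)\ge r\}>0.
\]
Any transit of length $\ge r$ away from $\mathcal V_c$ costs at least $\sqrt{\cdot}$-type energy (via Young's inequality $\Phi(|\dot q|)+\lambda_r\ge C r/T+\lambda_r T$), so the number of transits is bounded. Translating each $q_n$ so that it crosses the ``middle'' $\{\dist(\cdot,\mathcal V_c^-)=\delta\}$ at $t=0$ for the last time before reaching $\mathcal V_c^+$, the limit must spend only finite time away from $\mathcal V_c$, because the energy is finite. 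Hence $\liminf_{t\to\pm\infty}\dist(q(t),\mathcal V_c)=0$, and the choice of normalization forces the correct sides. If needed, this is fixed by a concatenation/cut-off argument: replacing tails by pieces lying in $\mathcal V_c^\pm$ does not increase $J_c$. So $q\in\Gamma_c$ is a minimizer.

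\emph{Step 2: contact times and the Euler--Lagrange equation.} Define $\alpha$ as the last time with $q(\alpha)\in\mathcal V_c^-$ and $\omega$ as the first subsequent time with $q(\omega)\in\mathcal V_c^+$. Both are well defined and satisfy $\alpha<\omega$ by continuity and $\dist(\mathcal V_c^-,\mathcal V_c^+)>0$. On $I=(\alpha,\omega)$ we have $V(q)>c$, so for $\psi\in C_c^\infty(I)$ and small $\varepsilon$ the perturbation $q+\varepsilon\psi$ remains in $\Gamma_c$. Since $\Phi\in C^1$ with $\Phi'(s)=\phi(s)s$, the $\Delta_2$ condition from $(\phi_2)$ lets us differentiate, and we get the weak form
\[
\int\phi(|\dot q|)\dot q\cdot\dot\psi+\nabla V(q)\cdot\psi=0 .
\]
Thus $p=\phi(|\dot q|)\dot q$ is $W^{1,1}_{loc}$ with $\dot p=\nabla V(q)$ continuous, so $p\in C^1(I)$. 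Because $s\mapsto\phi(s)s$ is a strictly increasing homeomorphism by $(\phi_1)$, $\dot q=\Psi(p)$ is continuous, and $q\in C^1(I)$.

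\emph{Step 3: energy identity (main obstacle).} Here I would use Lemma~\ref{l12}, as the paper announces, namely minimality under time rescalings. For $\lambda$ close to $1$, reparametrize $q$ on a subinterval $[a,b]\subset I$ with speed $\lambda$, then shift the tail. Differentiating at $\lambda=1$ gives
\[
\int_a^b\big(\phi(|\dot q|)|\dot q|^2-\Phi(|\dot q|)-(V(q)-c)\big)=0
\]
for all $a<b$ in $I$, hence $G(|\dot q|)-V(q)=-c$ a.e., and everywhere by continuity. Since $V(q)>c$ on $I$, we get $G(|\dot q|)>0$, so $\dot q\ne0$ on $I$.

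On $\{s>0\}$, $s\mapsto\phi(s)s$ is a $C^1$ diffeomorphism with positive derivative, so the inverse map $p\mapsto\dot q$ is $C^1$ away from $p=0$. Hence $\dot q\in C^1(I)$, $q\in C^2(I)$, and \eqref{S} holds classically. Finally, $q\in C^1(\overline I)$ and condition (iii) follow at finite endpoints from the definition of $\alpha,\omega$ and continuity of $p$ and $\nabla V$ up to the boundary. At infinite endpoints they follow from the finiteness of $J_c$ together with $\liminf$ upgraded to $\lim$, since oscillating away costs a fixed amount each time, as in Step 1.
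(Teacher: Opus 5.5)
Your overall plan matches the paper's: minimize $J_c$ on $\Gamma_c$, restrict to the contact interval, derive the weak Euler--Lagrange equation, use time rescaling for the energy, and invert $g(x)=\phi(|x|)x$ away from $0$. Steps~2--3 are sound. Your remark that $g$ is a homeomorphism of all of $\R^N$ even gives $q\in C^1(I)$ directly from $p'=\nabla V(q)$. The energy identity then holds everywhere by continuity and the $C^2$ upgrade is immediate, a slightly cleaner route than Lemma~\ref{l233}.

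The genuine gap is in Step~1, exactly at the point you call delicate. The claim that ``the choice of normalization forces the correct sides'' is false. A condition imposed at $t=0$ does not stop pieces of the sequence from escaping to $\pm\infty$. A bound on the number of transits does not help either, since lingering close to $\mathcal V_c$ costs almost nothing. Nothing in your normalization excludes a sequence $q_n$ that does the following:
\begin{itemize}
\item goes from $\mathcal V_c^-$ to (near) $\mathcal V_c^+$;
\item stays there for a time tending to infinity;
\item comes back to within $\delta/2$ of $\mathcal V_c^-$;
\item then performs the final transit normalized at $t=0$.
\end{itemize}
Its local limit is asymptotic to $\mathcal V_c^+$ as $t\to-\infty$ and need not approach $\mathcal V_c^-$ at all. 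So it is not in $\Gamma_c$, and no cut-off performed on the limit can repair it.

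Such sequences are of course not minimizing. Proving this, however, requires using $J_c(q_n)\to m_c$ quantitatively along the sequence, which is the content of Lemma~\ref{l6}. There one chooses $r_0$ so small that two costs compare correctly.
\begin{itemize}
\item The patch cost $\nu_0=\Phi(r_0)+\mu_{r_0}$ is the cost of a unit-time segment joining a point at distance $r_0$ from $\mathcal V_c^-$ to a suitable point of $\mathcal V_c^-$. That point is chosen as in \eqref{8}, so that the patched curve stays in $\{V\ge c\}$ and belongs to $\Gamma_c$.
\item This patch cost must be less than half the cost $\lambda_{d_{\rho_0}}H(\rho_0/16)$ that Lemma~\ref{l4} assigns to any excursion from distance $\rho_0/16$ to distance $\rho_0/8$ from $\mathcal V_c^-$.
\end{itemize}
Then, for $J_c(q)\le m_c+\nu_0$, the patched curve is a competitor. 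Hence the part of $q$ before its last visit to the $r_0$-neighbourhood of $\mathcal V_c^-$ costs at most $2\nu_0$ and cannot contain such an excursion; the same holds symmetrically at $+\infty$. This gives a uniform confinement for $|t|\ge L_0$ that survives the limit.

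Your statement that replacing tails ``does not increase $J_c$'' misses precisely this point. The patch has a small positive cost, which has to be beaten by a fixed excursion cost, and the comparison must be made on $q_n$, not on the limit.

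Finally, you never verify condition (iv), $J_{c,I}(q)=m_c$. It follows by comparing with the curve frozen at $q(\alpha)$ and $q(\omega)$ outside $I$. That curve lies in $\Gamma_c$ and costs nothing outside $I$, because $V(q(\alpha))=V(q(\omega))=c$ (see Lemma~\ref{l8}).
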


While the variational strategy is inspired by \cite{AMZ}, 
the extension to the present quasilinear framework is far from straightforward. 
In the Newtonian case the kinetic term is quadratic, 
the Lagrangian is smooth and uniformly convex in $\dot q$, 
and minimizers of the constrained functional are automatically classical solutions. 
In particular, conservation of energy follows from standard arguments.

In contrast, under \hyperlink{phi1}{$(\phi_1)$}--\hyperlink{phi2}{$(\phi_2)$} the kinetic term may be 
nonhomogeneous and possibly degenerate or singular at $\dot q=0$. 
Consequently, conservation of energy and classical $C^{2}$ regularity 
for weak minimizers, which are essentially automatic in the semilinear setting, 
must here be recovered through a refined variational analysis.

Regularity for weak solutions of systems such as \eqref{S} is delicate.
In the elliptic setting, $C^{1,\alpha}$ regularity under $\phi$--structure assumptions
was established by Lieberman \cite{Lieberman} in the scalar case, while in the
vectorial case the seminal work of Uhlenbeck \cite{Uhlenbeck} provides
$C^{1,\alpha}$ regularity for homogeneous systems.
However, these results do not directly apply in our situation due to the
nonhomogeneous and vectorial nature of \eqref{S}.

In the quasilinear setting, the prescribed--energy variational scheme 
produces a nontrivial segment which is first obtained as a weak solution. 
From minimality we derive a weak conservation law, which implies that 
the trajectory lies in the region $\{V>c\}$ almost everywhere. 
In this region the equation becomes nondegenerate, which allows us to 
establish classical $C^{2}$ regularity together with the strong energy 
identity $E_q\equiv -c$.

In this way, conservation of energy and classical regularity emerge 
as consequences of the constrained variational structure itself. 
The method thus carries over to a general quasilinear framework and 
selects the prescribed energy level even in the presence of degeneracy 
or singular behavior. As a consequence, the variational construction, 
although performed in the Orlicz--Sobolev setting, produces classical 
$C^2$ trajectories satisfying the strong energy identity $E_q \equiv -c$. 
This makes it possible to classify the resulting connecting orbits 
according to the nature of the contact times with the level set $\{V=c\}$.

As shown in Section \ref{Subsec3.3}, $c$--connections admit natural global 
continuations and therefore generate entire weak solutions of \eqref{S} 
on $\mathbb R$ with constant energy $-c$.
More precisely, if a $c$--connection is defined on an interval 
$I=(\alpha,\omega)$, it can be extended across finite contact times 
by time reflection and, when both contact times are finite, 
by periodic continuation.

The resulting global solutions can be classified according to the nature 
of the contact times with the level set $\{V=c\}$. 
If both contact times are infinite, the solution remains strictly in the region 
$\{V>c\}$ and is asymptotic as $t\to\pm\infty$ to the distinct sets 
$\mathcal V_c^\pm$, yielding a heteroclinic-type solution. 
If exactly one contact time is finite, the corresponding entire solution is asymptotic 
to the same set in both time directions and is of homoclinic type. 
If both contact times are finite, the reflection procedure produces 
a periodic solution oscillating between $\mathcal V_c^-$ and $\mathcal V_c^+$, 
namely a brake-type orbit.

Moreover, if at least one contact time is infinite, then $c$ must be 
a critical value of $V$. In particular, for regular values of $c$ 
the construction produces only brake--type periodic solutions.\medskip

Finally, following the approach of \cite{AMZ}, Section \ref{sec:applications} 
illustrates the abstract variational framework through several classical 
models, including double--well, Duffing--type, and multiple pendulum--type systems.
These examples provide concrete situations in which the assumptions 
\hyperlink{V}{$(\mathcal V_c)$} and \hyperlink{V}{$(\mathcal J_c)$} can be verified explicitly.

In these cases the potential $V$ possesses isolated minima at level $c=0$, 
and there exists a threshold value $c^{*}>0$, determined by the geometry 
of the potential, such that \hyperlink{V}{$(\mathcal V_c)$} and \hyperlink{V}{$(\mathcal J_c)$} hold for all 
$0\le c<c^{*}$. Theorem~\ref{thm:main} therefore yields $c$--connections 
that extend to entire solutions with prescribed energy $-c$ for all 
such values of $c$.

When $c=0$, the solutions are heteroclinic or homoclinic orbits 
connecting minima of $V$. For $0<c<c^{*}$ the resulting trajectories 
may be heteroclinic, homoclinic, or of brake type. In particular, 
whenever $c$ is a regular value of $V$, the construction yields 
brake--type periodic solutions.

In the case of multiple pendulum--type systems, the geometry of the 
sublevel set $\mathcal V_c$ admits different admissible decompositions 
into well--separated subsets. This leads to multiplicity results, yielding geometrically 
distinct connecting orbits at the same prescribed energy level, 
as described in Proposition~\ref{R:3}.

%
%
\section{Preliminaries}\label{sec:preliminaries}

In this section we collect some basic consequences of assumptions \hyperlink{phi1}{$(\phi_1)$} and \hyperlink{phi2}{$(\phi_2)$} on the function $\phi$, and recall the corresponding properties of the associated $N$-function $\Phi$ and of the Orlicz and Orlicz--Sobolev spaces.
Although all the results stated below are classical and can be found, for instance,
in \cite{FukagaiItoNarukawa1,Adams,RaoRen,Krasnoselskii},
we briefly indicate the proofs of the main structural estimates
for the reader's convenience.

We recall that $\phi:(0,+\infty)\to(0,+\infty)$ is a $C^1$ function satisfying assumptions \hyperlink{phi1}{$(\phi_1)$}--\hyperlink{phi2}{$(\phi_2)$}, namely $(\phi(s)s)'>0$ for all $s>0$ and
\[
l-1 \le \frac{(\phi(s)s)'}{\phi(s)} \le m-1,
\quad \forall\, s>0,
\]
for some $1<l\le m$.

Define $\Phi(s)=\int_0^{|s|}\phi(r)r\,dr$ for $s\in\mathbb R$.
Then $\Phi$ is even, convex, $\Phi(0)=0$, $\Phi(s)>0$ for $s\neq0$, and
$\Phi'(s)=\phi(|s|)s$ for $s\neq0$.
From \hyperlink{phi1}{$(\phi_1)$}--\hyperlink{phi2}{$(\phi_2)$} one deduces
\[
\lim_{s\to0^+}\frac{\Phi(s)}{s}=0,\qquad
\lim_{s\to+\infty}\frac{\Phi(s)}{s}=+\infty,
\]
hence $\Phi$ is an $N$-function. Indeed, set 
\[a(s)=\phi(s)s\text{ for }s>0\text{ and }a(0):=\lim_{s\to0^+}\phi(s)s.\]
Note that $a(0)\in \mathbb R$ since, by \hyperlink{phi1}{$(\phi_1)$}, the function $a$ is nondecreasing on $(0,+\infty)$. Hence  $a\in C([0,+\infty))\cap C^1(0,+\infty)$ and, by \hyperlink{phi2}{$(\phi_2)$}, it satisfies $(l-1)a(t)\le a'(t)t$ for all $t>0$.
Integrating this on $[0,s]$ gives $l\int_0^s a(t)\,dt\le a(s)s$ for all $s>0$.
Since $l>1$, this forces $a(0)=0$ and thus $\Phi(s)/s\to0$ as $s\to0^+$.
Moreover, since $a'(t)/a(t)\ge (l-1)/t$ for $t>0$,
integrating on $[1,s]$ yields $a(s)\ge a(1)s^{\,l-1}$ for $s>1$,
which implies $\Phi(s)/s\to+\infty$ as $s\to+\infty$.

Under assumptions \hyperlink{phi1}{$(\phi_1)$}--\hyperlink{phi2}{$(\phi_2)$}, $\Phi$ satisfies the two-sided growth condition
\begin{equation}\label{eq:Phi-growth}
l \le \frac{\phi(s)s^2}{\Phi(s)} \le m,
\qquad \forall\, s>0.
\end{equation}
Indeed \hyperlink{phi2}{$(\phi_2)$} gives $(l-1)a(t) \le a'(t)t \le (m-1)a(t)$ for $t>0$. Then, integrating on $[0,s]$, we obtain $l\Phi(s) \le a(s)s \le m
\Phi(s)$ and \eqref{eq:Phi-growth} follows.

As shown in \cite[Lemma 2.1]{FukagaiItoNarukawa1}, this implies the homogeneity-type estimate
\begin{equation}\label{eq:omogeneity}
\min\{t^l,t^m\}\,\Phi(s) \le \Phi(st) \le \max\{t^l,t^m\}\,\Phi(s),
\qquad \forall\, s,t\ge0.
\end{equation}
Indeed  for any $s>0$ and $\tau>0$, $\frac{d}{d\tau}\Phi(s\tau)=a(s\tau)s$
and from \eqref{eq:Phi-growth},
$\frac{l}{\tau}\le \frac{d}{d\tau}\log\Phi(s\tau)\le\frac{m}{\tau}$.
Integrating with respect to $\tau$ from $1$ to $t$ (or $t$ to $1$) gives
$\log t^l\le\log\frac{\Phi(st)}{\Phi(s)}\le\log t^m$ for $t\ge1$,
$\log t^m\le\log\frac{\Phi(st)}{\Phi(s)}\le\log t^l$ for $0<t\le1$.
Exponentiating yields \eqref{eq:omogeneity}.

From \eqref{eq:omogeneity} with $t=2$ we obtain
$\Phi(2s)\le 2^m\Phi(s)$ for all $s\ge0$. Hence, $\Phi$ satisfies the $\Delta_2$-condition with $\lambda=2^{m}$, which is a standard structural assumption in the Orlicz space framework.

Some further elementary estimates will be useful later.

 First note that, since $\phi(s)s\to0$ as $s\to0^+$, there exists a constant $C_0>0$ such that $
\phi(s)s\le C_0$ for all $s\in[0,1]$.
Combined with \eqref{eq:Phi-growth} this yields
\begin{equation}\label{eq:phi-bound}
\phi(s)s \le m\,\Phi(s)+C_0,\qquad \forall\, s\ge0.
\end{equation}

Second, we have
\begin{equation}\label{eq:convexity}
\Phi(|a\xi_{1}+b\xi_{2}|)
\le
2^{m-1}(\Phi(|\xi_{1}|)+\Phi(|\xi_{2}|))
\end{equation}
for any $0<a,b\le 1$ and $\xi_{1},\xi_{2}\in\mathbb R^{N}$. Indeed note that the map $z\in\mathbb R^{N}\mapsto\Phi(|z|)$ is convex. Hence, for any $a,b>0$ and $\xi_{1},\xi_{2}\in\mathbb R^{N}$
\[
\Phi(|a\xi_{1}+b\xi_{2}|)
=
\Phi(|\tfrac12(2a\xi_{1})+\tfrac12(2b\xi_{2})|)
\le
\tfrac12\Phi(2a|\xi_{1}|)
+
\tfrac12\Phi(2b\,|\xi_{2}|).
\]
Since $0<a,b\le 1$, by \eqref{eq:omogeneity} we have
$
\Phi(2a|\xi_{1}|)\le 2^{m}\Phi(|\xi_{1}|),
$ and $\Phi(2b\,|\xi_{2}|)\le 2^{m}\Phi(|\xi_{2}|)$, \eqref{eq:convexity} follows.

We now briefly recall the main properties of the associated Orlicz and Orlicz--Sobolev spaces that will be used in the sequel.

Let $I\subset\mathbb{R}$ be an interval.
The Orlicz space associated with $\Phi$ is defined by
\[
L^\Phi(I;\mathbb{R}^N)=
\Bigl\{
q\in L^1_{\mathrm{loc}}(I;\mathbb{R}^N):
\int_I \Phi(|q|/\lambda)\,dt<\infty
\text{ for some }\lambda>0
\Bigr\},
\]
endowed with the Luxemburg norm
\[
\|q\|_{L^\Phi(I;\mathbb{R}^N)}
=
\inf\Bigl\{
\lambda>0:
\int_I \Phi(|q|/\lambda)\,dt\le1
\Bigr\}.
\]
Since $\Phi$ satisfies the $\Delta_2$-condition,
\[
q\in L^\Phi(I;\mathbb{R}^N)
\;\Longleftrightarrow\;
\int_I \Phi(|q|)\,dt<\infty,
\]
and $\|q\|_{L^\Phi(I;\mathbb{R}^N)}<\infty$ if and only if the integral is finite.

The corresponding Orlicz--Sobolev space is
\[
W^{1,\Phi}(I;\mathbb{R}^N) = \Big\{q \in L^\Phi(I;\mathbb{R}^N) : \dot q \in L^\Phi(I;\mathbb{R}^N)\Big\},
\]
with the norm
\[
\|q\|_{W^{1,\Phi}(I;\mathbb{R}^N)} = \| q \|_{L^\Phi(I;\mathbb{R}^N)}+\|\dot q \|_{L^\Phi(I;\mathbb{R}^N)}.
\]
The $\Delta_{2}$-condition ensures that both the Orlicz space $L^\Phi(I;\mathbb{R}^N)$ and the Orlicz--Sobolev space $W^{1,\Phi}(I;\mathbb{R}^N)$ are Banach, separable, and reflexive spaces.

From \eqref{eq:omogeneity} and the superlinear growth of $\Phi$ at $+\infty$, one deduces that there exists $C>0$ such that $\Phi(s)+C\ge |s|$. Hence on bounded intervals we have continuous embeddings
\[
L^\Phi(I;\mathbb{R}^N)\hookrightarrow L^1(I;\mathbb{R}^N),\quad
W^{1,\Phi}(I;\mathbb{R}^N)\hookrightarrow W^{1,1}(I;\mathbb{R}^N).
\]

We conclude this section by highlighting a structural property of the vector field associated with $\phi$, which will play a crucial role in the regularity analysis.

\begin{lemma}\label{lem:ginvC1}
Assume \hyperlink{phi1}{$(\phi_1)$} and \hyperlink{phi2}{$(\phi_2)$}. 
Define $g:\mathbb{R}^N\to\mathbb{R}^N$ by $g(x)=\phi(|x|)x$ for $x\neq 0$ and $g(0)=0$.
Then $g\in C(\mathbb{R}^N;\mathbb R^{N})\cap C^1(\mathbb{R}^N\setminus\{0\};\mathbb{R}^N)$,
$g(\mathbb{R}^N\setminus\{0\})=\mathbb{R}^N\setminus\{0\}$, and $g$ is locally invertible on
$\mathbb{R}^N\setminus\{0\}$. In particular, the inverse map
$g^{-1}:\mathbb{R}^N\setminus\{0\}\to\mathbb{R}^N\setminus\{0\}$ is of class $C^1$
on any compact subset of $\mathbb{R}^N\setminus\{0\}$.
\end{lemma}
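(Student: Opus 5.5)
We prove the lemma in four steps: continuity of $g$, its regularity off the origin, bijectivity of $g$ on $\mathbb{R}^N\setminus\{0\}$, and invertibility of $Dg(x)$ for $x\neq0$. The $C^1$ regularity of the inverse then follows from the inverse function theorem.

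\textbf{Continuity and regularity.} Write $g(x)=a(|x|)\,\frac{x}{|x|}$ for $x\neq0$, with $a(s)=\phi(s)s$ as in Section~\ref{sec:preliminaries}. On $\mathbb{R}^N\setminus\{0\}$ the maps $x\mapsto|x|$ and $x\mapsto x/|x|$ are smooth, and $\phi\in C^1(0,+\infty)$, so $g\in C^1(\mathbb{R}^N\setminus\{0\};\mathbb{R}^N)$. At the origin, $|g(x)|=a(|x|)\to a(0)=0$, which was shown in Section~\ref{sec:preliminaries}. Hence $g$ is continuous on $\mathbb{R}^N$.

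\textbf{Bijectivity onto $\mathbb{R}^N\setminus\{0\}$.} By $(\phi_1)$, $a$ is strictly increasing on $[0,+\infty)$. We have $a(0)=0$, and $a(s)\ge a(1)s^{l-1}\to+\infty$ as $s\to+\infty$. So $a:[0,+\infty)\to[0,+\infty)$ is a homeomorphism, and it is a $C^1$ diffeomorphism of $(0,+\infty)$ because $a'>0$ there. Since $g$ preserves directions and acts on radii by $a$, it maps $\mathbb{R}^N\setminus\{0\}$ bijectively onto itself. The inverse is explicit:
\[
g^{-1}(y)=a^{-1}(|y|)\,\frac{y}{|y|},\qquad y\neq0.
\]

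\textbf{Invertibility of $Dg(x)$ for $x\neq0$.} A direct computation gives
\[
Dg(x)=\phi(|x|)\,\mathrm{Id}+\phi'(|x|)\,|x|\;\frac{x\otimes x}{|x|^2}.
\]
On $x^\perp$ this matrix has eigenvalue $\phi(|x|)>0$. In the direction $x$ it has eigenvalue $\phi(|x|)+\phi'(|x|)|x|=a'(|x|)>0$. Therefore $Dg(x)$ is invertible, with
\[
\det Dg(x)=\phi(|x|)^{N-1}a'(|x|)\neq0.
\]
By the inverse function theorem, $g$ is a local $C^1$ diffeomorphism on $\mathbb{R}^N\setminus\{0\}$. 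Combined with global injectivity, $g^{-1}$ is $C^1$ on all of $\mathbb{R}^N\setminus\{0\}$, in particular on every compact subset of it. The same conclusion can be read off the explicit formula for $g^{-1}$, since $a^{-1}\in C^1(0,+\infty)$.

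\textbf{Main obstacle.} The only delicate point is that $\phi$ may be singular or degenerate at $0$, so $g$ need not be differentiable at the origin. This is why the statement is restricted to $\mathbb{R}^N\setminus\{0\}$ and to compact subsets of it, where $\phi$, $\phi'$ and $a'$ are continuous and $\phi$, $a'$ are bounded away from $0$. Hypothesis $(\phi_2)$ enters only through $a(0)=0$ and the growth $a(s)\to\infty$, which together give surjectivity.
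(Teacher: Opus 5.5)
Your proof is correct and follows essentially the same route as the paper: continuity at the origin from $a(0)=0$, surjectivity from the radial structure $g(r\omega)=a(r)\omega$, and invertibility of $Dg(x)$ via $\det Dg(x)=\phi(|x|)^{N-1}\bigl(\phi(|x|)+\phi'(|x|)|x|\bigr)>0$. The differences are minor: you get the determinant from the eigenvalues on $x^\perp$ and along $x$, while the paper uses the rank-one determinant formula; and your explicit injectivity argument with the formula $g^{-1}(y)=a^{-1}(|y|)\,y/|y|$ makes precise a point the paper leaves implicit (that a global inverse exists), giving $g^{-1}\in C^1(\mathbb{R}^N\setminus\{0\};\mathbb{R}^N)$ directly.
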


\begin{proof}
Since $\phi(s)s\to0$ as $s\to0^+$, we have
$|g(x)|=\phi(|x|)|x|\to0$ as $x\to0$.
Hence $g$ extends continuously at $0$, and therefore
$g\in C(\mathbb{R}^N;\mathbb{R}^N)$.

To show that $g(\mathbb{R}^N\setminus\{0\})=\mathbb{R}^N\setminus\{0\}$, let $a(r)=\phi(r)r$ for $r>0$. By \hyperlink{phi1}{$(\phi_1)$}, $a'(r)=(\phi(r)r)'>0$,
hence $a$ is strictly increasing. Moreover, by \hyperlink{phi2}{$(\phi_2)$} one has
$\lim_{r\to0^+}a(r)=0$ and $\lim_{r\to+\infty}a(r)=+\infty$, so $a$ is a continuous
bijection $(0,+\infty)\to(0,+\infty)$. Writing $x=r\omega$ with $r>0$, $|\omega|=1$, we have
$g(x)=a(r)\omega$, which implies $g(\mathbb{R}^N\setminus\{0\})=\mathbb{R}^N\setminus\{0\}$.

For $x\neq0$ we have
\[
\frac{\partial g_i}{\partial x_j}(x)
=
\phi(|x|)\delta_{ij}
+
\phi'(|x|)\frac{x_i x_j}{|x|},
\]
and hence, in matrix form,
\[
Dg(x)=\phi(|x|)I+\phi'(|x|)\frac{x\otimes x}{|x|},
\qquad
x\otimes x:=(x_i x_j)_{1\le i,j\le N}.
\]
Then $g\in C^{1}(\mathbb R^{N}\setminus\{0\};\mathbb\R^{N})$. Moreover, using the assumptions \hyperlink{phi1}{$(\phi_1)$} and \hyperlink{phi2}{$(\phi_2)$} and the rank-one perturbation formula
$\det(A+uv^T)=\det(A)(1+v^TA^{-1}u)$ with
$A=\phi(|x|)I$ and $uv^T=\phi'(|x|)\frac{x\otimes x}{|x|}$, we obtain
\[
\det(Dg(x))
=
\phi(|x|)^{\,N-1}(\phi(|x|)+\phi'(|x|)|x|)
>0
\qquad \text{for all } x\neq 0.
\]
Therefore, $g$ is a local diffeomorphism on
$\mathbb{R}^N\setminus\{0\}$.
Since $g(\mathbb{R}^N\setminus\{0\})=\mathbb{R}^N\setminus\{0\}$,
its inverse $g^{-1}$ is of class $C^1$
on any compact subset of $\mathbb{R}^N\setminus\{0\}$.
\end{proof}

%
%
\section{Minimal 
$c$--connections and their global extensions
}\label{Section3}

In this section we develop the variational theory leading to the existence of solutions for the quasilinear system \eqref{S}, which connect the two sublevel sets $\mathcal V_c^-$ and $\mathcal V_c^+$ at a prescribed energy level $-c$. Such solutions will be referred to as $c$--connections.

The analysis is carried out within the abstract framework introduced in the previous sections. Throughout this section we assume that the structural conditions \hyperlink{phi1}{$(\phi_1)$}, \hyperlink{phi2}{$(\phi_2)$}, \hyperlink{V}{$(\mathcal V_c)$}, and \hyperlink{J}{$(\mathcal J_c)$} are satisfied. 

We also study qualitative properties of such trajectories. In particular we describe the structure of the contact times with the sublevel sets, prove that the solutions have constant energy $-c$, and that they are classical $C^2$ solutions of \eqref{S} away from the contact points.

%
%
\subsection{Minimization problem on $\Gamma_{c}$}
Given $c\in\mathbb{R}$ and an interval $I\subset\mathbb{R}$, we define the action functional
\[
J_{c,I}(q)=\int_{I}\bigl(\Phi(|\dot{q}(t)|)+V(q(t))-c\bigr)dt
\]
on the class
\[
\mathcal{X}_c=\bigl\{ q \in W^{1,\Phi}_{\text{loc}}(\mathbb{R};\mathbb{R}^N) : \inf_{t\in\mathbb{R}} V(q(t)) \ge c \bigr\}.
\]
When $I=\mathbb{R}$ we write $J_c$ instead of $J_{c,\mathbb{R}}$.

Functions in $W^{1,\Phi}_{\text{loc}}(\mathbb{R};\mathbb{R}^N)$ are absolutely continuous, due to the continuous embedding
\[
W^{1,\Phi}_{\text{loc}}(I;\mathbb{R}^N)\hookrightarrow W^{1,1}_{\text{loc}}(I;\mathbb{R}^N)
\]
(see Section~\ref{sec:preliminaries}). 
If $(q_n)\subset W^{1,\Phi}_{\text{loc}}(\mathbb{R};\mathbb{R}^N)$ and $q\in W^{1,\Phi}_{\text{loc}}(\mathbb{R};\mathbb{R}^N)$, we write 
$q_n\rightharpoonup q$ in $W^{1,\Phi}_{\text{loc}}(\mathbb{R};\mathbb{R}^N)$ to mean that $q_n$ converges weakly to $q$ in $W^{1,\Phi}(I;\mathbb{R}^N)$ for every compact interval $I\subset\mathbb{R}$.

The next lemma shows that $J_{c,I}$ is nonnegative on $\mathcal{X}_c$ and has useful semicontinuity properties.

%
%

\begin{lemma}\label{l1}
	For any $c\in\mathbb{R}$ and $I\subset\mathbb{R}$, we have $J_{c,I}(q)\geq 0$ for all $q\in\mathcal{X}_c$. 
	Moreover, if $(q_n)\subset\mathcal{X}_c$ with $q_n\rightharpoonup q$ in $W^{1, \Phi}_{\mathrm{loc}}(\mathbb{R};\mathbb{R}^N)$ for some $q\in W^{1, \Phi}_{\mathrm{loc}}(\mathbb{R};\mathbb{R}^N)$, then 
	$$
	J_{c,I}(q)\leq \liminf_{n\to\infty} J_{c,I}(q_n).
	$$
\end{lemma}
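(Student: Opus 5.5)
Nonnegativity is immediate. For $q\in\mathcal X_c$ we have $V(q(t))\ge c$ for every $t$, and $\Phi\ge0$, so the integrand $\Phi(|\dot q(t)|)+V(q(t))-c$ is a nonnegative measurable function on $I$. Hence $J_{c,I}(q)\in[0,+\infty]$ is well defined and nonnegative.

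For the lower semicontinuity we first reduce to compact intervals. Since the integrand is nonnegative for each $q_n$ and, as shown below, for $q$ as well, monotone convergence gives
\[
J_{c,I}(q)=\sup_{K} J_{c,K}(q)
\]
over compact intervals $K\subset I$, with $J_{c,K}(q_n)\le J_{c,I}(q_n)$. It therefore suffices to prove $J_{c,K}(q)\le\liminf_n J_{c,K}(q_n)$ for each compact interval $K$, and then take the supremum over $K$. Fix a compact interval $K\supset I\cap K$; we may pass freely to compact intervals containing $K$.

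On $K$ we have $q_n\rightharpoonup q$ in $W^{1,\Phi}(K;\R^N)$. Weakly convergent sequences are bounded, so $(q_n)$ is bounded in $W^{1,\Phi}(K)$, hence in $W^{1,1}(K)$ by the embedding recalled in Section~\ref{sec:preliminaries}. Therefore $(q_n)$ is uniformly bounded and equi-absolutely continuous. To see the latter, note that $\int_K\Phi(|\dot q_n|)$ is bounded by $\Delta_2$ and the norm bound, and that $\Phi$ is superlinear; a de la Vallée Poussin argument then gives equi-integrability of $\dot q_n$. By Ascoli–Arzelà, a subsequence converges uniformly on $K$ to some limit, which must be $q$ because weak convergence in $W^{1,\Phi}(K)$ implies weak convergence in $L^1(K)$. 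As every subsequence has a further subsequence with the same limit, $q_n\to q$ uniformly on $K$ for the whole sequence. Two consequences follow. First, $V(q_n)\to V(q)$ uniformly on $K$, since $V$ is continuous on a compact set containing all the ranges. Second, $V(q(t))\ge c$ for all $t$, so $q\in\mathcal X_c$, which justifies the nonnegativity of the integrand for $q$ used in the reduction above. In particular $\int_{K}(V(q_n)-c)\to\int_K(V(q)-c)$.

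It remains to treat the kinetic part. The functional $u\mapsto\int_K\Phi(|u|)\,dt$ is convex on $L^\Phi(K;\R^N)$, because $z\mapsto\Phi(|z|)$ is convex. It is also strongly lower semicontinuous: if $u_n\to u$ in $L^\Phi$, a subsequence converges a.e., and Fatou's lemma applies. By Mazur's lemma a convex, strongly lower semicontinuous functional is weakly lower semicontinuous. Since $\dot q_n\rightharpoonup\dot q$ in $L^\Phi(K)$, this yields $\int_K\Phi(|\dot q|)\le\liminf_n\int_K\Phi(|\dot q_n|)$. Adding this to the convergent potential term gives the inequality on $K$ and completes the proof. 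The main technical point is the uniform convergence on compact sets, in particular the equi-continuity and the identification of the limit. This is handled by the boundedness in $W^{1,1}$ together with equi-integrability of $\dot q_n$, which follows from the superlinearity of $\Phi$.
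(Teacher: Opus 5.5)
Your proof is correct and follows essentially the same route as the paper: nonnegativity from $V(q)\ge c$, reduction to bounded intervals via a supremum over an exhausting family, uniform convergence on compact intervals for the potential term, and weak lower semicontinuity of the convex kinetic functional. You also supply two details the paper glosses over. The first is equi-integrability of $\dot q_n$, which you obtain from the superlinearity of $\Phi$; boundedness in $W^{1,1}$ alone would not give equicontinuity. The second is the fact that $q\in\mathcal{X}_c$, which is what justifies writing $J_{c,I}(q)$ as a supremum. Finally, the sentence ``Fix a compact interval $K\supset I\cap K$'' is garbled and should simply read ``fix a compact interval $K\subset I$''.
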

\begin{proof}
Since $\Phi(|\dot q(t)|)\ge 0$ and $V(q(t))\ge c$ for all $t\in\mathbb{R}$ 
for every $q\in\mathcal{X}_c$, we plainly have
	\[
	J_{c,I}(q)=\int_{I}\big(\Phi(|\dot q|)+V(q)-c\big)\,dt \;\geq\; 0, \qquad \forall\, q\in\mathcal{X}_c.
	\]
	Now let $(q_n)\subset\mathcal{X}_c$ and $q\in W^{1, \Phi}_{\mathrm{loc}}(\mathbb{R};\mathbb{R}^N)$ be such that $q_n\rightharpoonup q$ in $W^{1, \Phi}_{\mathrm{loc}}(\mathbb{R};\mathbb{R}^N)$.  
If $I\subset\mathbb{R}$ is a bounded interval, $q_n\rightharpoonup q$ in $W^{1, \Phi}(I;\mathbb{R}^N)$. Then, $(q_{n})$ is bounded in $W^{1,\Phi}(I;\mathbb{R}^N)$
and since $W^{1,\Phi}(I;\mathbb{R}^N)\hookrightarrow W^{1,1}(I;\mathbb{R}^N)$ continuously, the sequence is equicontinuous on $I$.  Hence $q_n\to q$ uniformly on $I$ and since $V$ is continuous 
\[
\int_{I}(V(q)-c)\,dt = \lim_{n\to\infty}\int_{I}(V(q_n)-c)\,dt.
\]
Moreover, since the functional 
$
v \mapsto \int_{I}\Phi(|\dot v|)\,dt
$
is convex and continuous on $W^{1,\Phi}(I;\mathbb{R}^N)$, it is weakly lower semicontinuous on $W^{1,\Phi}(I;\mathbb{R}^N)$. Hence
\[
\int_{I}\Phi(|\dot q|)\,dt \;\leq\; \liminf_{n\to\infty}\int_{I}\Phi(|\dot q_n|)\,dt.
\]
Then 
$J_{c,I}(q)\leq \liminf_{n\to\infty} J_{c,I}(q_n)$ for any bounded interval $I\subset\mathbb R$. 
If $I$ is not bounded, set $I_R:= I\cap(-R,R)$ for $R>0$. Since the integrand in $J_{c,I}$ is nonnegative, we have
\[
J_{c,I}(q)=\sup_{R>0}J_{c,I_R}(q), \qquad 
J_{c,I}(q_n)=\sup_{R>0}J_{c,I_R}(q_n).
\]
Therefore, taking the supremum over $R>0$ and using $\sup_R\liminf_n \le \liminf_n\sup_R$, we obtain
\[
J_{c,I}(q)=\sup_{R>0}J_{c,I_R}(q)\le \liminf_{n\to\infty}\sup_{R>0}J_{c,I_R}(q_n)=\liminf_{n\to\infty}J_{c,I}(q_n),
\]
and the lemma follows.
\end{proof}

We look for connecting solutions of \eqref{S} by minimizing $J_c$ on $\Gamma_c$, and we set
\[
m_c = \inf_{q\in\Gamma_c} J_c(q).
\]
To get compactness in the minimization process, it is useful to recall an important coercivity inequalities already used in \cite{AlvesIsneriMontecchiari1}. 

\begin{lemma}\label{l4}
Let $q\in\mathcal{X}_c$ and $(\sigma,\tau)\subset\mathbb{R}$. If there exists $d>0$ such that $V(q(t))-c\geq d$ for all $t\in(\sigma,\tau)$, then 
	\begin{equation*}
		\begin{split}
			J_{c,(\sigma,\tau)}(q)&\geq \dfrac{\tau-\sigma}{\xi_1(\tau-\sigma)}\Phi(|q(\tau)-q(\sigma)|)+d(\tau-\sigma)\\& \geq \lambda_d H\left(|q(\tau)-q(\sigma)|\right),
		\end{split}
	\end{equation*} 
	where $\lambda_d := \min\{ d^{(l-1)/l}, d^{(m-1)/m} \}$, $H(s)=\min\left\{\Phi(s)^{\frac{1}{l}},\Phi(s)^{\frac{1}{m}}\right\}$ and $\xi_1(s)=\max\{s^l, s^m\}$. 
\end{lemma}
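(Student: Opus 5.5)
The plan is to handle the two terms of $J_{c,(\sigma,\tau)}(q)$ separately: the kinetic part by Jensen's inequality, the potential part by the pointwise bound $V(q)-c\ge d$. Then we minimize the resulting lower bound over the interval length.

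Set $T=\tau-\sigma$ and $\Delta=|q(\tau)-q(\sigma)|$. Since $z\mapsto\Phi(|z|)$ is convex and $q$ is absolutely continuous, Jensen's inequality gives
\[
\int_\sigma^\tau\Phi(|\dot q|)\,dt\ \ge\ T\,\Phi\Big(\Big|\frac1T\int_\sigma^\tau\dot q\,dt\Big|\Big)=T\,\Phi(\Delta/T).
\]
Apply \eqref{eq:omogeneity} with $s=\Delta/T$ and $t=T$. This gives $\Phi(\Delta)\le\max\{T^l,T^m\}\Phi(\Delta/T)=\xi_1(T)\Phi(\Delta/T)$, so $\Phi(\Delta/T)\ge\Phi(\Delta)/\xi_1(T)$. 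Adding $\int_\sigma^\tau(V(q)-c)\,dt\ge dT$ yields the first inequality.

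For the second inequality, we bound $f(T)=\frac{T}{\xi_1(T)}\Phi(\Delta)+dT$ from below uniformly in $T>0$, and split into two cases.

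If $T\le1$, then $\xi_1(T)=T^l$, so $f(T)=T^{1-l}\Phi(\Delta)+dT$. We use the weighted AM--GM (Young) inequality with weights $1/l$ and $(l-1)/l$:
\[
\tfrac1l\big(lT^{1-l}\Phi\big)+\tfrac{l-1}{l}\Big(\tfrac{l}{l-1}dT\Big)\ge (lT^{1-l}\Phi)^{1/l}\Big(\tfrac{l}{l-1}dT\Big)^{(l-1)/l}.
\]
A crude version suffices: either $T^{1-l}\Phi\ge \Phi^{1/l}d^{(l-1)/l}$ or $dT\ge\Phi^{1/l}d^{(l-1)/l}$. Indeed, the product $(T^{1-l}\Phi)^{1/l}(dT)^{(l-1)/l}=\Phi^{1/l}d^{(l-1)/l}$ is independent of $T$, so the larger of the two factors is at least this geometric mean. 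Hence $f(T)\ge d^{(l-1)/l}\Phi(\Delta)^{1/l}$.

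If $T\ge1$, then $\xi_1(T)=T^m$, and the same argument with exponent $m$ gives $f(T)\ge d^{(m-1)/m}\Phi(\Delta)^{1/m}$.

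In both cases $f(T)\ge\min\{d^{(l-1)/l},d^{(m-1)/m}\}\cdot\min\{\Phi^{1/l},\Phi^{1/m}\}=\lambda_dH(\Delta)$.

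The main point to check is this geometric-mean step: the product of the two terms must be exactly independent of $T$. It is, because $(T^{1-l})^{1/l}T^{(l-1)/l}=1$. The vector-valued Jensen inequality is standard.
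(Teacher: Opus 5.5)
Your proof is correct and is essentially the paper's argument: Jensen's inequality combined with the homogeneity estimate \eqref{eq:omogeneity} for the kinetic term, the bound $V(q)-c\ge d$ for the potential term, and then the split $\tau-\sigma\le 1$ versus $\tau-\sigma\ge 1$ (so that $\xi_1(\tau-\sigma)=(\tau-\sigma)^p$ with $p=l$ or $p=m$), followed by a Young/weighted AM--GM step. The only differences are cosmetic: you apply \eqref{eq:omogeneity} to the averaged displacement after Jensen rather than to $(\tau-\sigma)|\dot q|$ inside the integral, and your observation that the larger of the two terms dominates their $T$-independent weighted geometric mean is a cruder form of the paper's Young inequality that still yields exactly the constant $d^{(p-1)/p}$ appearing in $\lambda_d$.
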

\begin{proof}
The argument follows the approach in \cite[Lemma 3.2]{AlvesIsneriMontecchiari1}, and we only briefly outline it. Let $q \in \mathcal{X}_c$ satisfy $V(q(t)) - c \ge d$ for all $t \in (\sigma, \tau)$.  
By the monotonicity and convexity of $\Phi$ and Jensen's inequality
\[
\Phi\big(|\int_\sigma^\tau \dot{q}(t)\, dt|\big)\le \Phi\big(\int_\sigma^\tau |\dot{q}(t)|\, dt\big) \le \tfrac{1}{\tau-\sigma} \int_\sigma^\tau \Phi((\tau-\sigma)|\dot{q}(t)|)\, dt.
\]  
Using the estimate \eqref{eq:omogeneity}, denoting $\xi_1(\tau-\sigma)=\max\{(\tau-\sigma)^{l}, (\tau-\sigma)^{m}\}$, this yields that
\[
\Phi(|q(\tau)-q(\sigma)|) \le \tfrac{\xi_1(\tau-\sigma)}{\tau-\sigma} \int_\sigma^\tau \Phi(|\dot{q}(t)|)\, dt.
\]  
Hence
\[
J_{c,(\sigma,\tau)}(q) \ge \frac{\tau-\sigma}{\xi_1(\tau-\sigma)} \Phi(|q(\tau)-q(\sigma)|) + d(\tau-\sigma).
\]  
Since $\xi_1(\tau-\sigma) = (\tau-\sigma)^p$ for a $p \in \{l,m\}$ (depending on whether $\tau-\sigma \le 1$ or 
$\tau-\sigma \ge 1$), applying Young's inequality with conjugate exponents $p$ and $p/(p-1)$, we obtain
\[
J_{c,(\sigma,\tau)}(q) \ge d^{(p-1)/p} \, \Phi(|q(\tau)-q(\sigma)|)^{1/p}.
\]  
Taking the minimum of the right hand side with respect to  $p \in \{l,m\}$, 
we obtain
\[
J_{c,(\sigma,\tau)}(q) \ge \min\{ d^{(l-1)/l}, d^{(m-1)/m} \}\, \min\left\{\Phi(|q(\tau)-q(\sigma)|)^{\frac{1}{l}},\Phi(|q(\tau)-q(\sigma)|)^{\frac{1}{m}}\right\},
\] 
completing the proof.
\end{proof}

Lemma \ref{l4} provides the basic coercivity estimate needed for compactness. 
It shows that whenever the potential stays above level $c$ on an interval, the action grows both with the length of the interval and with the endpoint displacement. 
This dual control prevents trajectories of finite action from spreading indefinitely in time or in space.
%

\begin{remark}\label{remark1}
We denote by $\rho_0$ the distance between $\mathcal{V}_c^-$ and $\mathcal{V}_c^+$:  
\[
\rho_0 = \operatorname{dist}(\mathcal{V}_c^-, \mathcal{V}_c^+).
\] 
By \hyperlink{V}{$(\mathcal{V}_c)$}, we have $\rho_0>0$. Moreover, for given $r>0$ and $C>0$, the set
\[
K_{r,C} = \bigl\{x\in\mathbb{R}^N : |x|\leq C \text{ and } \operatorname{dist}(x,\mathcal{V}_c)\geq r \bigr\}
\]
is compact and, due to the continuity of $V$, there exists $d=d(r,C)>0$ such that 
\[
\inf_{x\in K_{r,C}} V(x) \geq c + d.
\]
\end{remark}

A first consequence of Lemma \ref{l4} is that the minimal level $m_{c}$  cannot vanish. 

\begin{lemma}\label{l3}
$m_c \in (0,+\infty)$.
\end{lemma}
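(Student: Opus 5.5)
We need to show two things: that $m_c<+\infty$, by exhibiting an admissible path of finite action, and that $m_c>0$, by a uniform lower bound obtained from Lemma~\ref{l4}.

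\emph{Finiteness.} Pick $x^-\in\mathcal V_c^-$ and $x^+\in\mathcal V_c^+$. The segment between them may pass through $\{V<c\}$, so we cannot use it directly. Instead we use (\hyperlink{J}{$\mathcal J_c$}) only implicitly and build an explicit path. By (\hyperlink{V}{$\mathcal V_c$}), $\mathcal V_c=\{V\le c\}$, so $V>c$ off $\mathcal V_c^-\cup\mathcal V_c^+$. Define
\[
s_0=\sup\{s\in[0,1]: x^-+s(x^+-x^-)\in\mathcal V_c^-\}, \qquad s_1=\inf\{s\in[s_0,1]: x^-+s(x^+-x^-)\in\mathcal V_c^+\}.
\]
These give a last point $y^-\in\mathcal V_c^-$ and a next point $y^+\in\mathcal V_c^+$ on the segment. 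Between $s_0$ and $s_1$ the segment meets neither set (by closedness and the choice of $s_0,s_1$), so $V>c$ there. Let $q(t)=y^-$ for $t\le0$, $q(t)=y^-+t(y^+-y^-)$ for $t\in[0,1]$, and $q(t)=y^+$ for $t\ge1$. Then $V(q)\ge c$ everywhere, $q\in\Gamma_c$, and $J_c(q)=\int_0^1(\Phi(|y^+-y^-|)+V(q)-c)\,dt<\infty$, since $V(y^\pm)=c$ is forced by continuity at a boundary contact. Hence $m_c<+\infty$.

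\emph{Positivity.} This is the main step. Take any $q\in\Gamma_c$. By admissibility and continuity there are times $\sigma<\tau$ with $\dist(q(\sigma),\mathcal V_c^-)\le\rho_0/4$ and $\dist(q(\tau),\mathcal V_c^+)\le\rho_0/4$, so $|q(\tau)-q(\sigma)|\ge\rho_0/2$.

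Choose $\sigma$ as the last time in $[\sigma,\tau]$ with $\dist(q,\mathcal V_c^-)\le\rho_0/4$, and then $\tau'$ as the first subsequent time with $\dist(q,\mathcal V_c^+)\le\rho_0/4$. On $(\sigma,\tau')$ we then have $\dist(q,\mathcal V_c)\ge\rho_0/4$. Indeed, for a point at distance $<\rho_0/4$ from $\mathcal V_c$, the separation of the two sets forces it to be within $\rho_0/4$ of exactly one of them, which is excluded on this interval.

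Lemma~\ref{l4} needs a uniform positive lower bound $V-c\ge d$, which requires boundedness. The difficulty is that $q$ need not be bounded, and $V-c$ need not be bounded below away from $\mathcal V_c$ at infinity. To handle this, shrink the interval further: we only need displacement $\rho_0/4$ while remaining in a compact set. Let $C=|q(\sigma)|+\rho_0$ and let $\tau''$ be the first time after $\sigma$ with $|q(t)-q(\sigma)|=\rho_0/4$; such a time exists in $(\sigma,\tau')$ since the total displacement is $\ge\rho_0/4$. On $(\sigma,\tau'')$, $q$ lies in $K_{\rho_0/4,C}$.

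This still leaves $d$ depending on $q$ through $C$. To make it uniform, invoke (\hyperlink{J}{$\mathcal J_c$}): for bounding $m_c$ from below it suffices to consider $q$ with $\|q\|_\infty\le R$, so take $C=R$. Remark~\ref{remark1} then gives $d=d(\rho_0/4,R)>0$, and Lemma~\ref{l4} yields
\[
J_c(q)\ge J_{c,(\sigma,\tau'')}(q)\ge\lambda_d H(\rho_0/4)>0
\]
uniformly over such $q$. Hence $m_c\ge\lambda_dH(\rho_0/4)>0$.
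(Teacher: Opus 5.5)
Your proof is correct and follows the paper's argument. Finiteness uses the same last-exit/first-entry construction along a segment from $\mathcal V_c^-$ to $\mathcal V_c^+$. Positivity uses $(\mathcal J_c)$ in the same way to restrict to $\|q\|_{L^\infty}\le R$, finds an interval on which $q$ stays in $K_{\rho_0/4,R}$ with displacement at least $\rho_0/4$, and applies Lemma~\ref{l4}.

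The differences are cosmetic. You exit the $\rho_0/4$-neighbourhood of $\mathcal V_c^-$ rather than using the paper's annulus $\rho_0/4\le\dist(q,\mathcal V_c^-)\le\rho_0/2$. Your truncation at $\tau''$ becomes superfluous once you take $C=R$.
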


\begin{proof}
It is immediate to see that $m_c < +\infty$. 

Indeed, by \hyperlink{V}{$(\mathcal{V}_c)$}, 
\begin{equation}\label{eq:segment}\text{there exist }x_- \in \mathcal{V}_c^-\text{ and }x_+ \in \mathcal{V}_c^+\text{ such that }V(tx_+ + (1-t)x_-) > c\text{ for all }t \in (0,1).\end{equation}  
Choose any pair $\xi^-\in\mathcal V_c^-$, $\xi^+\in\mathcal V_c^+$ and set $\gamma(s)=(1-s)\xi^-+s\xi^+$.
Let $s_-:=\sup\{s\in[0,1]:\gamma(s)\in\mathcal V_c^-\}$ and $s_+:=\inf\{s>s_-:\gamma(s)\in\mathcal V_c^+\}$. Since $\mathcal V_c^\pm$ are closed, $x_{-}:=\gamma(s_-)\in\mathcal V_c^-$ and $x_{+}:=\gamma(s_+)\in\mathcal V_c^+$, and by definition $\gamma(s)\notin\mathcal V_c=\mathcal V_c^-\cup\mathcal V_c^+$ for all $s\in(s_-,s_+)$. Setting $s=(1-t)s_{-}+ts_{+}$ we have $V(tx_++(1-t)x_-)>c$ for all $t\in (0,1)$ and \eqref{eq:segment} follows.

Then, by \eqref{eq:segment}, the piecewise linear function
\[
\overline{q}(t) = \begin{cases}
x_-, & t \le 0,\\
t x_+ + (1-t)x_-, & t \in (0,1),\\
x_+, & t \ge 1,
\end{cases}
\]
belongs to $\Gamma_c$, and hence
\[
m_c \le J_c(\overline{q}) \le \int_0^1 \bigl( \Phi(|\dot{\overline{q}}(t)|) + V(\overline{q}(t)) - c \bigr)\, dt < +\infty.
\] 
To show that $m_c>0$, consider a $q\in\Gamma_{c}$ with $\|q\|_{L^{\infty}(\R;\R^{N})}\leq R$. By definition of $\Gamma_{c}$
we have $\liminf_{t\to\pm\infty}\dist(q(t),\mathcal{V}_c^\pm)=0$
As $\mathcal{V}_c^\pm$ are closed and $q$ is continuous, the maps $t\in\R\to \dist(q(t),\mathcal{V}_c^\pm)$ are continuous. As $\liminf_{t\to+\infty}\dist(q(t),\mathcal{V}_c^{+})=0$ and $\dist(\mathcal{V}_c^{-},\mathcal{V}_c^{+})=\rho_{0}$, we recover $\limsup_{t\to +\infty}\dist(q(t),\mathcal{V}_c^-)\geq \rho_{0}$. Since
$\liminf_{t\to-\infty}\dist(q(t),\mathcal{V}_c^-)=0$, by continuity there exists an interval $(\sigma,\tau)\subset\R$ such that
\begin{equation}\label{eq:1}
	\dist(q(\sigma),\mathcal{V}_c^-)=\rho_{0}/4,\ \dist(q(\tau),\mathcal{V}_c^-)=\rho_{0}/2,\ \rho_{0}/4\leq \dist(q(t),\mathcal{V}_c^-)\leq \rho_{0}/2\ \forall t\in (\sigma,\tau).
\end{equation}
By \eqref{eq:1} and Remark \ref{remark1} we deduce that $\dist(q(t),\mathcal{V}_c)\geq\rho_{0}/4$ for any $t\in (\sigma,\tau)$. Hence, since $\|q\|_{L^{\infty}(\R;\R^{N})}\leq R$, we have that $q(t)\in K_{\rho_{0}/4,R}$ for any $t\in (\sigma,\tau)$. By Remark \ref{remark1}, there exists $d=d(\rho_{0}/4,R)>0$ such that
\begin{equation*}\label{eq:Vbound}
	V(q(t))\geq c+d,\quad \forall t\in (\sigma,\tau).
\end{equation*}
Since \eqref{eq:1} implies also $|q(\tau)-q(\sigma)|\geq \rho_{0}/4$, Lemma  \ref{l4} yields
\[
J_{c,(\sigma,\tau)}(q)\ge\lambda_{d}\,H(\rho_{0}/4),\quad\forall q\in\Gamma_{c}\text{ such that }\|q\|_{L^{\infty}(\R;\R^{N})}\leq R,
\]
and therefore, from \hyperlink{J}{($\mathcal{J}_c$)}, we conclude $m_{c}\geq \lambda_{d}\,H(\rho_{0}/4)$.
\end{proof} 

A second application of Lemma \ref{l4} shows that any finite--action trajectory in $\Gamma_c$ with bounded $L^{\infty}$ norm must approach the sets $\mathcal{V}_c^\pm$ continuously as $t\to\pm\infty$.

\begin{lemma}\label{l5}
	Let $q \in \Gamma_c$ with $J_c(q) < +\infty$ and $\|q\|_{L^{\infty}(\R;\R^{N})}\leq R$. Then
	$$
	\lim_{t \to -\infty} \mathrm{dist}(q(t), \mathcal{V}_c^-) = \lim_{t \to +\infty} \mathrm{dist}(q(t), \mathcal{V}_c^+) = 0.
	$$
\end{lemma}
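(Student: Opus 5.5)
We argue by contradiction, treating $t\to+\infty$; the case $t\to-\infty$ is symmetric. Suppose $\limsup_{t\to+\infty}\dist(q(t),\mathcal V_c^+)>0$. Since $\liminf_{t\to+\infty}\dist(q(t),\mathcal V_c^+)=0$ by definition of $\Gamma_c$, there is $r\in(0,\rho_0/4)$ such that $\dist(q(t),\mathcal V_c^+)$ takes values both below $r$ and above $2r$ on arbitrarily large times. The map $t\mapsto \dist(q(t),\mathcal V_c^+)$ is continuous, so we can build a sequence of disjoint intervals $(\sigma_k,\tau_k)$ with $\sigma_k\to+\infty$ and $\tau_k<\sigma_{k+1}$, such that
\[
\dist(q(\sigma_k),\mathcal V_c^+)=r,\quad \dist(q(\tau_k),\mathcal V_c^+)=2r,\quad r\le \dist(q(t),\mathcal V_c^+)\le 2r \ \ \forall t\in(\sigma_k,\tau_k).
\]

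Next we place $q$ in a compact set away from $\mathcal V_c$ on these intervals. For $t\in(\sigma_k,\tau_k)$ we have $\dist(q(t),\mathcal V_c^+)\ge r$. Also $\dist(q(t),\mathcal V_c^-)\ge \rho_0-2r\ge \rho_0/2> r$, by the triangle inequality and $\dist(\mathcal V_c^-,\mathcal V_c^+)=\rho_0$. Hence $\dist(q(t),\mathcal V_c)\ge r$. Together with $\|q\|_{L^\infty}\le R$ this gives $q(t)\in K_{r,R}$, and Remark~\ref{remark1} yields $d=d(r,R)>0$ with $V(q(t))-c\ge d$ on every $(\sigma_k,\tau_k)$.

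We then apply Lemma~\ref{l4} on each interval. Since $\dist(\cdot,\mathcal V_c^+)$ is $1$-Lipschitz, $|q(\tau_k)-q(\sigma_k)|\ge r$. Monotonicity of $H$ then gives $J_{c,(\sigma_k,\tau_k)}(q)\ge \lambda_d H(r)>0$, a positive constant independent of $k$. The integrand of $J_c$ is nonnegative on $\Gamma_c$ and the intervals are disjoint, so $J_c(q)\ge \sum_k J_{c,(\sigma_k,\tau_k)}(q)=+\infty$. This contradicts $J_c(q)<+\infty$.

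The only delicate point is constructing the crossing intervals so that the whole interval, not only its endpoints, stays in the annular region $r\le\dist\le 2r$. This is done by taking a time where $\dist<r$, followed by a later time where $\dist>2r$, and setting $\sigma_k$ to be the last time before the exit with $\dist=r$ and $\tau_k$ the first time after $\sigma_k$ with $\dist=2r$. This is exactly as in the proof of Lemma~\ref{l3}, and continuity makes it routine. The choice $r<\rho_0/4$ keeps these intervals away from $\mathcal V_c^-$ as well.
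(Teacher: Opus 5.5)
Your proof is correct and is essentially the paper's argument. You argue by contradiction with a sequence of disjoint crossing intervals of the annulus $r\le \dist(q(t),\mathcal V_c^\pm)\le 2r$, $r<\rho_0/4$, use the uniform bound $V(q)-c\ge d(r,R)$ there from Remark~\ref{remark1}, and apply Lemma~\ref{l4} to get a fixed positive action per interval, hence $J_c(q)=+\infty$. The only differences are that the paper treats $t\to-\infty$ and leaves $t\to+\infty$ as symmetric, whereas you do the reverse, and that you explicitly justify $|q(\tau_k)-q(\sigma_k)|\ge r$ and the interval construction, which the paper only asserts.
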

\begin{proof}
Since $q\in\Gamma_{c}$ we have $\liminf_{t \to -\infty} \mathrm{dist}(q(t), \mathcal{V}_c^-)=0$. Assume, by contradiction, that $\limsup_{t \to -\infty} \mathrm{dist}(q(t), \mathcal{V}_c^-) > 2r$ for an $r\in \left(0,\rho_0/4\right)$.
The continuity of the map $t\in\mathbb{R}\mapsto\mathrm{dist}(q(t), \mathcal{V}_c^-)$ implies the existence of a sequence of disjoint intervals $(s_n,t_n)$, with $t_{n+1} < s_n < t_n$ and $t_n \to -\infty$, such that 
\[
	|q(t_n) - q(s_n)|\geq r\quad\text{and}\quad r \le \mathrm{dist}(q(t), \mathcal{V}_c^-) \le 2r,\quad \forall\,\, t \in (s_n, t_n)\,\text{ and }\, n\in\mathbb{N}.
\]
By the definition of $\rho_{0}$, this implies
$\mathrm{dist}(q(t), \mathcal{V}_c)\geq r$, so $q(t)\in K_{r,R}$ for any  $t\in(s_n, t_n)$ and $n\in\mathbb{N}$.
Then, by Remark \ref{remark1}, there exists $d=d(r,R)>0$ such that $V(q(t))\geq c + d$ for any  $t\in(s_n, t_n)$ and $n\in\mathbb{N}$. Applying Lemma \ref{l4}, we deduce
$
J_{c, (s_n, t_n)}(q) \ge \lambda_{d} H(r)$, for any $n \in \mathbb{N}$
and so,
$$
J_c(q) \ge \sum_{n=1}^{\infty} J_{c, (s_n, t_n)}(q)\ge \sum_{n=1}^{\infty}\lambda_{d} H(r) = +\infty,
$$
which contradicts the assumption $J_c(q) < +\infty$. Therefore, $\lim_{t \to -\infty} \mathrm{dist}(q(t), \mathcal{V}_c^-)=0$. Similarly, one obtains $\lim_{t \to +\infty} \mathrm{dist}(q(t), \mathcal{V}_c^+) = 0$, and the proof is complete.
\end{proof}

As a further consequence of Lemma \ref{l4}, we can describe the concentration behavior of trajectories in $\Gamma_c$ whose action is close to the minimal level $m_c$. In particular, such trajectories are forced to transition from a neighborhood of $\mathcal{V}_c^-$ to a neighborhood of $\mathcal{V}_c^+$ within a controlled time interval.

\begin{lemma}\label{l6} There exist  $\nu_{0}>0$ and $L_{0}>0$ such that, 
if $q \in \Gamma_c$ satisfies 
\begin{equation}\label{eq:condizioniq}
\mathrm{dist}(q(0), \mathcal{V}_c) \ge \tfrac{\rho_0}{8},\quad 
\|q\|_{L^\infty(\mathbb{R}; \mathbb{R}^N)} \le R 
\quad\text{and}\quad 
J_c(q) \le m_c + \nu_0,
\end{equation} 
then 
\[
\dist(q(t), \mathcal{V}_c^+) < \tfrac{\rho_0}{8}\text{ for all } t > L_{0},
\text{ and }
\dist(q(t), \mathcal{V}_c^-) < \tfrac{\rho_0}{8}\text{ for all } t < -L_{0}.
\] 
\end{lemma}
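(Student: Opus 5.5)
We argue by contradiction, combining the coercivity estimate of Lemma~\ref{l4} with the uniform lower bound from Lemma~\ref{l3}. The guiding idea is that the transition from $\mathcal V_c^-$ to $\mathcal V_c^+$ already costs at least a fixed amount of action. Any extra excursion away from $\mathcal V_c^\pm$ at distance $\geq\rho_0/8$ costs another fixed amount. So for $\nu_0$ small no extra excursion can occur, and the bounded total action then limits the time spent in the region $\{\dist(\cdot,\mathcal V_c)\ge\rho_0/16\}$.

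Set $d=d(\rho_0/16,R)>0$ as in Remark~\ref{remark1}. Let $\mu_0=\lambda_{d}H(\rho_0/16)$. By Lemma~\ref{l4}, every crossing of an annulus of the form $\{\rho_0/16\le \dist(x,\mathcal V_c^\pm)\le \rho_0/8\}$ on a bounded trajectory costs action at least $\mu_0$, and the same holds for the annulus from $\rho_0/4$ to $\rho_0/2$. The steps are as follows.

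\emph{Step 1: the trajectory cannot return.} Suppose $q$ satisfies \eqref{eq:condizioniq}, and suppose there is $t_1>0$ with $\dist(q(t_1),\mathcal V_c^+)\ge\rho_0/8$ and some earlier $t_0<t_1$ at which $q$ is within $\rho_0/16$ of $\mathcal V_c^+$. Then $q$ makes two disjoint annulus crossings near $\mathcal V_c^+$. The first enters toward $\mathcal V_c^+$ before $t_0$; the second leaves between $t_0$ and $t_1$ and must re-enter, since $\dist(q(t),\mathcal V_c^+)\to0$ by Lemma~\ref{l5}. Together with the transition portion this gives extra action $\ge\mu_0$ beyond what a competitor needs. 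The cleaner route is surgery: build $\tilde q\in\Gamma_c$ with $J_c(\tilde q)\le J_c(q)-\mu_0$ by cutting out the excursion. Near a point of $\mathcal V_c^+$ the potential satisfies $V\ge c$ only in the constraint sense, so cutting requires gluing two points at distance $<\rho_0/8$ from $\mathcal V_c^+$. This is delicate, because nothing says $\mathcal V_c^+$ is connected.

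Instead I would compare with the lower bound directly. Taking $\nu_0<\mu_0$, I would show $J_c(q)\ge m_c^{*}+\mu_0$, where $m_c^{*}$ is a lower bound for the mandatory transition cost. That bound is not equal to $m_c$, so pure counting fails. I therefore accept the surgery approach: truncate $q$ at the first time after the excursion when $q$ comes back within $\rho_0/16$ of $\mathcal V_c^+$. The glued competitor should be
\[
\tilde q(t)=q(t)\ \text{for } t\le t_0,\qquad \tilde q(t)=q(t_0)\ \text{for } t\ge t_0.
\]
If $V(q(t_0))\le c$, i.e. $q(t_0)\in\mathcal V_c^+$, this freezes the curve there at zero cost, and $\tilde q\in\Gamma_c$ with $J_c(\tilde q)\le J_c(q)-\mu_0<m_c$, a contradiction. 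So $t_0$ should be chosen as a time where $q(t_0)\in\mathcal V_c^+$ exactly. Such a time exists if $q$ touches $\mathcal V_c^+$; otherwise I would use the limit in Lemma~\ref{l5} together with a short linear bridge, whose cost tends to $0$ by continuity of $V$ and by \eqref{eq:omogeneity}. This is the main obstacle: controlling the bridge cost uniformly. I would bound it by $\Phi(\varepsilon)+\sup_{B_\varepsilon}(V-c)$, which is small as $\varepsilon\to0$, and choose $\varepsilon$ depending on $q$. This is harmless since only the inequality $J_c(\tilde q)<m_c$ is needed.

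\emph{Step 2: uniform time bound.} By Step 1, for $t>0$ past the first entry into the $\rho_0/8$-neighbourhood of $\mathcal V_c^+$, the trajectory stays there; the symmetric statement holds for $\mathcal V_c^-$ and $t<0$. Between those entry times, $q$ stays at distance $\ge\rho_0/8$ from $\mathcal V_c$, because it cannot be near $\mathcal V_c^-$ for $t>0$ by Step 1 applied to $\mathcal V_c^-$. There $V-c\ge d'=d(\rho_0/8,R)$. Since $J_c(q)\le m_c+\nu_0$, the length of that interval is at most $(m_c+\nu_0)/d'=:L_0$, which gives the claim.
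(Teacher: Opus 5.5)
There is a genuine gap, and it sits in Step 1. To contradict minimality, the surgery must cut $q$ \emph{before} the excursion. It must then close off the curve with an admissible bridge into $\mathcal V_c^+$ whose cost is below $\mu_0-\nu_0$. This is a uniform threshold, since $\nu_0$ is fixed before $q$ is given.

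Your hypothesis only provides a time $t_0<t_1$ with $\dist(q(t_0),\mathcal V_c^+)<\rho_0/16$. A bridge from such a point can cost about $\Phi(\rho_0/16)$ plus the maximum of $V-c$ on a $\rho_0/16$-neighbourhood of $\mathcal V_c$. That is not small compared with $\mu_0=\lambda_dH(\rho_0/16)$. The fallback does not repair this. Exact contacts with $\mathcal V_c^+$, or the points arbitrarily close to $\mathcal V_c^+$ given by Lemma~\ref{l5}, may occur only after $t_1$. Cutting there (as in ``truncate at the first time after the excursion'') keeps the excursion inside the competitor, so no action is saved. Letting $\varepsilon$ depend on $q$ does not address this: what is missing is a time \emph{before} the excursion at which $q$ is uniformly close to $\mathcal V_c^+$.

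Step 2 then has a threshold mismatch. Step 1 says nothing about what happens after the first entry into the $\rho_0/8$-neighbourhood of $\mathcal V_c^+$, since it presupposes a prior $\rho_0/16$-approach. In fact the claim that $q$ stays in that neighbourhood is false for near-minimizers: inserting a short back-and-forth retrace of $q$ around the entry time costs arbitrarily little action, preserves \eqref{eq:condizioniq}, and makes the curve leave again. Moreover, Step 1 applied to $\mathcal V_c^-$ only gives $\dist(q(t),\mathcal V_c^-)\ge\rho_0/16$ for $t>0$, not $\rho_0/8$.

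The missing idea is to fix a small uniform radius first and let it define both the cut time and $L_0$, which is what the paper does.
\begin{itemize}
\item Choose $r_0<\rho_0/16$ so small that $\nu_0:=\Phi(r_0)+\max\{V(x)-c:\ |x|\le R+\rho_0/16,\ \dist(x,\mathcal V_c)\le r_0\}<\frac12\mu_0$. This is possible because the maximum tends to $0$ by compactness.
\item Let $\sigma_0$ be the last time with $\dist(q,\mathcal V_c^-)\le r_0$, and $\tau_0$ the first later time with $\dist(q,\mathcal V_c^+)\le r_0$.
\item Replace $q$ after $\tau_0$ by a unit-time segment to a nearest point of $\mathcal V_c^+$. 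This is admissible by the nearest-point argument used for \eqref{eq:segment}; connectedness of $\mathcal V_c^+$ plays no role. The competitor lies in $\Gamma_c$ and costs at most $\nu_0$ on $(\tau_0,+\infty)$.
\item Hence $J_{c,(\tau_0,+\infty)}(q)\le 2\nu_0<\mu_0$, which rules out any later crossing of $\{\rho_0/16\le\dist(\cdot,\mathcal V_c^+)\le\rho_0/8\}$. The symmetric argument works before $\sigma_0$.
\item Consequently $0\in(\sigma_0,\tau_0)$, and $\dist(q(t),\mathcal V_c)>r_0$ on that interval. With $d_{r_0}$ the lower bound for $V-c$ from Remark~\ref{remark1}, this gives $\tau_0,\,|\sigma_0|\le L_0:=(m_c+\nu_0)/d_{r_0}$.
\end{itemize}
The essential point is that the time bound is proved for the $r_0$-approach, not for the $\rho_0/8$-entry.
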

\begin{proof}  
We begin by fixing the constants $\nu_0$ and $L_0$. 
By Remark~\ref{remark1}, there exists $d_{\rho_0} > 0$ such that
\begin{equation}\label{eq:d0}
V(x) \geq c + d_{\rho_0} \quad \text{for all } x \text{ with } |x| \leq R + \tfrac{\rho_0}{8} 
\text{ and } \dist(x, \mathcal V_c) \geq \tfrac{\rho_0}{16}.
\end{equation}
For $r \in (0, \tfrac{\rho_0}{16})$, define
\[
\mu_r = \max\Bigl\{V(x) - c : |x| \le R + \tfrac{\rho_0}{16},\ 
\dist(x, \mathcal{V}_c) \le r\Bigr\},
\]
and note that $\mu_r \to 0$ as $r \to 0^{+}$. 
Since $\Phi(r) \to 0$ as $r \to 0^{+}$, we can fix 
$r_0 \in (0, \tfrac{\rho_0}{16})$ such that
\begin{equation}\label{eq:nu0}
\nu_0 := \Phi(r_0) + \mu_{r_0} < \tfrac{1}{2}\lambda_{d_{\rho_0}} 
H(\tfrac{\rho_0}{16}),
\end{equation}
where $\lambda_{d_{\rho_0}}$ and $H$ are defined in Lemma \ref{l4}. Applying Remark~\ref{remark1} again with this $r_0$, we obtain $d_{r_0} > 0$ satisfying
\begin{equation}\label{eq:dr0}
V(x) \geq c + d_{r_0} \quad \text{for all } x \text{ with } |x| \leq R + \tfrac{\rho_0}{8} 
\text{ and } \dist(x, \mathcal V_c) \geq r_0.
\end{equation}
Finally, we set
\begin{equation*}\label{eq:L0}
L_0 := \tfrac{m_c + \nu_{0}}{d_{r_0}}.
\end{equation*}
With $\nu_0$ and $L_0$ now fixed, let $q \in \Gamma_c$ satisfy \eqref{eq:condizioniq}. Define
\[
\sigma_{0}= \sup\{ t \in \mathbb{R} : \text{dist}(q(t),\mathcal{V}_c^-) \leq r_{0} \} \text{ and }  \tau_{0} = \inf\{ t > \sigma_{0} : \text{dist}(q(t),\mathcal{V}_c^+) \leq r_{0}\}.\]
By Lemma~\ref{l5} the sets in the definitions of $\sigma_0$ and $\tau_0$ are nonempty, hence  $\sigma_0 \leq \tau_0 \in \mathbb{R}$. 
By continuity,
\[
\dist(q(\sigma_0),\mathcal{V}_c^-) = r_0 < \tfrac{\rho_0}{16},
\quad 
\dist(q(\tau_0),\mathcal{V}_c^+) = r_0 < \tfrac{\rho_0}{16},
\]
and since $\dist(\mathcal V_c^{-},\mathcal V_c^{+})=\rho_0$, we have $\sigma_0<\tau_0$. 
Moreover, by definition,
\begin{equation}\label{5}
\dist(q(t),\mathcal{V}_c) > r_0 \quad \text{for all } t \in (\sigma_0, \tau_0).
\end{equation}
We claim that
\begin{equation}\label{6}
	\text{dist}(q(t),\mathcal{V}_c^-) < \tfrac{\rho_0}{8}, \quad \forall\,\, t \leq \sigma_{0}.
\end{equation}
Suppose, by contradiction, that there exists $ t^* < \sigma_{0} $ such that
$$
\text{dist}(q(t^*),\mathcal{V}_c^-) \geq \tfrac{\rho_0}{8}.
$$
Since $\text{dist}(q(\sigma_{0}),\mathcal{V}_c^-) \leq \tfrac{\rho_0}{16} $, continuity yields an interval $ (\sigma, \tau) \subset (t^{*}, \sigma_0) $ where
$$
|q(\tau) - q(\sigma)| \ge \tfrac{\rho_0}{16} \text{ and }\tfrac{\rho_0}{16} \leq \text{dist}(q(t),\mathcal{V}_c^-) \leq \tfrac{\rho_0}{8}, \text{ for all }t \in (\sigma, \tau) .
$$
Hence, 
$
\operatorname{dist}(q(t),\mathcal{V}_c)\geq\frac{\rho_0}{16}$, for all $t\in (\sigma, \tau),
$
and, as $ \|q\|_{L^\infty(\mathbb{R};\mathbb{R}^N)} \leq R $, by \eqref{eq:d0} and Lemma \ref{l4} \begin{equation}\label{7}
	J_{c,(\sigma,\tau)}(q) \geq \lambda_{d_{\rho_{0}}} H(|q(\tau)-q(\sigma)|)\geq \lambda_{d_{\rho_{0}}} H(\tfrac{\rho_0}{16}).
\end{equation}
On the other hand, because $\text{dist}(q(\sigma_0),\mathcal{V}_c^-) =r_{0}<\rho_0/16<\rho_0=\dist(\mathcal V_c^-,\mathcal V_c^+)$, arguing as in the proof of \eqref{eq:segment}, we derive that there is $\xi_{\sigma_0} \in \mathcal{V}_c^- $ such that 
\begin{equation}\label{8}
	V(\xi_{\sigma_0})=c,\quad |q(\sigma_0) - \xi_{\sigma_0}| =r_{0}\quad\text{and}\quad V((1 - s)q(\sigma_0) + s\xi_{\sigma_0}) > c\quad \forall\,\, s \in (0, 1).
\end{equation}
Then we consider the function
$$
\tilde q(t) =
\begin{cases} 
	\xi_{\sigma_0} & \text{if } t \leq \sigma_0 - 1, \\
	(t - \sigma_0 + 1)q(\sigma_0) + ( \sigma_0 - t)\xi_{\sigma_0} & \text{if } \sigma_0 - 1 < t < \sigma_0, \\
	q(t) & \text{if } \sigma_0 \leq t.
\end{cases}
$$
Since $q \in \Gamma_c$ and $\xi_{\sigma_0} \in \mathcal{V}_c^-$, it follows from \eqref{8} that $\tilde q \in \Gamma_c$. Hence $J_c(\tilde q) \geq m_c$. Moreover, since $|q(\sigma_0) - \xi_{\sigma_0}| = r_{0}$, $ \|q\|_{L^\infty(\mathbb{R};\mathbb{R}^N)} \leq R $ we derive 
$
|\tilde q(t)|\leq  R+r_{0}$ and $\mathrm{dist}(\tilde q(t), \mathcal{V}_c^{-}) \le r_{0}$ for all $t\in(\sigma_0-1,\sigma_0)$.
Then, by the definitions of $\tilde q$, $\mu_{r_0}$ and the monotonicity of $\Phi$,
$$
J_{c,(-\infty,\sigma_0)}(\tilde q) \leq \int_{\sigma_0-1}^{\sigma_0} (\Phi(|q(\sigma_0) - \xi_{\sigma_{0}}|) + \mu_{r_{0}}) \, dt\leq \Phi(r_{0}) + \mu_{r_{0}} = \nu_0.
$$
Hence
$
J_{c,(\sigma_0,+\infty)}(\tilde q)=J_{c}(\tilde q)-J_{c,(-\infty,\sigma_0)}(\tilde q) \geq m_c - \nu_0,
$
from which it follows
$$
 J_c(q) = J_{c,(-\infty,\sigma_0)}(q) + J_{c,(\sigma_0,+\infty)}(\tilde q) \geq J_{c,(-\infty,\sigma_0)}(q) + m_c - \nu_0.
$$
Since $J_c(q)\le m_c + \nu_0$, the above inequality implies that
$J_{c,(-\infty,\sigma_0)}(q) \leq 2\nu_{0}$
which, together with \eqref{7}, yields
$$
2\nu_0 \geq J_{c,(-\infty,\sigma_0)}(q) \geq J_{c,(\sigma,\tau)}(q) \ge \lambda_{d_{\rho_{0}}} H(\tfrac{\rho_0}{16}),
$$
which contradicts \eqref{eq:nu0}.  A symmetric argument proves that
\begin{equation}\label{10}
	\text{dist}(q(t),\mathcal{V}_c^+) < \dfrac{\rho_0}{8}, \quad \forall\,\, t \geq \tau_0.
\end{equation}
Finally, to conclude the proof, we show that 
\[
-L_0 < \sigma_0 < 0 < \tau_0 < L_0.
\]
The assumption $\dist(q(0), \mathcal{V}_c) \geq \frac{\rho_0}{8}$ together with \eqref{6} and \eqref{10} implies $0 \in (\sigma_0, \tau_0)$. 
Moreover, by \eqref{5} and \eqref{eq:dr0}, $V(q(t)) - c \geq d_{r_0}$ for all $t \in (\sigma_0, \tau_0)$. Consequently,
\[m_c + \nu_0 \geq J_c(q) \geq J_{c,(0, \tau_0)}(q) 
\geq \int_{0}^{\tau_0} \bigl( V(q(t)) - c \bigr) \, dt \geq \tau_0 \, d_{r_0},
\]  
so $\tau_0 < \frac{m_c+\nu_{0}}{d_{r_0}} = L_0$. 
A similar estimate gives $\sigma_0 > -L_0$, completing the proof.
\end{proof}

Thanks to the compactness properties established above, we can now apply the direct method of the calculus of variations to obtain the existence of a minimizer for the functional $J_c$ on $\Gamma_{c}$ at level $m_c$.

\begin{lemma}\label{l7}
	There exists $q_0 \in \Gamma_c$ such that 
	$$
	J_c(q_0) = m_c,\quad \operatorname{dist} (q_0(0),\mathcal{V}_c) \geq \frac{\rho_0}{8}\quad\text{and}\quad \|q_0\|_{L^\infty (\mathbb{R}; \mathbb{R}^N)} \leq R.
	$$
\end{lemma}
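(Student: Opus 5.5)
We argue by the direct method, using the normalization of Lemma~\ref{l6} to prevent loss of compactness by translation. By \hyperlink{J}{$(\mathcal J_c)$} we take a minimizing sequence $(q_n)\subset\Gamma_c$ with $\|q_n\|_{L^\infty}\le R$ and $J_c(q_n)\to m_c$. We may also assume $J_c(q_n)\le m_c+\nu_0$ for all $n$, with $\nu_0$ from Lemma~\ref{l6}.

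\textbf{Normalization.} We show that each $q_n$ passes through the region $\{\dist(\cdot,\mathcal V_c)\ge\rho_0/8\}$. Since $J_c(q_n)<\infty$, Lemma~\ref{l5} gives $\dist(q_n(t),\mathcal V_c^\pm)\to0$ as $t\to\pm\infty$. Suppose $\dist(q_n(t),\mathcal V_c)<\rho_0/8$ for all $t$. Then $q_n(\mathbb R)$ would lie in the union of the $\rho_0/8$-neighbourhoods of $\mathcal V_c^-$ and of $\mathcal V_c^+$. These neighbourhoods are disjoint open sets, because $\rho_0>\rho_0/4$, so connectedness of $q_n(\mathbb R)$ is contradicted. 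Hence there is $t_n$ with $\dist(q_n(t_n),\mathcal V_c)\ge\rho_0/8$. Problem \eqref{S} is autonomous, and $J_c$ and $\Gamma_c$ are translation invariant, so we may replace $q_n$ by $q_n(\cdot+t_n)$. The translated sequence satisfies \eqref{eq:condizioniq}. Lemma~\ref{l6} then gives, uniformly in $n$,
\[
\dist(q_n(t),\mathcal V_c^+)<\tfrac{\rho_0}{8}\ \text{ for } t>L_0,\qquad \dist(q_n(t),\mathcal V_c^-)<\tfrac{\rho_0}{8}\ \text{ for } t<-L_0 .
\]

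\textbf{Compactness.} On any compact interval $I$ we have $\int_I\Phi(|\dot q_n|)\le J_c(q_n)\le m_c+\nu_0$, and $|q_n|\le R$. So $(q_n)$ is bounded in $W^{1,\Phi}(I;\mathbb R^N)$, using the $\Delta_2$ condition to pass from modular bounds to norm bounds. This space is reflexive. A diagonal argument gives a subsequence with $q_n\rightharpoonup q_0$ in $W^{1,\Phi}_{loc}$. As in the proof of Lemma~\ref{l1}, $q_n\to q_0$ locally uniformly.

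Several properties pass to the limit by pointwise convergence:
\begin{itemize}
\item $V(q_0(t))\ge c$ for all $t$, by continuity of $V$;
\item $|q_0|\le R$;
\item $\dist(q_0(0),\mathcal V_c)\ge\rho_0/8$;
\item $\dist(q_0(t),\mathcal V_c^\pm)\le\rho_0/8$ for $\pm t>L_0$.
\end{itemize}
Lemma~\ref{l1} gives $J_c(q_0)\le\liminf J_c(q_n)=m_c$.

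\textbf{Membership in $\Gamma_c$ (main obstacle).} The remaining step is to show $q_0\in\Gamma_c$, which then forces $J_c(q_0)=m_c$. The asymptotic conditions do not pass to the limit directly, since the limit could in principle stay at distance about $\rho_0/8$ from $\mathcal V_c^+$. To handle this, we repeat the argument of Lemma~\ref{l5} for $q_0$, using only what we now know:
\begin{itemize}
\item $J_c(q_0)<\infty$ and $\|q_0\|_\infty\le R$;
\item $q_0(t)$ stays in the $\rho_0/8$-neighbourhood of $\mathcal V_c^+$ for $t>L_0$.
\end{itemize}

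Suppose $\liminf_{t\to+\infty}\dist(q_0(t),\mathcal V_c^+)=2r>0$. For $t$ large, $q_0(t)$ then lies in the compact set $K_{r,R}$, where $V\ge c+d$ with $d=d(r,R)>0$ by Remark~\ref{remark1}. This gives $\int_T^\infty (V(q_0)-c)\,dt=+\infty$, contradicting $J_c(q_0)\le m_c$. Here we use that $\dist(q_0(t),\mathcal V_c^-)\ge 7\rho_0/8$ for $t>L_0$, so $\dist(q_0(t),\mathcal V_c)=\dist(q_0(t),\mathcal V_c^+)$.

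Hence $\liminf_{t\to+\infty}\dist(q_0(t),\mathcal V_c^+)=0$, and symmetrically at $-\infty$. So $q_0\in\Gamma_c$, and therefore $J_c(q_0)\ge m_c$, which gives $J_c(q_0)=m_c$.
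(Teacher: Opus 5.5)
Your proof is correct and follows essentially the same route as the paper. Both arguments normalize by translation so that $\dist(q_n(0),\mathcal V_c)\ge\rho_0/8$, use Lemma~\ref{l6} for uniform tail control, pass to a locally weakly convergent subsequence with Lemma~\ref{l1}, and recover the asymptotic conditions from the finite action plus the $\rho_0/8$-confinement for $|t|>L_0$. The differences are cosmetic: you use connectedness where the paper uses the intermediate value theorem, and a contradiction via the compact set $K_{r,R}$ of Remark~\ref{remark1} where the paper extracts times $\tau_n$ with $V(q_0(\tau_n))\to c$ and places the limit point in $\mathcal V_c^+$.
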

\begin{proof}
By condition \hyperlink{J}{$(\mathcal{J}_c)$}, we can find a sequence $(q_n) \subset \Gamma_c$ satisfying
\begin{equation}\label{11}
	J_c(q_n)\to m_c\quad \text{and}\quad\|q_n\|_{L^\infty (\mathbb{R}; \mathbb{R}^N)} \leq R \quad \text{for all } n \in \mathbb{N}.
\end{equation}
For each $n\in\N$, since $q_{n}\in\Gamma_{c}$, by continuity there exists  $t_n\in\mathbb R$ such that
$
\dist(q_{n}(t_n), \mathcal{V}_c^{-}) = \frac{\rho_0}{8}$.
Because $\Gamma_c$ is invariant under time translations, by shifting $q_{n}$, it is not restrictive to assume $t_{n}=0$, so that
\begin{equation}\label{11.1}\dist (q_n(0),\mathcal{V}_c) \geq \frac{\rho_0}{8}\quad\forall n\in\N.\end{equation}
Without loss of generality we can also assume that
\begin{equation}\label{12}
	J_c(q_n) \leq  m_c+\nu_0\quad\forall n\in\N,
\end{equation}
with $\nu_{0}$ given by Lemma \ref{l6}.
By \eqref{11}, \eqref{11.1}, \eqref{12} and Lemma \ref{l6}, there exists a constant $L_0> 0$ such that for any $n\in\N$ 
\begin{equation}\label{13}
	\dist (q_n(t), \mathcal{V}_c^+) <\dfrac{\rho_0}{8},\quad \forall\,\, t\geq L_0\text{ and }\dist (q_n(t), \mathcal{V}_c^-) < \dfrac{\rho_0}{8} \quad \forall\,\, t\leq -L_0.
\end{equation}	
Since $\|q_{n}\|_{L^{\infty}(\R;\R^{N})}\leq R$ and $m_{c}+\nu_{0}\geq J_{c}(q_{n})\geq \int_{\R}\Phi(|\dot q_{n}(t)|)\, dt$, we obtain that $(q_{n})$ is bounded in $W^{1,\Phi}(I;\R^{N})$ for any bounded interval $I\subset\R$. Then, by the continuity of the immersion $W^{1,\Phi}_{loc}(\R;\R^{N})$ in $W^{1,1}_{loc}(\R;\R^{N})$, $(q_{n})$ is bounded in $W^{1,1}(I;\R^{N})$ and so equicontinuous on $I$ for any bounded interval $I\subset\R$.  By using a diagonal procedure, we obtain the existence of 
$q_{0}\in W^{1,\Phi}_{loc}(\mathbb R;\mathbb R^{N})$ and a subsequence of
$(q_{n})$, still denoted $(q_{n})$, such that 
$$
q_n\rightharpoonup q_0\quad \text{in}\quad W_{\text{loc}}^{1,\Phi}(\mathbb{R};\mathbb{R}^N)\quad\text{and}\quad q_n(t)\to q_0(t)\quad\text{for all }t\in\mathbb{R}.
$$ 
By Lemma \ref{l1}, the lower semicontinuity of $J_c$ yields $J_c(q_0) \leq m_c$. By \eqref{11}, \eqref{11.1}, \eqref{12}, \eqref{13} and pointwise convergence
$$
q_0\in\mathcal{X}_c,\quad \|q_0\|_{L^\infty (\mathbb{R}; \mathbb{R}^N)} \leq R,\quad \operatorname{dist} (q_0(0),\mathcal{V}_c) \geq \frac{\rho_0}{8},
$$ 
and
\begin{equation}\label{15}
	\dist (q_0(t), \mathcal{V}_c^+) \leq\dfrac{\rho_0}{8},\quad \forall\,\, t\geq L_0\text{ and }\dist (q_0(t), \mathcal{V}_c^-) \leq \dfrac{\rho_0}{8} \quad \forall\,\, t\leq -L_0.
\end{equation}
To conclude that $q_{0}\in\Gamma_{c}$ we need to verify that $\liminf_{t \to \pm\infty}\dist(q(t),\mathcal{V}_c^\pm)=0$. Consider first the case $t\to+\infty$. Since $J_{c} (q_0) \leq m_c$ we have $
\liminf_{t \to +\infty} \left(V(q_0(t)) - c\right) = 0$, so
there exists a sequence $(\tau_n)\subset(L_0,+\infty)$ with $\tau_n\to\infty$ such that $V(q_0(\tau_n))\to c$. Since $|q_0(\tau_n)|\leq R$ for any $n\in\mathbb{N}$, there exists $w\in\mathbb{R}^N$ such that, up to a subsequence, $q_0(\tau_n)\to w$. Therefore, $V(q_0(\tau_n))\to V(w)=c$. Then $w\in \mathcal{V}_c$ and, by \eqref{15}, $w\in \mathcal{V}_c^{+}$ showing that
$\liminf_{t \to +\infty} \dist (q_0(t), \mathcal{V}_c^+) = 0.$
The argument for $t\to-\infty$ is analogous, giving $\liminf_{t \to -\infty} \dist (q_0(t), \mathcal{V}_c^-) = 0.$
Therefore $q_0 \in \Gamma_c$ with $J_c(q_0)=m_c$. 
\end{proof}

%
%
\subsection{Contact times for minima in $\Gamma_{c}$ and related prescribed Energy  solutions.}

 By Lemma \ref{l7} the normalized minimal set
\[
\mathcal{M}_c = \left\{ q \in \Gamma_c \mid \dist(q(0),\mathcal{V}_c)\geq \tfrac{\rho_0}{8}, \, J_c(q) = m_c, \, \|q\|_{L^\infty(\R;\R^N)} \leq R \right\},
\]
is nonempty. According to \hyperlink{J}{$(\mathcal{J}_c)$} we have
$\mathcal{M}_c \subset \{\, q\in\Gamma_{c} \mid J_{c}(q)=m_{c}\,\}.$
Moreover, by translation $\mathcal{M}_c$ generates the set of all the minimizers with $L^\infty$-norm bounded by $R$. 
Indeed, the argument leading to \eqref{eq:1} shows that for any $q\in\Gamma_{c}$ there exists $t_q\in\R$ such that 
$
\dist(q(t_q),\mathcal{V}_c) \geq \frac{\rho_0}{8}$. 
Hence, if $J_c(q)=m_c$ and $\|q\|_{L^\infty(\R;\R^N)} \leq R$, then $\tilde q(\cdot) = q(\cdot - t_q)\in \mathcal{M}_c$.\medskip

For each $q \in \Gamma_{c}$ we define
\begin{equation}\label{Definition-alpha}
	\alpha_q = 
	\begin{cases}
		-\infty & \text{if } q(\mathbb{R}) \cap \mathcal{V}_c^- = \emptyset, \\
		\sup \{ t \in \mathbb{R} : q(t) \in \mathcal{V}_c^- \} & \text{if } q(\mathbb{R}) \cap \mathcal{V}_c^- \neq \emptyset,
	\end{cases}
\end{equation}
and
\begin{equation}\label{Definition-omega}
	\omega_q =
	\begin{cases}
		\inf \{ t > \alpha_q : q(t) \in \mathcal{V}_c^+ \} & \text{if } q(\mathbb{R}) \cap \mathcal{V}_c^+ \neq \emptyset, \\
		+\infty & \text{if } q(\mathbb{R}) \cap \mathcal{V}_c^+ = \emptyset.
	\end{cases}
\end{equation} 
Thanks to Lemma \ref{l6} $\alpha_{q}$ and $\omega_{q}$ are always well defined in $\R\cup\{\pm\infty\}$ for functions $q\in\Gamma_{c}$ with action close to $m_{c}$. Indeed
using Lemma \ref{l6} and Remark \ref{remark1}, if $q\in\Gamma_{c}$ is such that
$J_{c}(q)\leq m_{c}+\nu_{0}$,  the point $q(t)$  does not belong to $\mathcal{V}_c^+$ for $t \le -L_0$ nor to $\mathcal{V}_c^-$ for $t \ge L_0$. This guarantees that 
\begin{equation}\label{eq:bondalphaomega}
	\alpha_q \le L_0 < +\infty\quad\text{ and }\quad\omega_q \ge -L_0 > -\infty.
\end{equation}
These times are called the {\it contact times} of the trajectory $q$ with the sublevel sets $\mathcal{V}_c^+$ and $\mathcal{V}_c^-$ of the potential. 
The next lemma describes some of the first basic properties of the contact times for points in $\mathcal{M}_{c}$.

\begin{lemma}\label{l8}
	For all
	$q \in \mathcal{M}_c$, we have  
	\[\lim_{t\to\alpha_{q}^{+}}\dist(q(t),\mathcal{V}_c^-)=\lim_{t\to\omega_{q}^{-}}\dist(q(t),\mathcal{V}_c^+)=0\]  
	and, if $\alpha_q \in \mathbb{R}$ or $\omega_q \in \mathbb{R}$, then
$
q(\alpha_q) \in \mathcal{V}_c^-$ or $q(\omega_q) \in \mathcal{V}_c^+$, respectively. 
Moreover
\begin{itemize}
        \item[(a)] $V(q(t)) > c$ for all $t \in (\alpha_q, \omega_q)$;
		\item[(b)] If $\alpha_q \in \mathbb{R}$, then $q(t)= q(\alpha_q)$ for all  $t\in (-\infty, \alpha_q]$;
		\item[(c)] If $\omega_q \in \mathbb{R}$, then $q(t)= q(\omega_q)$ for all $t\in [\omega_q, +\infty)$;
		\item[(d)] $-\infty \leq \alpha_q <0< \omega_q \leq +\infty$;
\end{itemize}
and
\begin{equation}\label{20}
		m_{c}=J_c(q) = J_{c,(\alpha_q,\omega_q)}(q).
	\end{equation}
\end{lemma}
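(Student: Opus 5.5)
We follow the order of the statement, starting with the easy topological facts, then a cut-and-paste comparison argument giving (b), (c) and \eqref{20}.

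\emph{Closedness and (d).} Since $\mathcal V_c^\pm$ are closed and $q$ is continuous, if $\alpha_q\in\R$ then $q(\alpha_q)\in\mathcal V_c^-$ as a limit of points of $\mathcal V_c^-$; likewise $q(\omega_q)\in\mathcal V_c^+$ when $\omega_q\in\R$. Because $\dist(q(0),\mathcal V_c)\ge\rho_0/8$ for $q\in\mathcal M_c$, we have $\alpha_q\neq0\neq\omega_q$. To get $\alpha_q<0$, suppose instead that $q(t_1)\in\mathcal V_c^-$ for some $t_1>0$. Then $q$ goes from $q(0)$, at distance $\ge\rho_0/8$ from $\mathcal V_c$, back to $\mathcal V_c^-$ after time $0$, while by Lemma~\ref{l5} it must eventually approach $\mathcal V_c^+$. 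Rather than a separate estimate, this is excluded by the replacement argument below: the portion of the path before the last visit to $\mathcal V_c^-$ can be cut off at no cost. Once $\alpha_q<0$ is known, $\omega_q>0$ follows, since $q(t)\notin\mathcal V_c^+$ on $(\alpha_q,0]$ (the point $q(0)$ is not in $\mathcal V_c^+$, and reaching $\mathcal V_c^+$ before $0$ would require an earlier visit, which is handled symmetrically).

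\emph{(a) and the limits.} By definition of $\alpha_q$ and $\omega_q$, $q(t)\notin\mathcal V_c^-\cup\mathcal V_c^+=\mathcal V_c$ for $t\in(\alpha_q,\omega_q)$, so $V(q(t))>c$; this is (a). If $\alpha_q\in\R$, continuity gives $\dist(q(t),\mathcal V_c^-)\to0$ as $t\to\alpha_q^+$. If $\alpha_q=-\infty$, the limit is Lemma~\ref{l5}. The same reasoning applies at $\omega_q$.

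\emph{(b), (c) and \eqref{20}: the main step.} Suppose $\alpha_q\in\R$ and define $\tilde q(t)=q(\alpha_q)$ for $t\le\alpha_q$ and $\tilde q=q$ otherwise. Then:
\begin{itemize}
\item $\tilde q\in W^{1,\Phi}_{loc}$, since it is continuous and glued at a point;
\item $V(\tilde q)\ge c$, since $V(q(\alpha_q))=c$ when $q(\alpha_q)\in\mathcal V_c^-$;
\item the $+\infty$ behaviour is unchanged and $\tilde q$ is eventually in $\mathcal V_c^-$ at $-\infty$, so $\tilde q\in\Gamma_c$;
\item $\|\tilde q\|_\infty\le R$.
\end{itemize}
Moreover $J_c(\tilde q)=J_c(q)-J_{c,(-\infty,\alpha_q)}(q)$, because the constant part contributes $\Phi(0)+V-c=0$. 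Minimality then forces $J_{c,(-\infty,\alpha_q)}(q)=0$. As the integrand is nonnegative, $\Phi(|\dot q|)=0$ a.e. on $(-\infty,\alpha_q)$, hence $\dot q=0$ a.e. there and $q$ is constant, which is (b). The same construction at $\omega_q$ gives (c) and $J_{c,(\omega_q,\infty)}(q)=0$, and together these yield \eqref{20}.

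The same device settles (d). If $\alpha_q>0$, the action on $(-\infty,\alpha_q)$ vanishes, so $q$ is constant on $(-\infty,\alpha_q]$, which contains $0$. Then $q(0)=q(\alpha_q)\in\mathcal V_c^-$, contradicting $\dist(q(0),\mathcal V_c)\ge\rho_0/8$. The symmetric argument gives $\omega_q>0$.

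The main obstacle is the well-definedness of $\omega_q$ as a first contact after $\alpha_q$, which needs \eqref{eq:bondalphaomega}, together with checking admissibility of the truncated $\tilde q$ in $\Gamma_c$. Both follow from Lemma~\ref{l6} and the closedness of $\mathcal V_c^\pm$.
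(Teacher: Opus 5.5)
Your proposal is correct and follows essentially the paper's route. The cut-off competitor $\tilde q$ (frozen before $\alpha_q$, resp. after $\omega_q$) together with minimality gives $J_{c,(-\infty,\alpha_q)}(q)=J_{c,(\omega_q,+\infty)}(q)=0$, hence (b), (c) and \eqref{20}; (d) then follows from $\dist(q(0),\mathcal V_c)\ge\rho_0/8$, and (a) and the limits come from the definitions of the contact times, the closedness of $\mathcal V_c^\pm$ and Lemma~\ref{l5}. Your first, heuristic paragraph on (d) is superfluous, since your last paragraph gives the clean argument. Also, the nonemptiness of $(\alpha_q,\omega_q)$, which the paper proves directly via \eqref{eq:bondalphaomega} and $\mathcal V_c^-\cap\mathcal V_c^+=\emptyset$, is in your version simply a consequence of (d).
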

\begin{proof} 
If $\alpha_{q}=-\infty$ we have $\lim_{t\to\alpha_{q}^{+}}\dist(q(t),\mathcal{V}_c^-)=0$ by Lemma \ref{l5}. When $\alpha_{q}\in\R$, by definition there exists a sequence $(t_{n})\subset [\alpha_{q},+\infty)$ such that $t_{n}\to \alpha_{q}^{+}$ and $q(t_{n})\in \mathcal{V}_c^-$. By continuity $q(t_{n})\to q(\alpha_{q})$ and since $\mathcal{V}_c^-$ is closed, $q(\alpha_{q})\in\mathcal{V}_c^-$. This yields that if $\alpha_{q}\in\R$ we have $q(\alpha_{q})\in \mathcal{V}_c^-$ and, by continuity $\lim_{t\to\alpha_{q}^{+}}\dist(q(t),\mathcal{V}_c^-)=0$. A symmetric argument shows that $\lim_{t\to\omega_{q}^{-}}\dist(q(t),\mathcal{V}_c^+)=0$ and that $q(\omega_{q})\in \mathcal{V}_c^+$ whenever $\omega_{q}\in\R$. 

To show (a), note first that $\alpha_q < \omega_q$. Indeed, if $\alpha_q = \omega_q$, then, by \eqref{eq:bondalphaomega}, both would be finite, and by continuity we would have the contradiction 
$q(\alpha_q) = q(\omega_q) \in \mathcal{V}_c^- \cap \mathcal{V}_c^+=\emptyset$. Moreover if $t\in (\alpha_q, \omega_q)$, then, by the definition of $\alpha_q$ and $\omega_q$, $q(t)\not\in\mathcal{V}_c$ and so $V(q(t)) > c$ for all $t\in (\alpha_q, \omega_q)$ and (a) follows. 

To show (b), assume $\alpha_{q}\in\R$ and define the function 
 $\tilde q(t)$ by setting $\tilde q(t)=q(\alpha_q)$ for $t \leq \alpha_q$, and  
$\tilde q(t)=q(t)$ for $t > \alpha_q$.
Then $\tilde q(t)\in \Gamma_{c}$, so by minimality of $q$ we have
\[m_{c}\leq J_{c}(\tilde q)=J_{c, (\alpha_{q},+\infty)}(q)\leq J_{c}(q)=m_{c}.\] 
This implies that $J_{c, (-\infty,\alpha_{q})}(q)=0$, and since $V(q(t))-c\geq 0$ on $\R$, we deduce
$\int_{-\infty}^{\alpha_q} \Phi(|\dot{q}(t)|) dt = 0$.  Hence, $\dot{q}(t) = 0$ almost everywhere on $(-\infty, \alpha_q)$, and so $q$ is constantly equal to $q(\alpha_q)$ on  $(-\infty, \alpha_q)$.  A symmetric argument shows also the point (c).

From (b) and (c) we derive that $q(t)\in \mathcal{V}_c$ for $t\in\R\setminus (\alpha_{q},\omega_{q})$ and so (d) follows since $\dist(q(0),\mathcal{V}_c)\geq \frac{\rho_0}{8}$ by definition of $\mathcal{M}_c$. Finally, using (b) and (c) in the finite cases, we always have $J_c(q) = J_{c,(\alpha_q,\omega_q)}(q)$ and \eqref{20} follows.
\end{proof}

A key feature of the contact times is that the interval $(\alpha_q, \omega_q)$ identifies where the minimal trajectory $q \in \mathcal{M}_c$ is a weak solution of the system \eqref{S}.

\begin{lemma}\label{l9}
	If $q \in \mathcal{M}_c$ then 
	$q$ is a weak solution of the  system \eqref{S} on $(\alpha_q, \omega_q)$.
    \end{lemma}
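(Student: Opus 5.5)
The plan is the standard first-variation argument, localized inside $(\alpha_q,\omega_q)$ where the constraint is inactive. Fix $\psi\in C_c^\infty((\alpha_q,\omega_q);\mathbb R^N)$ with support in a compact interval $[a,b]\subset(\alpha_q,\omega_q)$. Our goal is the weak form
\[
\int_{\mathbb R}\phi(|\dot q|)\dot q\cdot\dot\psi\,dt+\int_{\mathbb R}\nabla V(q)\cdot\psi\,dt=0 .
\]
By Lemma \ref{l8}(a) and the continuity of $q$ and $V$, the compact set $q([a,b])$ lies in the open set $\{V>c\}$. Hence there is $\delta>0$ with $V(q(t))\ge c+\delta$ on $[a,b]$. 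Since $\|q\|_{L^\infty}\le R$ and $V$ is uniformly continuous on bounded sets, there is $\varepsilon_0>0$ such that for $|\varepsilon|<\varepsilon_0$ we have $V(q+\varepsilon\psi)\ge c$ on $[a,b]$. Outside $[a,b]$ the perturbation vanishes. So $q+\varepsilon\psi\in\mathcal X_c$, it keeps the same behavior at $\pm\infty$, and therefore $q+\varepsilon\psi\in\Gamma_c$. Minimality then gives $J_c(q+\varepsilon\psi)\ge J_c(q)=m_c$ for $|\varepsilon|<\varepsilon_0$.

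The difference $J_c(q+\varepsilon\psi)-J_c(q)$ involves only integrals over $[a,b]$. It therefore suffices to show that $\varepsilon\mapsto J_{c,(a,b)}(q+\varepsilon\psi)$ is differentiable at $\varepsilon=0$ with the expected derivative; since $\varepsilon=0$ is an interior minimum, that derivative vanishes.

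The potential term is immediate. $V\in C^1$, and $q,\psi$ are bounded, so $\nabla V$ is bounded on the relevant compact set. Dominated convergence gives the derivative $\int_a^b\nabla V(q)\cdot\psi\,dt$.

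The kinetic term is the only real obstacle, because $\phi$ may be singular or degenerate at $0$. I will use that $z\mapsto\Phi(|z|)$ is $C^1$ on $\mathbb R^N$ with gradient $g(z)=\phi(|z|)z$. Away from the origin this is the chain rule. At the origin $g$ is continuous with $g(0)=0$ by Lemma \ref{lem:ginvC1}, and $\Phi(|z|)=o(|z|)$. By the mean value theorem and convexity, for $|\varepsilon|\le1$,
\[
\Big|\tfrac{\Phi(|\dot q+\varepsilon\dot\psi|)-\Phi(|\dot q|)}{\varepsilon}\Big|\le \sup_{|\theta|\le1}|g(\dot q+\theta\dot\psi)|\,|\dot\psi| .
\]
Since $|g(z)|=\phi(|z|)|z|$ is nondecreasing in $|z|$ by $(\phi_1)$, and $\dot\psi$ is bounded, the right-hand side is at most $\|\dot\psi\|_\infty\,\phi(s)s$ evaluated at $s=|\dot q|+\|\dot\psi\|_\infty$. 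By \eqref{eq:phi-bound} and the $\Delta_2$ property \eqref{eq:omogeneity}, \eqref{eq:convexity}, this is bounded by
\[
C\bigl(\Phi(|\dot q|)+\Phi(\|\dot\psi\|_\infty)+1\bigr),
\]
which is integrable on $[a,b]$ because $J_c(q)<\infty$. Dominated convergence then yields the derivative $\int_a^b\phi(|\dot q|)\dot q\cdot\dot\psi\,dt$. Here $\phi(|\dot q|)\dot q$ is read as $0$ where $\dot q=0$.

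Combining the two terms gives the weak equation for every test function $\psi$. In particular, $\phi(|\dot q|)\dot q$ has weak derivative $\nabla V(q)$ on $(\alpha_q,\omega_q)$. This proves Lemma \ref{l9}.
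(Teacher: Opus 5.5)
Your proposal is correct and follows essentially the same route as the paper: admissibility of $q+\varepsilon\psi$ in $\Gamma_c$ from $V(q)>c$ on the compact support of $\psi$, then differentiation of $J_c$ along $\psi$ via the mean value theorem and dominated convergence, with a majorant built from \eqref{eq:phi-bound} and the $\Delta_2$-type estimates. The differences are only cosmetic: you use a two-sided variation obtained by uniform continuity, where the paper uses a one-sided variation and then repeats it with $-\psi$, and you dominate the difference quotient using the monotonicity of $s\mapsto\phi(s)s$ rather than \eqref{eq:convexity} at the intermediate point.
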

\begin{proof}
Let $\psi \in C_0^\infty(\mathbb{R};\mathbb{R}^N)$ be a function with compact support contained in $(\alpha_q, \omega_q)$. Let $\operatorname{supp}\psi\subset [a, b] \subset (\alpha_q, \omega_q)$. We first show that 
\begin{equation}\label{eq:claim}\text{there exists  }h_\psi \in (0,1) \text{ such that }
\min_{t \in [a, b]} \left(V(q(t) + h\psi(t))\right) > c \quad \text{for any } h \in (0, h_\psi).
\end{equation}
To prove \eqref{eq:claim}, assume by contradiction that 
for any $\epsilon>0$ there are $\delta\in (0,\epsilon)$ and $t_\epsilon\in [a,b]$ such that 
$V(q(t_\epsilon) + \delta\psi(t_\epsilon))\leq c$. 
Choosing a sequence $\epsilon_{n} \to 0^+$ gives corresponding sequences $\delta_{n}\to 0^{+}$ and $(t_{n})\subset [a,b]$ such that $V(q(t_n) + \delta_{n}\psi(t_n))\leq c$. By compactness, along a subsequence, $t_{n}\to t_{0}\in[a,b]$, and the continuity of $V$ and $q$ yields, along this subsequence, $
 V(q(t_0))\leq c,
$
contradicting (a) of Lemma \ref{l8} and so proving \eqref{eq:claim}.

 By \eqref{eq:claim}, it follows that $q + h\psi \in \Gamma_c$ for all $h\in (0,h_\psi)$. Since $q$ minimizes $J_c$ over $\Gamma_c$, we have
\begin{equation}\label{18}
	J_c(q + h\psi) - J_c(q) \geq 0 \quad \text{for any } h \in (0, h_\psi).
\end{equation}
We prove, by Lebesgue's Theorem, that
\begin{equation}\label{19}
	\lim_{h \to 0^+}\frac{J_{c,(\alpha_q, \omega_q)}(q + h\psi) - J_{c,(\alpha_q, \omega_q)}(q)}{h}
	=\int_{\alpha_q}^{\omega_q}\left(\phi(|\dot{q}(t)|)\dot{q}(t)\dot{\psi}(t)+\nabla V(q(t))\psi(t)\right)dt.
\end{equation}
To prove \eqref{19}, we write
\[
\frac{ J_{c,(\alpha_q, \omega_q)}(q + h\psi)
      - J_{c,(\alpha_q, \omega_q)}(q)}{h}
=
\int_{\alpha_q}^{\omega_q}
\frac{\Phi(|\dot q + h\dot\psi|)-\Phi(|\dot q|)}{h}\,dt
+
\int_{\alpha_q}^{\omega_q}
\frac{V(q + h\psi)-V(q)}{h}\,dt .
\]
Since $V\in C^1(\mathbb R^N;\R)$ and $\psi$ has compact support in
$(\alpha_q,\omega_q)$, it is standard that
\[
\lim_{h\to0^+}
\int_{\alpha_q}^{\omega_q}
\frac{V(q + h\psi)-V(q)}{h}\,dt
=
\int_{\alpha_q}^{\omega_q}
\nabla V(q(t))\,\psi(t)\,dt .
\]
It therefore remains to justify that
\[
\lim_{h\to0^+}
\int_{\alpha_q}^{\omega_q}
\frac{\Phi(|\dot q + h\dot\psi|)-\Phi(|\dot q|)}{h}\,dt
=
\int_{\alpha_q}^{\omega_q}
\phi(|\dot q(t)|)\,\dot q(t)\dot\psi(t)\,dt .
\]
For a.e.\ $t\in(\alpha_q,\omega_q)$ the function
$
f_t(h)=\Phi(|\dot q(t)+h\dot\psi(t)|)
$
is of class $C^1(\mathbb R;\R)$, with derivative
\[
\frac{d}{dh}f_t(h)
=
\phi(|\dot q(t)+h\dot\psi(t)|)\,
\big(\dot q(t)+h\dot\psi(t)\big)\cdot\dot\psi(t).
\]
By the Mean Value Theorem, for every $h\in(0,h_{\psi})$ and for a.e.\ $t\in(\alpha_q,\omega_q)$ there exists
$\eta_h=\eta_h(t)$ with $0<\eta_h(t)<h$ such that
\begin{equation}\label{eq:stimadom1}
\big|
\frac{\Phi(|\dot q+h\dot\psi|)-\Phi(|\dot q|)}{h}
\big|=\big|\frac{f_t(h)-f_t(0)}{h}\big|
=\phi(|\dot q+\eta_h\dot\psi|)\,
|\dot q+\eta_h\dot\psi|\,
|\dot\psi|
\quad \text{for a.e. } t\in(\alpha_q,\omega_q).
\end{equation}
%
By \eqref{eq:phi-bound}  there exists a constant $C_0>0$ such that, for all $s\ge0$,
\begin{equation}\label{eq:phisbound}
\phi(s)s \le m\Phi(s)+C_0.
\end{equation}
Since $\eta_h<h_{\psi}< 1$, by \eqref{eq:convexity} we have furthermore
\begin{equation}\label{eq:Phi-bound}
\Phi(|\dot q+\eta_h\dot\psi|)
\le
2^{m-1}(\Phi(|\dot q|)+\Phi(|\dot\psi|)).
\end{equation}
Combining \eqref{eq:stimadom1}, \eqref{eq:phisbound} and \eqref{eq:Phi-bound}
we obtain that there exists $C>0$ such that
\[
\big|
\frac{\Phi(|\dot q+h\dot\psi|)-\Phi(|\dot q|)}{h}
\big|
\le
C(\Phi(|\dot q|)+\Phi(|\dot\psi|)+1)\,|\dot\psi|\text{ for any }h\in (0,h_{\psi}).
\]
Since $\Phi(|\dot q|)\in L^1((\alpha_q,\omega_q);\R)$ and 
$\psi\in C_0^\infty((\alpha_q,\omega_q);\R^N)$, the right hand side belongs to $L^1((\alpha_q,\omega_q);\R)$, being independent of $h$.
Therefore, by the Dominated Convergence Theorem,
\[
\lim_{h\to0^+}
\int_{\alpha_q}^{\omega_q}\frac{\Phi(|\dot q+h\dot\psi|)-\Phi(|\dot q|)}{h}\,dt =
\int_{\alpha_q}^{\omega_q}
\phi(|\dot q(t)|)\,\dot q(t)\dot\psi(t)\,dt.
\]
and \eqref{19} follows.

Now we observe that, since $\psi$ has compact support in $(\alpha_{q},\omega_{q})$, by \eqref{18}
$$
J_c(q + h\psi) - J_c(q)=J_{c,(\alpha_q, \omega_q)}(q + h\psi) - J_{c,(\alpha_q, \omega_q)}(q)\geq 0\text{ for all }h\in (0,h_{\psi}),
$$
and \eqref{19} yields
$$
\int_{\mathbb{R}}\left(\phi(|\dot{q}(t)|)\dot{q}(t)\dot{\psi}(t)+\nabla V(q(t))\psi(t)\right)dt\geq 0.
$$
Applying the same reasoning for the test function $-\psi$, we get  the reversed inequality
$$
\int_{\mathbb{R}} \left( \phi(|\dot{q}(t)|)\dot{q}(t) \dot{\psi}(t) + \nabla V(q(t)) \psi(t) \right) dt \leq 0,
$$
which allows us to conclude
$$
\int_{\mathbb{R}} \left( \phi(|\dot{q}(t)|)\dot{q}(t) \dot{\psi}(t) + \nabla V(q(t)) \psi(t) \right) dt = 0.
$$
Since $\psi \in C_0^\infty((\alpha_q,\omega_q);\mathbb{R}^N)$ is arbitrary, $q$ is a weak solution of \eqref{S} on $(\alpha_q,\omega_q)$.
\end{proof}


Starting from the solutions provided by Lemma \ref{l9}, our aim is to recover entire connecting-orbit weak solutions of \eqref{S}, possibly by employing reflection and/or periodic continuation when the contact times are finite. A key step in this construction is to show that these solutions have constant energy equal to $-c$ on the interval $(\alpha_{q},\omega_{q})$.\medskip


To be more detailed, consider the Lagrangian associate with \eqref{S}, 
$
L(q,\dot q)=\Phi(|\dot q|)+V(q)$.
The conjugate momentum is \(p=\nabla_{\dot q}L=\phi(|\dot q|)\,\dot q\) and the Hamiltonian is given by 
$H(p,q)=\widetilde{\Phi}(|p|)-V(q)$, 
where \(\widetilde{\Phi}\) is the Legendre transform, i.e., the complementary function, of \(\Phi\). Since $\Phi$ is convex, \(\widetilde{\Phi}(y)=ys-\Phi(s)\) for \(y=\Phi'(s)=\phi(s)s\). Hence, along the trajectory \(q(t)\) 
\[
\widetilde{\Phi}(|p(t)|)=\widetilde{\Phi}(\phi(|\dot q(t)|)\,|\dot q(t)|)=\phi(|\dot q(t)|)\,|\dot q(t)|^2-\Phi(|\dot q(t)|).
\] 
 This allows us to define the energy function along trajectories as 
\[
E_{q}(t)=H(p(t),q(t))=G(|\dot q(t)|)-V(q(t))\]
where 
 \[G(s)=\phi(s)\,s^2-\Phi(s).
\] 
%
%
We remark that, by \eqref{eq:Phi-growth}, $G$ satisfies 
\begin{equation}\label{ineq}
(l - 1)\Phi(s) \leq G(s)\leq (m-1)\Phi(s)\quad\text{for all}\quad s\geq0,
\end{equation}
and $G(s)=0$ if and only if $s=0$.

Note that the conservation of energy along the trajectories $q$ given by Lemma \ref{l9} cannot be deduced from the classical arguments of the calculus of variations. Indeed, our assumptions do not guarantee that the second derivative of the Lagrangian with respect to $\dot q$, $D^2_{\dot q}L(q,\dot q) = D^2_{\dot q}\Phi(|\dot q|)$, is uniformly positive definite, nor are the functions $q$ classical $C^2$ solutions of the system. Our variational framework, however, allows us to establish both a weak form of the conservation of energy and its exact value by considering suitable variations of the action along dilations of $q$.
 
\begin{lemma}\label{l12}
	For every $q \in \mathcal{M}_c$, it turns out that $E_q(t)=-c$ for a.e. $t\in(\alpha_q, \omega_q)$.   
\end{lemma}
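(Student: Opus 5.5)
The plan is to use inner variations, namely reparametrizations of time, instead of variations of $q$ itself. These stay admissible in $\Gamma_c$ without any regularity of $q$, because they do not change the image $q(\mathbb R)$. I will first localize, then derive an integral identity from a one-parameter family of time changes, and finally convert that identity into a pointwise statement by a du Bois-Reymond type argument.

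\textbf{Step 1: the time-change family.} Fix $[a,b]\subset(\alpha_q,\omega_q)$ and $\eta\in C_0^\infty((a,b);\mathbb R)$. For $|\lambda|$ small, let $\theta_\lambda(t)=t+\lambda\eta(t)$, which is a $C^\infty$ diffeomorphism of $\mathbb R$ equal to the identity outside $[a,b]$. Set $q_\lambda=q\circ\theta_\lambda^{-1}$. Then $q_\lambda$ has the same image as $q$ and coincides with $q$ outside $[a,b]$, so $q_\lambda\in\Gamma_c$ and $J_c(q_\lambda)\ge m_c=J_c(q)$. Changing variables $t=\theta_\lambda(s)$ gives
\[
J_{c,(a,b)}(q_\lambda)=\int_a^b\Big(\Phi\big(|\dot q(s)|/(1+\lambda\dot\eta(s))\big)+V(q(s))-c\Big)(1+\lambda\dot\eta(s))\,ds .
\]

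\textbf{Step 2: differentiating at $\lambda=0$.} I will differentiate this expression at $\lambda=0$ under the integral sign. Pointwise, the derivative of the integrand is
\[
\dot\eta\big(-\phi(|\dot q|)|\dot q|^2+\Phi(|\dot q|)+V(q)-c\big)=-\dot\eta\,(E_q+c).
\]
For domination I use \eqref{eq:omogeneity}: for $|\lambda|\,\|\dot\eta\|_\infty\le1/2$, the quotient $\Phi(|\dot q|/(1+\lambda\dot\eta))$ is bounded by $2^m\Phi(|\dot q|)$. The $\lambda$-derivative is bounded via \eqref{eq:Phi-growth} by $C\,\Phi(|\dot q|)\,|\dot\eta|$, plus the bounded term $|V(q)-c|\,|\dot\eta|$. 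This is integrable on $[a,b]$ because $J_c(q)<\infty$ and $q$ is bounded.

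Since $\lambda=0$ is an interior minimum of a two-sided differentiable function, the derivative vanishes:
\[
\int_a^b\big(G(|\dot q|)-V(q)+c\big)\,\dot\eta\,dt=0\qquad\forall\,\eta\in C_0^\infty((a,b)).
\]

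\textbf{Step 3: from the integral identity to a constant.} The function $E_q+c$ lies in $L^1(a,b)$ by \eqref{ineq}. By the du Bois-Reymond lemma it follows that $E_q\equiv k$ a.e. on $(\alpha_q,\omega_q)$ for some constant $k$, since the intervals $[a,b]$ exhaust $(\alpha_q,\omega_q)$ and overlapping ones give the same constant.

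\textbf{Step 4: identifying $k=-c$.} This is the main obstacle. The weak Euler--Lagrange equation of Lemma \ref{l9} alone cannot fix the value of the constant, so I will use a global dilation that moves the contact times. Take $q_\lambda(t)=q(t/\lambda)$ for $\lambda>0$ near $1$, suitably recentered; this remains in $\Gamma_c$, and by Lemma \ref{l8}(b),(c) it is constant outside the dilated interval. Then
\[
J_c(q_\lambda)=\lambda\int_{\alpha_q}^{\omega_q}\Big(\Phi\big(|\dot q|/\lambda\big)+V(q)-c\Big)\,ds ,
\]
and differentiating at $\lambda=1$ (the same domination as in Step 2 applies) gives
\[
\int_{\alpha_q}^{\omega_q}(E_q+c)\,ds=0 .
\]
If $\omega_q-\alpha_q<\infty$, this forces $k+c=0$.

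If the interval is infinite, the identity instead forces $k=-c$ because $E_q+c\equiv k+c$ has to be integrable on an infinite interval. To see integrability: $G(|\dot q|)$ is integrable by \eqref{ineq}, and $V(q)-c\ge0$ is integrable since $J_c(q)<\infty$, so $|E_q+c|\le G(|\dot q|)+V(q)-c\in L^1$. A constant that is integrable on an infinite interval must be $0$, which again gives $k=-c$. This infinite case needs no dilation at all; the dilation argument is needed only when both contact times are finite. In that case one must check that the dilated path still reaches $\mathcal V_c^\pm$, which holds because the image is unchanged.
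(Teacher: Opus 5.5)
Your proof is correct, but it is organized differently from the paper's.

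\textbf{Your route.} You use two separate mechanisms.
\begin{enumerate}
\item Compactly supported reparametrizations $t\mapsto t+\lambda\eta(t)$, combined with the du Bois-Reymond lemma, give that $E_q$ is constant on $(\alpha_q,\omega_q)$. This step uses neither the weak equation nor any property of the contact times.
\item The constant is then identified separately.
\begin{itemize}
\item If $\omega_q-\alpha_q=+\infty$, you use integrability: $|E_q+c|\le G(|\dot q|)+V(q)-c\in L^1$ since $G\le (m-1)\Phi$.
\item If both contact times are finite, you use the global dilation $q(\cdot/\lambda)$.
\end{itemize}
\end{enumerate}

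\textbf{The paper's route.} The paper uses a single one-parameter family. It translates an arbitrary $r\in(\alpha_q,\omega_q)$ to the origin and dilates only the half-line to its right, setting $q_s(t)=\tilde q(t/s)$ for $t>0$. Minimality then gives the tail identity
\[
\int_r^{\omega_q}(V(q)-c)\,dt=\int_r^{\omega_q}G(|\dot q|)\,dt \quad \text{for every } r.
\]
Subtracting two such identities and applying the Lebesgue differentiation theorem yields $E_q=-c$ almost everywhere directly. This needs no du Bois-Reymond lemma and no case split between finite and infinite contact times; the tail integrals are finite because $J_c(q)=m_c<\infty$.

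\textbf{Comparison.} The paper's argument is shorter. Yours costs a case distinction but makes the structure transparent. Constancy of the energy appears as a purely local consequence of the invariance of $\Gamma_c$ under reparametrizations of time. The value $-c$ appears as a transversality condition, coming either from the freedom to move the finite contact times or from finiteness of the action on an unbounded interval.

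\textbf{Two small remarks.}
\begin{itemize}
\item The recentering in Step 4 is unnecessary. $q(\cdot/\lambda)\in\Gamma_c$ for every $\lambda>0$, since it has the same image and the same behavior as $t\to\pm\infty$.
\item In the infinite case it is integrability alone, not the dilation identity, that forces $k=-c$. Your final sentence says this, but your earlier phrase ``the identity instead forces'' suggests otherwise.
\end{itemize}
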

\begin{proof}
Let $q \in \mathcal{M}_c$. We begin by proving that for any $r,h \in (\alpha_q, \omega_q)$ with $r < h$, it holds 
\begin{equation}\label{NEQW}
	\int_{r}^{h}(V(q(t)) - c)\, dt = \int_{r}^{h}G(|\dot{q}(t)|)\, dt.
\end{equation}
We consider the translated function $\tilde q=q(\cdot + r)$ observing that $\tilde q\in \Gamma_{c}$ and $J_{c}(\tilde q)=m_{c}$.
For any $s > 0$ we define the function
$$
q_s(t) = 
\begin{cases} 
	\tilde q(t), & t \leq 0, \\
	\tilde q\left(\frac{t}{s}\right), & t > 0.
\end{cases}
$$
By construction, 
$
q_s \in \Gamma_c$, $\alpha_{q_s} = \alpha_{\tilde q}$ and $\omega_{q_s} = s \omega_{\tilde q}$. 
Then by \eqref{20}
$$
J_{c,(\alpha_{\tilde q}, s\omega_{\tilde q})}(q_s) \geq m_c = J_{c,(\alpha_{\tilde q}, \omega_{\tilde q})}(\tilde q).
$$
This inequality implies
$$
0 \leq \int_0^{s\omega_{\tilde q}}( \Phi(|\dot{q_s}(t)|) + (V(q_s(t)) - c))\, dt - \int_0^{\omega_{\tilde q}}( \Phi(|\dot{{\tilde q}}(t)|) + (V({\tilde q}(t)) - c))\, dt.
$$
Since $\dot{q_s}(t) = \frac{1}{s} \dot{{\tilde q}}(\frac{t}{s})$ for $t>0$, making the change of variable $x = \frac{t}{s}$, we have
$$
0 \leq \int_0^{\omega_{\tilde q}} \left( s\Phi(\tfrac{1}{s}|\dot{{\tilde q}}(t)|) - \Phi(|\dot{{\tilde q}}(t)|)  + (s - 1)(V({\tilde q}(t)) - c)\right)\, dt.
$$
Denoting by $f(s)$ the right hand side of the above inequality 
we have $f(s) \geq 0$ for all $s > 0$ and $f(1) = 0$ so that $1$ is a minimum of $f$. We claim that $f$ is differentiable at $s=1$ with 
\begin{equation}\label{eq:claimfprimo}
	f'(1)=\int_0^{\omega_{\tilde q}}\left(\Phi(|\dot {\tilde q}|)-\phi(|\dot {\tilde q}|)|\dot {\tilde q}|^2+(V({\tilde q})-c)\right)dt.
\end{equation}
For $s\neq 1$ we  write the incremental quotient
\[
\frac{f(s)}{s-1}
=\int_0^{\omega_{\tilde q}}\big(
\frac{s\,\Phi(\tfrac{1}{s}|\dot {\tilde q}|)-\Phi(|\dot {\tilde q}|)}{s-1}
+(V({\tilde q})-c)\big)dt.
\]
We prove that there exists $C>0$ such that for every $a\ge 0$ and every
$s\in[1/2,2]$,
\begin{equation}\label{eq:domination}
\big|\frac{s\,\Phi(a/s)-\Phi(a)}{s-1}\big|\le C\,\Phi(a).
\end{equation}
If $a=0$ the inequality is trivial. Let $a>0$ and set
$F(s)=s\,\Phi(a/s)$ for $s>0$. Then $F\in C^1((0,+\infty);\R)$ and
$F'(s)=\Phi(a/s)-\tfrac{1}{s^2}\phi(a/s)a^2$.
By the mean value theorem, for every $s\neq 1$ there exists
$\theta=\theta(s)$ such that
\[
\frac{F(s)-F(1)}{s-1}
=F'(\theta)=\Phi(a/\theta)-\frac{1}{\theta^2}\phi(a/\theta)a^2.
\]
If $s\in[1/2,2]$, then also $\theta\in[1/2,2]$.  
By \eqref{eq:Phi-growth} we have
$\phi(a/\theta)(a/\theta)^2\le m\,\Phi(a/\theta)$, and so
since
$\theta\in[1/2,2]$,  
\eqref{eq:omogeneity} gives
\[
\big|\Phi(a/\theta)-\frac{1}{\theta^2}\phi(a/\theta)a^2\big|
\le (1+m)\Phi(a/\theta)
\le (1+m)2^{m}\,\Phi(a),
\]
which proves \eqref{eq:domination}.

For $a=|\dot {\tilde q}(t)|$, 
\eqref{eq:domination} yields that for any $s\in[1/2,2]$
\[\big|\frac{s\,\Phi(|\dot {\tilde q}(t)|/s)-\Phi(|\dot {\tilde q}(t)|)}{s-1}\big|\le C\,\Phi(|\dot {\tilde q}|),
\text{ for a.e. }t\in(0,\omega_{\tilde q}).\]
Since $J_c(q)=m_c<\infty$, we have $\Phi(|\dot {\tilde q}|)\in L^1((0,\omega_{\tilde q});\R)$ and also
$V({\tilde q})-c\in L^1((0,\omega_{\tilde q});\R)$. Then, by dominated convergence 
\[
\lim_{s\to 1}\frac{f(s)}{s-1}= \int_0^{\omega_{\tilde q}}\Big(\Phi(|\dot {\tilde q}|)-\phi(|\dot {\tilde q}|)|\dot {\tilde q}|^2+(V({\tilde q})-c)\Big)dt
\]
proving \eqref{eq:claimfprimo}. Since $1$ is a minimum point of $f$ we have $f'(1) = 0$ and so 
\[
\int_0^{\omega_{\tilde q}}\big(V({\tilde q}(t))-c\big)\,dt=\int_0^{\omega_{\tilde q}}\big(\phi(|\dot {\tilde q}(t)|)|\dot {\tilde q}(t)|^2-\Phi(|\dot {\tilde q}(t)|)\big)\,dt
=\int_0^{\omega_{\tilde q}}G(|\dot {\tilde q}(t)|)\,dt,
\]
where the last equality follows from the definition of $G$. 

Returning to the original function $q={\tilde q}(\cdot-r)$, this shows that $
\int_r^{\omega_q} (V(q(t)) - c)\, dt = \int_r^{\omega_q} G(|\dot{q}(t)|)\, dt,
$ for any $r\in (\alpha_q, \omega_q)$, and since $ \int_{r}^{h}=\int_{r}^{\omega_q}-\int_{h}^{\omega_q}$,
\eqref{NEQW}  follows. 

Dividing both sides of the equality in \eqref{NEQW} by $h - r$ gives
$$
\frac{1}{h -r} \int_{r}^{h} (V(q(t)) - c)\, dt = \frac{1}{h - r} \int_{r}^{h} G(|\dot{q}(t)|)\, dt.
$$
Therefore, since both $V(q(t)) - c$ and $G(|\dot{q}(t)|)$ belong to $L^{1}((\alpha_q, \omega_q);\R)$, the Lebesgue point Theorem 
yields
$$
V(q(t_0)) - c = G(|\dot{q}(t_0)|) \text{ for a.e. }t_{0}\in (\alpha_q, \omega_q) .
$$
Consequently, $E_q(t) = -c$ a.e. on $(\alpha_q, \omega_q)$, completing the proof of the lemma.
\end{proof}

 By item (a) of Lemma \ref{l8}, if $q \in \mathcal{M}_c$ then $V(q(t)) > c$ for all $t \in (\alpha_q, \omega_q)$. Hence, Lemma \ref{l12} yields
\[
G(|\dot q(t)|) > 0 \quad \text{a.e. on } (\alpha_q, \omega_q).
\]
Since $G(s)=0$ if and only if $s=0$, we conclude that
\begin{equation*}\label{eq:dotqnot=0}
\dot q(t)\neq 0 \quad \text{for a.e. } t \in (\alpha_q, \omega_q).
\end{equation*}
Thanks to these properties, and since by Lemma \ref{l9} $q$ is a weak solution of \eqref{S} on $(\alpha_q, \omega_q)$, we are in a position to improve the regularity of $q$.  
More precisely, the next lemma shows that, thanks to the local
invertibility of the map $g(x)=\phi(|x|)x$ (see Lemma~\ref{lem:ginvC1}), $q$ is a classical solution of \eqref{S} on $(\alpha_q,\omega_q)$, with constant energy
$E_{q}(t)=-c$ for all $t \in (\alpha_q,\omega_q)$.
\begin{lemma}\label{l233}
Assume \hyperlink{phi1}{$(\phi_1)$} and \hyperlink{phi2}{$(\phi_2)$}. 
If $q \in \mathcal{M}_c$, then 
$q \in C^2\big((\alpha_q,\omega_q); \mathbb{R}^N\big)$ and solves \eqref{S} on $(\alpha_q, \omega_q)$,
with $E_{q}(t)=-c$ for all $t\in (\alpha_q, \omega_q)$.
\end{lemma}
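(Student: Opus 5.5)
The plan is to go from the weak Euler--Lagrange equation of Lemma~\ref{l9} to a pointwise statement for the momentum $p(t)=g(\dot q(t))=\phi(|\dot q(t)|)\dot q(t)$, and then invert $g$ via Lemma~\ref{lem:ginvC1}. Since $q$ is continuous and $V\in C^1$, the function $t\mapsto\nabla V(q(t))$ is continuous on $(\alpha_q,\omega_q)$. Fix $t_0\in(\alpha_q,\omega_q)$ and set
\[
P(t)=\int_{t_0}^t \nabla V(q(s))\,ds ,
\]
so that $P\in C^1$. The weak formulation gives $\int (p-P)\cdot\dot\psi\,dt=0$ for all $\psi\in C_0^\infty((\alpha_q,\omega_q);\mathbb R^N)$. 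By \eqref{eq:phi-bound}, $p\in L^1_{loc}$, so the Du~Bois-Reymond lemma yields $p=P+k$ a.e. for some constant $k\in\mathbb R^N$. Hence $p$ has a $C^1$ representative $\bar p$ with $\bar p'=\nabla V(q)$.

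Next I show that $\bar p$ never vanishes. By Lemma~\ref{l12} and Lemma~\ref{l8}(a), for a.e.\ $t$ we have $G(|\dot q(t)|)=V(q(t))-c>0$. Recall that $|p|=a(|\dot q|)$ with $a$ a strictly increasing bijection of $[0,\infty)$, and that $G(s)=\widetilde\Phi(a(s))$. Then $\widetilde\Phi(|\bar p(t)|)=V(q(t))-c$ for a.e.\ $t$. Both sides are continuous, because $\widetilde\Phi$ is continuous as the complementary $N$-function. Therefore the identity holds for every $t$, and in particular $|\bar p(t)|=\widetilde\Phi^{-1}(V(q(t))-c)>0$ on the whole interval, using that $V(q(t))>c$ everywhere by Lemma~\ref{l8}(a).

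On any compact $[a,b]\subset(\alpha_q,\omega_q)$, $\bar p([a,b])$ is therefore a compact subset of $\mathbb R^N\setminus\{0\}$. By Lemma~\ref{lem:ginvC1}, $g^{-1}$ is $C^1$ there; I would note that $g^{-1}$ is globally well defined, since $g(r\omega)=a(r)\omega$ is a bijection. Hence $\dot q=g^{-1}(\bar p)$ a.e. The right-hand side is $C^1$, so $q\in C^1$ with $\dot q=g^{-1}\circ\bar p$ everywhere, and since $\bar p\in C^1$, $q\in C^2$. Moreover $\phi(|\dot q|)\dot q=\bar p$ is $C^1$ with derivative $\nabla V(q)$, so \eqref{S} holds classically. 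Finally, $E_q(t)=G(|\dot q(t)|)-V(q(t))$ is now continuous and equals $-c$ a.e., hence equals $-c$ everywhere.

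I expect the main obstacle to be the nonvanishing of $\bar p$ at every point rather than just almost everywhere. The continuity argument through $\widetilde\Phi(|\bar p|)=V(q)-c$ handles it. Should the relation $G=\widetilde\Phi\circ a$ need justification, I would instead argue directly: $s\mapsto G(s)$ is continuous and strictly increasing, because $G'(s)=a'(s)s>0$, so $|\dot q|=G^{-1}(V(q)-c)$ a.e. Then $|\bar p|=a(G^{-1}(V(q)-c))$ a.e., both sides are continuous, and so the equality holds everywhere.
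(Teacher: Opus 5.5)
Your proposal is correct and follows essentially the paper's route. You integrate the weak equation to obtain a continuous momentum $g(\dot q)$, use the a.e.\ energy identity of Lemma~\ref{l12} together with continuity to show that it never vanishes, invert $g$ via Lemma~\ref{lem:ginvC1}, and conclude $E_q\equiv -c$ by continuity; your Du~Bois-Reymond step, the pointwise identity $\widetilde\Phi(|\bar p|)=V(q)-c$, and the remark on global injectivity of $g$ simply make explicit what the paper asserts or obtains through a uniform lower bound $|g(\dot q)|\ge\varepsilon$ on compact subintervals.
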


\begin{proof}
Let $q \in \mathcal{M}_c$. 
Recall that by Lemma~\ref{lem:ginvC1} the function $g:\mathbb{R}^N\to\mathbb{R}^N$,
defined by $g(x)=\phi(|x|)x$ for $x\neq0$ and $g(0)=0$,
belongs to $C(\mathbb{R}^N,\mathbb{R}^N)\cap C^1(\mathbb{R}^N\setminus\{0\};\mathbb{R}^N)$. Since
$q$ is a weak solution of \eqref{S}, we have
\[
(g(\dot q(t)))' = \nabla V(q(t)) \quad \text{a.e. on } (\alpha_q,\omega_q).
\]
In particular $g(\dot q(\cdot))$ is absolutely continuous on $(\alpha_q,\omega_q)$ and so, for any $t_0, t \in (\alpha_q,\omega_q)$,
\begin{equation}\label{eq:representation}
g(\dot q(t)) = g(\dot q(t_0)) + \int_{t_0}^t \nabla V(q(s))\, ds.
\end{equation} 

Since by Lemma \ref{l8} we have $V(q(t)) > c$ for all $t \in (\alpha_q, \omega_q)$, by continuity we have that, fixed any interval $[a,b]\subset (\alpha_q,\omega_q)$, there exists $\delta=\delta(a,b)>0$ such that $V(q(t)) > c+\delta$ for all $t \in [a, b]$. Then, by Lemma \ref{l12} we obtain
$
G(|\dot q(t)|) \geq \delta$ a.e. on $[a, b]$. Since $G$ is continuous and $G(s)=0$ if and only if $s=0$ we deduce that there exists $\eta=\eta(\delta)>0$ such that $G(s)\geq\delta$ implies $s\geq \eta$, so that 
\begin{equation}\label{eq:dotqdifferentzero}|\dot q(t)|\geq \eta\text{ a.e. on }[a,b].\end{equation}
Since $g$ is continuous on $\mathbb{R}^N$ and $g(x)=0$ if and only if $x=0$ (by \hyperlink{phi1}{$(\phi_1)$}), from \eqref{eq:dotqdifferentzero} we deduce that there exists $\epsilon=\epsilon(\eta)>0$ such that
\[|g(\dot q(t))|\geq\epsilon\text{ a.e. on }[a,b]\]
and since the function $g(\dot q(t))$ is continuous on $(\alpha_q,\omega_q)$,
\begin{equation}\label{eq:g}|g(\dot q(t))|\geq\epsilon\text{ for all }t\in [a,b].\end{equation}
To conclude the proof, we exploit Lemma~\ref{lem:ginvC1}, which guarantees
that $g$ is a local diffeomorphism on
$\mathbb{R}^N\setminus\{0\}$, and that its inverse $g^{-1}$ is of class $C^1$
on any compact subset of $\mathbb{R}^N\setminus\{0\}$.

Fix any interval $[a,b]\subset(\alpha_q,\omega_q)$.
From \eqref{eq:g} we have
\[
|g(\dot q(t))|\ge \varepsilon \qquad \text{for all } t\in[a,b],
\]
so that, by continuity, $g(\dot q([a,b]))$ is a compact subset of $\mathbb{R}^N\setminus B_\varepsilon(0)$.
Recalling that $g(\dot q(\cdot))$ satisfies
\eqref{eq:representation}, we can define
\[
v(t):=g^{-1}\!\left(g(\dot q(t_0))+\int_{t_0}^t \nabla V(q(s))\,ds\right),
\qquad t\in[a,b],
\]
for some fixed $t_0\in[a,b]$.
By Lemma~\ref{lem:ginvC1}, the inverse map $g^{-1}$ is of class $C^1$
on compact subsets of $\mathbb{R}^N\setminus\{0\}$.
Since, as observed above, $g(\dot q([a,b]))$ is a compact subset of
$\mathbb{R}^N\setminus\{0\}$, it follows that
$v \in C^1([a,b];\mathbb{R}^N)$, and
\[
\dot q(t)=v(t)\qquad \text{for a.e. } t\in[a,b].
\]
Since $q$ is absolutely continuous, it follows that
\[
q(t)=q(t_0)+\int_{t_0}^t v(s)\,ds \qquad \text{for all } t\in[a,b],
\]
and hence $q\in C^2([a,b];\mathbb{R}^N)$.
As $[a,b]\subset(\alpha_q,\omega_q)$ was arbitrary, we conclude that
\[
q\in C^2\big((\alpha_q,\omega_q);\mathbb{R}^N\big),
\]
and therefore $q$ is a classical solution of \eqref{S} on $(\alpha_q,\omega_q)$. By the regularity of $q$, we moreover deduce that the function $E_q(t)$ is continuous on
$(\alpha_q,\omega_q)$. Since $E_q(t)=-c$ a.e. on $(\alpha_q,\omega_q)$, it follows that
$E_q(t)=-c$ for all $t\in(\alpha_q,\omega_q)$, concluding the proof of the lemma.
\end{proof}

We are now in a position to state and prove the main result of this section.
Let us recall the definition of bounded connecting orbit.

\begin{definition}\label{def:minconnection}
Assume that condition \hyperlink{V}{$(\mathcal{V}_c)$}holds for some $c\in\R$.
Let $I=(\alpha,\omega)\subset\R$, with $-\infty\le \alpha<\omega\le +\infty$, and denote by $\bar I$ the closure of $I$.
We say that $q:\bar I\to\R^N$ is a {\sl $c$--connection between  $\mathcal V_c^-$ and $\mathcal V_c^+$ with contact times $\alpha$ and $\omega$} if $q\in L^\infty(I;\R^N)$ and 
\begin{itemize}
\item[(i)] $q\in C^{1}(\bar I;\R^N)\cap C^{2}(I;\R^N)$ and solves \eqref{S} on $I$;

\item[(ii)] $V(q(t))>c$ and $E_q(t)=G(|\dot q(t)|)-V(q(t))=-c$ for all $t\in I$;

\item[(iii)]
$
\lim_{t\to\alpha^+}\dist(q(t),\mathcal V_c^-)=
\lim_{t\to\omega^-}\dist(q(t),\mathcal V_c^+)=0;
$\end{itemize}
We say moreover that $q$ is a {\sl minimal} $c$--connection between  $\mathcal V_c^-$ and $\mathcal V_c^+$ if it also satisfies
\begin{itemize}
\item[(iv)] $J_{c,I}(q)=m_c $.
\end{itemize}

\end{definition}
\begin{remark}\label{R:connectors} Note that if $I=(\alpha,\omega)\subset\R$ and $q:\bar I\to\mathbb R^{N}$ is a $c$--connection between  $\mathcal V_c^-$ and $\mathcal V_c^+$ then it  satisfies $\lim_{t\to\alpha^+}|\dot q(t)|=
\lim_{t\to\omega^-}|\dot q(t)|=0.$

Indeed, by (iii) we have $\lim_{t\to\alpha^+}\dist(q(t),\mathcal V_c^-)=0$.
Hence, for any sequence $t_n \to \alpha^{+}$, we have $\dist(q(t_n), \mathcal{V}_c^-) \to 0$. 
Since $q\in L^\infty(I;\R^N)$ and $\mathcal{V}_c^-$ is closed, up to a subsequence $q(t_n) \to \xi \in \mathcal{V}_c^-$. 
Then $V(q(t_n)) \to V(\xi) \le c$ while $V(q(t_n)) \ge c$, so $V(q(t_n)) \to c$.
By (ii), $G(|\dot q(t_n)|)=V(q(t_n))-c\to 0$, and by \eqref{ineq} this implies $\dot q(t_n)\to 0$. 
Since $(t_n)$ was arbitrary, this yields $\dot q(t)\to 0$ as $t\to\alpha^+$.

An analogous reasoning applies to show also that $\dot q(t)\to 0$ as $t\to\omega^-$.
\end{remark}

The analysis carried out above naturally leads to the following existence theorem.
\begin{theorem}\label{thm:minimal-connection}
Assume \hyperlink{phi1}{$(\phi_1)$}--\hyperlink{phi2}{$(\phi_2)$}. Let $V\in C^1(\R^N;\R)$ and let $c\in\R$ be such that
the conditions \hyperlink{V}{$(\mathcal{V}_c)$} and \hyperlink{J}{$(\mathcal{J}_c)$} hold.
Then \eqref{S} has a minimal c-connection between $\mathcal V_c^-$ and $\mathcal V_c^+$.
\end{theorem}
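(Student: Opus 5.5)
Take $q\in\mathcal M_c$, which is nonempty by Lemma \ref{l7}. Set $\alpha=\alpha_q$, $\omega=\omega_q$ and $I=(\alpha,\omega)$. By Lemma \ref{l8}(d), $-\infty\le\alpha<0<\omega\le+\infty$, so $I$ is a nontrivial open interval. The claim is that the restriction of $q$ to $\bar I$ is a minimal $c$--connection in the sense of Definition \ref{def:minconnection}. The conditions are checked one at a time, mostly by citing earlier results.

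\emph{Boundedness, (ii), (iii) and (iv).} Since $\|q\|_{L^\infty(\R;\R^N)}\le R$, we have $q\in L^\infty(I;\R^N)$. Condition (iii) is the first statement of Lemma \ref{l8}. For (ii), $V(q(t))>c$ on $I$ is Lemma \ref{l8}(a), and $E_q\equiv -c$ on $I$ is Lemma \ref{l233}. Condition (iv) is \eqref{20}, i.e. $J_{c,I}(q)=J_c(q)=m_c$.

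\emph{The regularity (i).} Lemma \ref{l233} gives $q\in C^2(I;\R^N)$ and that $q$ solves \eqref{S} on $I$. What remains is $q\in C^1(\bar I;\R^N)$, that is, $\dot q$ extends continuously up to the finite endpoints of $I$. This is the only step not covered directly by the earlier lemmas.

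Suppose $\alpha\in\R$. The argument of Remark \ref{R:connectors} applies verbatim to our $q$: it uses only (ii), (iii), boundedness and \eqref{ineq}, all of which are already established. Hence $\dot q(t)\to0$ as $t\to\alpha^+$. Also $q$ is continuous at $\alpha$ with $q(\alpha)\in\mathcal V_c^-$ by Lemma \ref{l8}. By the mean value theorem applied componentwise on $[\alpha,\alpha+h]$, the one-sided derivative of $q$ at $\alpha$ exists and equals $0$. Extending $\dot q$ by $0$ at $\alpha$ therefore gives a continuous function on $[\alpha,\omega)$. The endpoint $\omega$ is handled the same way when it is finite.

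If an endpoint is infinite, $\bar I$ is unbounded on that side and $C^1(\bar I)$ only asks for continuity of $\dot q$ on the closure, so nothing further is needed there. If one wants boundedness of $\dot q$ as well, it follows from (ii): $G(|\dot q|)=V(q)-c$ is bounded because $q$ is bounded, and \eqref{ineq} together with the superlinear growth of $\Phi$ then bounds $|\dot q|$.

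This completes the verification of (i)--(iv) and proves the theorem. I expect no real obstacle: the heavy lifting (the weak energy identity and the $C^2$ regularity) was done in Lemmas \ref{l12} and \ref{l233}. The only delicate point is the $C^1$ regularity at finite contact times, which is settled by the vanishing-velocity argument of Remark \ref{R:connectors} above.
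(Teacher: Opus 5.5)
Your proposal is correct and follows the paper's proof essentially step by step. You take $q\in\mathcal M_c$ from Lemma~\ref{l7}, restrict it to $\bar I$ with $I=(\alpha_q,\omega_q)$, read off (ii)--(iv) from Lemmas~\ref{l8} and \ref{l233}, and get $C^1$ regularity up to finite contact times from the vanishing velocity of Remark~\ref{R:connectors}; your explicit observation that this remark's argument uses only (ii), (iii) and boundedness, which avoids any circularity before $q$ is known to be a $c$--connection, is extra care the paper leaves implicit.
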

\begin{proof}
By Lemma~\ref{l7} there exists $q_0\in\mathcal M_c$ with $\|q_{0}\|_{L^{\infty}(\mathbb R;\mathbb R^{N})}\leq R$. Let $\alpha=\alpha_{q_{0}}$ and $\omega=\omega_{q_{0}}$ denote the contact times defined in \eqref{Definition-alpha}--\eqref{Definition-omega}, and set $I=(\alpha,\omega)$. Consider the restriction $q:=q_0|_{\bar I}$. By Lemma~\ref{l233}, $q\in C^2(I;\R^N)$ and solves \eqref{S} on $I$, with $E_q(t)=-c$ for all $t\in I$. By Lemma~\ref{l8} we moreover have $J_{c,I}(q)=m_c$, $V(q(t))>c$ for all $t\in I$, and $\lim_{t\to\alpha^+}\dist(q(t),\mathcal{V}_c^-)=\lim_{t\to\omega^-}\dist(q(t),\mathcal{V}_c^+)=0$. 

We claim that $q$ is a minimal $c$--connection between 
$\mathcal V_c^-$ and $\mathcal V_c^+$. 
To this end, by Definition~\ref{def:minconnection},
Remark~\ref{R:connectors} and the properties already established above, 
it remains to prove that $q\in C^{1}(\bar I;\R^N)$. 

Since $q\in C^2(I;\R^N)$ and $q$ is continuous on $\bar I$, 
it suffices to verify the existence of the one-sided derivatives 
at the the finite endpoints of $I$. 
More precisely, if $\alpha\in\R$ (resp. $\omega\in\R$), 
we have to show that $D_{+}q(\alpha)=0$ 
(resp. $D_{-}q(\omega)=0$).  

If $\alpha\in\R$, the continuity of $q$ at $\alpha$ together with 
$\lim_{t\to\alpha^+}\dot q(t)=0$ implies that the right derivative 
$D_{+}q(\alpha)$ exists and equals $0$. 
The argument in the case $\omega\in\R$ is analogous.
\end{proof}

\subsection{From $c$--connections to entire connecting orbits}\label{Subsec3.3} 
In the previous subsections we proved the existence of minimal $c$--connections
between $\mathcal V_c^-$ and $\mathcal V_c^+$ (Theorem~\ref{thm:minimal-connection})
and established their main structural properties.
In particular, any minimal $c$--connection is a classical solution of \eqref{S}
on its interior interval and has constant energy $-c$.

In this subsection we  prove that every $c$--connection can be extended to an entire
weak solution of \eqref{S} on $\R$.
Depending on whether none, one, or both contact times are finite,
the resulting global solution is respectively of heteroclinic, homoclinic,
or brake type.

\begin{remark}\label{R:nonunique-extension}
In the quasilinear case, the Cauchy problem for \eqref{S} may fail to
be uniquely solvable at points where $\dot q(t_0)=0$ (in particular when the
map $v\mapsto \phi(|v|)v$ is degenerate or singular at $v=0$).
Therefore, global continuations across contact times need not be unique in general.
In what follows we adopt the canonical extension obtained by time--reflection
at each finite contact time (and periodic continuation when both contact times
are finite), which is natural in view of the symmetry induced by the energy
constraint.
\end{remark}

Let $q:\bar I\to\R^N$ be a $c$--connection, where $I=(\alpha,\omega)$
with $-\infty\le \alpha<\omega\le+\infty$.
We shall continue to refer to $\alpha$ and $\omega$ as the contact times of $q$.\medskip 

We first consider the case in which the contact times are infinite,
that is $\alpha=-\infty$ and $\omega=+\infty$.
In this situation the connection is already defined on the whole line and,
by the structural properties of $c$--connections established above,
\[
q\in C^{2}(\R;\R^{N})
\]
is a classical entire solution of \eqref{S} on $\R$ satisfying, by definition and Remark~\ref{R:connectors},
\[
E_q(t)=-c \quad\text{for all }t\in\R, \qquad
V(q(t))>c \quad\text{for all }t\in\R,
\]
\[
\lim_{t\to -\infty}\dist(q(t),\mathcal V_c^-)=0,
\qquad
\lim_{t\to +\infty}\dist(q(t),\mathcal V_c^+)=0,
\qquad
\lim_{t\to\pm\infty}\dot q(t)=0.
\]

We refer to such solutions as {\sl heteroclinic--type},
since they have no finite contact times and connect the two sets
$\mathcal V_c^-$ and $\mathcal V_c^+$ at infinity.


\medskip
We now consider the case in which at least one contact time is finite.
In this situation an entire connecting solution can be obtained by
reflecting the $c$--connection at the finite contact time(s) and,
if necessary, by periodic continuation on $\R$.

More precisely, let $q$ be a $c$--connection with contact times
$\alpha$ and $\omega$.

If $\alpha>-\infty$, we define the reflected map
\begin{equation}\label{NewFunction-}
q_-(t)=
\begin{cases}
q(2\alpha-t) & \text{if } t\in(2\alpha-\omega,\alpha),\\[4pt]
q(t) & \text{if } t\in[\alpha,\omega),
\end{cases}
\end{equation}
which extends $q$ to the left of $\alpha$.

If $\omega<+\infty$, we define analogously
\begin{equation}\label{NewFunction+}
q_+(t)=
\begin{cases}
q(t) & \text{if } t\in(\alpha,\omega],\\[4pt]
q(2\omega-t) & \text{if } t\in(\omega,2\omega-\alpha),
\end{cases}
\end{equation}
which extends $q$ to the right of $\omega$.

The functions $q_\pm$ inherit from $q$ the following properties.

\begin{lemma}\label{l13}
Let $q$ be a $c$--connection with contact times $\alpha$ and $\omega$.

\begin{enumerate}
\item[(a)] If $\alpha>-\infty$, then
\begin{itemize}
\item[(i)] $q_-\in C^{2}((2\alpha-\omega,\omega)\setminus\{\alpha\};\R^N)$ and it solves \eqref{S} on 
$(2\alpha-\omega,\alpha)\cup(\alpha,\omega)$;

\item[(ii)] $q_-\in C^{1}((2\alpha-\omega,\omega);\R^N)$, 
$\dot q_-(\alpha)=0$, $q_-(\alpha)\in\mathcal V_c^{-}$, 
$V(q_-(t))>c$ on $(2\alpha-\omega,\omega)\setminus\{\alpha\}$, and moreover
\[
\dot q_-(t)\to 0 \ \text{and}\ 
\dist(q_-(t),\mathcal V_c^{+})\to 0
\quad \text{as } t\to (2\alpha-\omega)^+ \text{ or } t\to \omega^-;
\]

\item[(iii)] $q_-$ is a weak solution of \eqref{S} with 
$E_{q_-}(t)=-c$ on $(2\alpha-\omega,\omega)$.
\end{itemize}

\item[(b)] If $\omega<+\infty$, then
\begin{itemize}
\item[(i)] $q_+\in C^{2}((\alpha,2\omega-\alpha)\setminus\{\omega\};\R^N)$ and it solves \eqref{S} on 
$(\alpha,\omega)\cup(\omega,2\omega-\alpha)$;

\item[(ii)] $q_+\in C^{1}((\alpha,2\omega-\alpha);\R^N)$,
$\dot q_+(\omega)=0$, $q_+(\omega)\in\mathcal V_c^{+}$,
$V(q_+(t))>c$ on $(\alpha,2\omega-\alpha)\setminus\{\omega\}$, and moreover
\[
\dot q_+(t)\to 0 \ \text{and}\ 
\dist(q_+(t),\mathcal V_c^{-})\to 0
\quad \text{as } t\to \alpha^+ \text{ or } t\to (2\omega-\alpha)^-;
\]

\item[(iii)] $q_+$ is a weak solution of \eqref{S} with 
$E_{q_+}(t)=-c$ on $(\alpha,2\omega-\alpha)$.
\end{itemize}
\end{enumerate}
\end{lemma}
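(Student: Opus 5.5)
It suffices to treat case (a); case (b) follows by the time reversal $t\mapsto -t$, which maps $c$--connections between $\mathcal V_c^-$ and $\mathcal V_c^+$ to $c$--connections between $\mathcal V_c^+$ and $\mathcal V_c^-$ and leaves \eqref{S} and $E_q$ invariant. So assume $\alpha>-\infty$.

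\textbf{Step (i).} On $(\alpha,\omega)$ the claim is the definition of $c$--connection. On $(2\alpha-\omega,\alpha)$ write $q_-(t)=q(s)$ with $s=2\alpha-t\in(\alpha,\omega)$. Then $q_-$ is $C^2$ there, with
\[
\dot q_-(t)=-\dot q(s),\qquad \ddot q_-(t)=\ddot q(s).
\]
Since $g(-x)=-g(x)$, we get
\[
\bigl(g(\dot q_-)\bigr)'(t)=\tfrac{d}{dt}\bigl(-g(\dot q(2\alpha-t))\bigr)=\bigl(g(\dot q)\bigr)'(s)=\nabla V(q(s))=\nabla V(q_-(t)).
\]
Hence $q_-$ solves \eqref{S} on the reflected interval. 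The energy is unchanged because $|\dot q_-(t)|=|\dot q(s)|$.

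\textbf{Step (ii).} Since $q\in C^1(\bar I)$ with one-sided derivative $D_+q(\alpha)=0$ (Remark \ref{R:connectors} and Theorem \ref{thm:minimal-connection}), both one-sided derivatives of $q_-$ at $\alpha$ vanish. The left one is $-D_+q(\alpha)=0$. Together with $\dot q_-(t)\to0$ as $t\to\alpha^\pm$, this gives $q_-\in C^1$ with $\dot q_-(\alpha)=0$.

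Next, $q_-(\alpha)=q(\alpha)\in\mathcal V_c^-$. This holds because $\dist(q(t),\mathcal V_c^-)\to0$ as $t\to\alpha^+$, $q$ is continuous on $\bar I$, and $\mathcal V_c^-$ is closed. The inequality $V>c$ off $\alpha$ follows from (ii) of the definition, applied to both copies. The limits at $2\alpha-\omega$ correspond through $s=2\alpha-t$ to $s\to\omega^-$. There they follow from (iii) of the definition and Remark \ref{R:connectors}, the latter giving $\dot q\to0$ at $\omega^-$ even when $\omega=+\infty$.

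\textbf{Step (iii).} This is the main point: the weak formulation across $\alpha$, where $q_-$ is only $C^1$. I take $\psi\in C_0^\infty((2\alpha-\omega,\omega);\R^N)$. On each side of $\alpha$ the function $q_-$ is classical, so $p:=g(\dot q_-)$ is $C^1$ on each open piece. I integrate by parts separately on $(2\alpha-\omega,\alpha-\varepsilon)$ and $(\alpha+\varepsilon,\omega)$ and let $\varepsilon\to0$. The boundary terms are
\[
p(\alpha\mp\varepsilon)\cdot\psi(\alpha\mp\varepsilon).
\]
They vanish in the limit because $\dot q_-(\alpha\mp\varepsilon)\to0$ and $g$ is continuous with $g(0)=0$. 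The domination needed for the limit comes from $|p|$ being bounded near $\alpha$ and $\nabla V(q_-)$ being bounded on compacts. This yields
\[
\int\bigl(g(\dot q_-)\cdot\dot\psi+\nabla V(q_-)\cdot\psi\bigr)\,dt=0.
\]
Equivalently, $g(\dot q_-)$ is continuous across $\alpha$ and its distributional derivative equals $\nabla V(q_-)$, so $g(\dot q_-)$ is absolutely continuous on the whole interval.

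Finally, $E_{q_-}=-c$ holds on $(2\alpha-\omega,\omega)\setminus\{\alpha\}$ by Step (i). At $\alpha$ we compute directly:
\[
E_{q_-}(\alpha)=G(0)-V(q(\alpha))=-V(q(\alpha)).
\]
Since $q(\alpha)\in\mathcal V_c^-$ gives $V(q(\alpha))\le c$, and $V(q(t))>c$ nearby gives $V(q(\alpha))\ge c$ by continuity, we get $V(q(\alpha))=c$ and therefore $E_{q_-}(\alpha)=-c$.
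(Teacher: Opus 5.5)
Your proof is correct and follows essentially the same route as the paper: reflection preserves \eqref{S} and the energy, the vanishing of $\dot q$ at the contact time gives the $C^1$ gluing at $\alpha$, and integrating by parts separately on each side of $\alpha$ yields the weak formulation, since the boundary terms vanish because $\dot q_-(\alpha\mp\varepsilon)\to 0$. The differences are only cosmetic: you check the reflection invariance explicitly via the oddness of $g$, you reduce (b) to (a) by time reversal, and you evaluate $E_{q_-}(\alpha)$ directly from $V(q(\alpha))=c$ rather than invoking continuity of $E_{q_-}$.
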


\begin{proof}
We prove (a), the argument for (b) is symmetric. 

Suppose that $\alpha > -\infty$. By \eqref{NewFunction-}, $q_-$
coincides with $q$ on $(\alpha,\omega)$ and with its reflection on
$(2\alpha-\omega,\alpha)$, both $C^{2}$ solutions of \eqref{S}.
This proves (i).


Point (ii) follows by symmetry and by the properties of $q$. 
Indeed, since $q$ is continuous at $\alpha$, the same holds for $q_-$. 
Moreover, by Remark~\ref{R:connectors} we have 
$\dot q(t)\to 0$ as $t\to\alpha^{+}$ and, by symmetry,
$\dot q_-(t)\to 0$ as $t\to\alpha^{-}$. 
Hence $\dot q_-(\alpha)=0$ and $q_-\in C^1((2\alpha-\omega,\omega);\R^N)$.

Since $\dist(q(t),\mathcal V_c^-)\to 0$ as $t\to\alpha^{+}$ and
$q$ is continuous at $\alpha$, we obtain $q(\alpha)\in\mathcal V_c^-$. 
Hence $q_-(\alpha)=q(\alpha)\in\mathcal V_c^-$.

By definition of $c$--connection we have $V(q(t))>c$ for all $t\in(\alpha,\omega)$,
and since $q_-$ is obtained by time reflection, we also obtain
$V(q_-(t))>c$ for all $t\in(2\alpha-\omega,\alpha)$.

Finally, by Remark~\ref{R:connectors}
$\dot q(t)\to 0$ and $\dist(q(t),\mathcal V_c^+)\to 0$ as $t\to\omega^{-}$.
Hence, by symmetry of the construction, we obtain 
$\dot q_-(t)\to 0$ and $\dist(q_-(t),\mathcal V_c^+)\to 0$
also as $t\to(2\alpha-\omega)^{+}$ and (ii) is proved.


To prove (iii) we observe that, by (i), $q_-$ is classical solution of \eqref{S} on each of the open interval $(2\alpha_q-\omega_q,\alpha_q)$ and $(\alpha_q,\omega_q)$.  
Then $(\phi(|\dot q_{-}(t)|)\dot q_{-}(t))'=\nabla V(q_{-}(t))$ on $(2\alpha_q-\omega_q,\omega_q)\setminus \{\alpha_{q}\}$. Hence for any test function $\psi\in C_{0}^{\infty}((2\alpha_q - \omega_q,\omega_{q});\R^{N})$ and for $\epsilon>0$ small, integrating by parts on $(2\alpha-\omega,\alpha-\varepsilon)$ we obtain
\begin{align*}0&=\int_{2\alpha_q-\omega_q}^{\alpha-\epsilon}-(\phi(|\dot q_{-}(t)|)\dot q_{-}(t))'\psi(t)+\nabla V(q_{-}(t))\psi(t)\, dt\\
&=\int_{2\alpha-\omega}^{\alpha-\epsilon}\phi(|\dot q_{-}(t)|)\dot q_{-}(t)\dot \psi(t)+\nabla V(q_{-}(t))\psi(t)\, dt+
\phi(|\dot{q}_{-}(\alpha-\epsilon)|)\dot{q}_{-}(\alpha-\epsilon)\psi(\alpha
-\epsilon)
.\end{align*}
Letting $\epsilon\to 0^{+}$, since $\dot q_{-}(t)\to 0$ as $t\to\alpha$, we conclude
$$0=\int_{2\alpha-\omega}^{\alpha}\phi(|\dot q_{-}(t)|)\dot q_{-}(t)\dot \psi(t)+\nabla V(q_{-}(t))\psi(t)\, dt.$$
The same argument on $(\alpha+\varepsilon,\omega)$ yields
$$0=\int_{\alpha}^{\omega}\phi(|\dot q_{-}(t)|)\dot q_{-}(t)\dot \psi(t)+\nabla V(q_{-}(t))\psi(t)\, dt.$$
Summing the two identities, we obtain that for every
$\psi\in C_{0}^{\infty}((2\alpha-\omega,\omega);\R^{N})$
$$0=\int_{2\alpha - \omega_q}^{\omega}\phi(|\dot q_{-}(t)|)\dot q_{-}(t)\dot \psi(t)+\nabla V(q_{-}(t))\psi(t)\, dt.$$
Hence $q_{-}$ is a weak solution of \eqref{S} on $(2\alpha-\omega,\omega)\).

Finally, the energy $E_{q_-}(t)$ is constant and equal to $-c$
on $(\alpha,\omega)$ by the defining properties of a $c$--connection.
By symmetry, the same holds on $(2\alpha-\omega,\alpha)$.
Hence, since $q_-\in C^1((2\alpha-\omega,\omega);\R^N)$,
we conclude that
$
E_{q_-}(t)=-c$ for all $t\in(2\alpha-\omega,\omega)$.
\end{proof}

In particular, if a $c$--connection has exactly one finite contact time,
the corresponding reflected trajectory provides an entire weak solution
of \eqref{S} on $\R$.

More precisely, as a direct consequence of Lemma~\ref{l13}, we obtain the
following result.
%

\begin{proposition}\label{p:homoclinic}
Let $q$ be a $c$--connection with contact times $\alpha$ and $\omega$.

\begin{itemize}

\item[(a)] If $\alpha\in\R$ and $\omega=+\infty$, then  
$q_c \equiv q_- \in C^1(\R;\R^N)\cap C^2(\R\setminus\{\alpha\};\R^N)$  
is an entire weak solution of \eqref{S}, classical on 
$(-\infty,\alpha)$ and $(\alpha,+\infty)$. 
Moreover, $q_c$ has constant energy $E_{q_c}\equiv -c$ and satisfies  
\[
\dot q_c(\alpha)=0,\quad 
q_c(\alpha)\in\mathcal V_c^-,\quad 
V(q_c(t))>c \text{ if } t\neq\alpha,\quad 
q_c(\alpha-t)=q_c(\alpha+t)\text{ on }\R,
\]
together with the asymptotic properties
\[
\lim_{t\to\pm\infty}\dot q_c(t)=0,
\qquad 
\lim_{t\to\pm\infty}\dist(q_c(t),\mathcal V_c^+)=0.
\]

\item[(b)] If $\alpha=-\infty$ and $\omega\in\R$, then  
$q_c \equiv q_+ \in C^1(\R;\R^N)\cap C^2(\R\setminus\{\omega\};\R^N)$  
is an entire weak solution of \eqref{S}, classical on 
$(-\infty,\omega)$ and $(\omega,+\infty)$. 
Moreover, $q_c$ has constant energy $E_{q_c}\equiv -c$ and satisfies
\[
\dot q_c(\omega)=0,\quad 
q_c(\omega)\in\mathcal V_c^+,\quad 
V(q_c(t))>c \text{ if } t\neq\omega,\quad 
q_c(\omega-t)=q_c(\omega+t)\text{ on }\R,
\]
together with the asymptotic properties
\[
\lim_{t\to\pm\infty}\dot q_c(t)=0,
\qquad 
\lim_{t\to\pm\infty}\dist(q_c(t),\mathcal V_c^-)=0.
\]

\end{itemize}
\end{proposition}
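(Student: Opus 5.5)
The plan is to treat case (a); case (b) is obtained by exchanging the roles of $\alpha$ and $\omega$ and of $\mathcal V_c^\pm$, and applying part (b) of Lemma~\ref{l13} instead of part (a). So let $\alpha\in\R$ and $\omega=+\infty$. Then $2\alpha-\omega=-\infty$. The reflected map $q_-$ from \eqref{NewFunction-} is therefore defined on all of $\R$: it equals $q(2\alpha-t)$ for $t<\alpha$ and $q(t)$ for $t\ge\alpha$. We set $q_c:=q_-$.

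First, I would read off the conclusions of Lemma~\ref{l13}(a) with $(2\alpha-\omega,\omega)=\R$.
\begin{itemize}
\item Item (i) gives $q_c\in C^2(\R\setminus\{\alpha\};\R^N)$, and $q_c$ solves \eqref{S} classically on $(-\infty,\alpha)$ and on $(\alpha,+\infty)$.
\item Item (ii) gives $q_c\in C^1(\R;\R^N)$, $\dot q_c(\alpha)=0$, $q_c(\alpha)\in\mathcal V_c^-$, and $V(q_c(t))>c$ for $t\neq\alpha$.
\item Item (iii) gives that $q_c$ is a weak solution of \eqref{S} on all of $\R$, with $E_{q_c}\equiv-c$ on $\R$.
\end{itemize}
One point needs a check. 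The weak-solution argument in Lemma~\ref{l13}(iii) uses test functions with compact support. Since $2\alpha-\omega=-\infty$, any $\psi\in C_0^\infty(\R;\R^N)$ is admissible, and the integrations by parts take place on bounded intervals containing $\operatorname{supp}\psi$. So the argument carries over verbatim.

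Second, the reflection symmetry $q_c(\alpha-t)=q_c(\alpha+t)$ is immediate from the definition. For $t>0$, $q_c(\alpha-t)=q(2\alpha-(\alpha-t))=q(\alpha+t)=q_c(\alpha+t)$. For $t<0$ one applies the same identity with $-t$ in place of $t$, and $t=0$ is trivial.

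Third, for the asymptotics, Remark~\ref{R:connectors} and Definition~\ref{def:minconnection}(iii) with $\omega=+\infty$ give $\dot q(t)\to0$ and $\dist(q(t),\mathcal V_c^+)\to0$ as $t\to+\infty$. Through the symmetry, these limits transfer to $t\to-\infty$, since $q_c(t)=q(2\alpha-t)$ and $\dot q_c(t)=-\dot q(2\alpha-t)$. This is exactly the statement in Lemma~\ref{l13}(a)(ii) as $t\to(2\alpha-\omega)^+=-\infty$.

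The only step needing any care is the first: making sure that Lemma~\ref{l13}, written for a possibly finite interval, specializes correctly when an endpoint is infinite. In particular, the boundary terms in the integration by parts appear only at $\alpha\pm\varepsilon$ and vanish because $\dot q_c(\alpha)=0$; no boundary term at infinity arises, since $\psi$ has compact support. Everything else is a direct transcription.
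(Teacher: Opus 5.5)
Your proposal is correct and follows the paper's route: the paper presents this proposition as a direct consequence of Lemma~\ref{l13}, specialized to $2\alpha-\omega=-\infty$ (resp.\ $2\omega-\alpha=+\infty$), with the symmetry and the asymptotics read off from the reflection and Remark~\ref{R:connectors}, exactly as you do. The paper leaves one point implicit, and you check it explicitly: because $\psi$ has compact support, the integration by parts in Lemma~\ref{l13}(iii) produces no boundary term at infinity.
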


We refer to these solutions as {\sl homoclinic--type} solutions. 
They are obtained by reflection when only one contact time is finite. 
They have constant energy $-c$, are symmetric with respect to the
finite contact time at which they touch one of the sets
$\mathcal V_c^{\pm}$, and are asymptotic to the other set with
vanishing velocity at infinity.

\medskip

We now consider the case in which both contact times are finite. 
In this situation, the reflected solution can be further continued by periodic
extension to an entire periodic weak solution of \eqref{S}. 
More precisely

\begin{proposition}\label{p:brake} 
Let $q$ be a $c$--connection with contact times $\alpha,\omega\in\R$
and $\alpha<\omega$.
Then there exists a $T=2(\omega-\alpha)$--periodic weak solution 
$q_c\in C^1(\R;\R^N)$ to \eqref{S} satisfying the energy identity
\[
E_{q_c}(t)=G(|\dot q_c(t)|)-V(q_c(t))=-c 
\quad \text{for all } t\in\R,
\]
and the following properties:
\begin{enumerate}
\item[(a)] $q_c=q$ on $[\alpha,\omega]$;

\item[(b)] $V(q_c(\alpha))=V(q_c(\omega))=c$ and 
$V(q_c(t))>c$ for every $t\in(\alpha,\omega)$;

\item[(c)] $q_c(\alpha)\in\mathcal V_c^-$,
$q_c(\omega)\in\mathcal V_c^+$, and
$\dot q_c(\alpha)=\dot q_c(\omega)=0$;

\item[(d)] $q_c$ is symmetric with respect to $\alpha$ and $\omega$, namely
\[
q_c(\alpha-t)=q_c(\alpha+t),\qquad
q_c(\omega-t)=q_c(\omega+t)\quad \forall t\in\R .
\]
\end{enumerate}
\end{proposition}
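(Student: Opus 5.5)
We construct $q_c$ by reflecting and then periodically continuing. Set $T=2(\omega-\alpha)$. We define $q_c$ on the fundamental interval $[\alpha,2\omega-\alpha]$ by the reflected map $q_+$ of \eqref{NewFunction+}:
\[
q_c(t)=q(t) \ \text{ for } t\in[\alpha,\omega], \qquad q_c(t)=q(2\omega-t) \ \text{ for } t\in[\omega,2\omega-\alpha].
\]
Note that $q_c(2\omega-\alpha)=q(\alpha)=q_c(\alpha)$, so $q_c$ extends $T$-periodically to all of $\R$ as a continuous function. Property (a) is immediate from the definition. Properties (b) and (c) follow from Definition~\ref{def:minconnection}(ii), Remark~\ref{R:connectors} and Lemma~\ref{l13}(b)(ii). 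In particular, $q(\alpha)\in\mathcal V_c^-$ and $q(\omega)\in\mathcal V_c^+$ by continuity and closedness. Moreover $V(q(\alpha))=c$, since $V\le c$ on $\mathcal V_c$ while $V(q(t))>c$ on $I$.

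Next we establish regularity. By Lemma~\ref{l13}(b), $q_c\in C^1((\alpha,2\omega-\alpha))$ with $\dot q_c(\omega)=0$, and $q_c$ is a weak solution there with $E\equiv-c$. It remains to check the gluing points $\alpha+kT$. Near $t=\alpha$ (mod $T$), the function $q_c$ coincides to the right with $q$ and to the left with $t\mapsto q(2\alpha-t)$, because
\[
q_c(\alpha-s)=q_c(2\omega-\alpha-s)=q(\alpha+s).
\]
Hence near each $\alpha+kT$ the function $q_c$ agrees (after a translation) with the reflected map $q_-$ of \eqref{NewFunction-}, restricted to a neighborhood of $\alpha$. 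Lemma~\ref{l13}(a) then gives $C^1$ regularity, $\dot q_c(\alpha)=0$, the weak equation and $E\equiv-c$ in a neighborhood of $\alpha$.

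Since being a weak solution is a local property, we take an arbitrary test function $\psi\in C_0^\infty(\R;\R^N)$ and split it by a partition of unity subordinate to the open cover by intervals where $q_c$ coincides with a translate of $q_-$ or of $q_+$. The weak formulation holds for each piece, and the pieces sum to give it for $\psi$. Equivalently, one can repeat the integration-by-parts argument of Lemma~\ref{l13}(iii) on each interval between consecutive contact points, using $\dot q_c\to0$ there to kill the boundary terms. The energy identity $E_{q_c}\equiv-c$ on $\R$ follows by periodicity together with continuity of $\dot q_c$.

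Finally, for the symmetry (d): $q_c(\omega-t)=q_c(\omega+t)$ holds for $|t|\le\omega-\alpha$ by construction. For $\alpha$ it holds on the same range by the computation above. Both identities extend to all $t$ by $T$-periodicity, since reflection about $\alpha$ or about $\omega$ maps the translation $t\mapsto t+T$ to $t\mapsto t-T$; alternatively, the composition of the two reflections is translation by $T$.

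The only delicate point is the $C^1$ gluing at the contact points. This is handled entirely by Remark~\ref{R:connectors}: $\dot q\to0$ at both finite contact times makes the one-sided derivatives of the reflected pieces match at zero. Everything else is bookkeeping.
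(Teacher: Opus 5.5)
Your proposal is correct and follows essentially the paper's construction: build $q_+$ on $[\alpha,2\omega-\alpha]$, extend it $T$-periodically, and use $\dot q\to 0$ at the finite contact times (Remark~\ref{R:connectors}) for both the $C^1$ gluing and the weak formulation. The only difference is cosmetic. At the gluing points $\alpha+kT$ you observe that $q_c$ locally coincides with a translate of $q_-$, then invoke Lemma~\ref{l13}(a) and a partition of unity. The paper instead computes the one-sided derivatives at $\alpha$ and $2\omega-\alpha$ directly and integrates by parts on each $A_k$, $B_k$, which is the alternative you also mention.
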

\begin{proof}
Let $q$ be a $c$--connection with $-\infty<\alpha<\omega<+\infty$. 
By Lemma~\ref{l13}, $q_+$ is a weak solution of \eqref{S} on $(\alpha,2\omega-\alpha)$, classical on $(\alpha,\omega)$ and $(\omega,2\omega-\alpha)$. Since $q$ is continuous on $[\alpha,\omega]$, we can extend $q_{+}$ continuously at $\alpha$ and $2\omega-\alpha$ by setting 
$q_{+}(\alpha)=q_{+}(2\omega-\alpha)=q(\alpha)$.
By symmetry and Remark~\ref{R:connectors} we obtain
\begin{equation}\label{eq:d+-}
0=D_{+}q(\alpha)=D_{+}q_+(\alpha)=D_{-}q_+(2\omega-\alpha).
\end{equation}
Since $q_{+}(\alpha)=q_{+}(2\omega-\alpha)$, the periodic extension with period 
$T=2(\omega-\alpha)$,
\[
q_c(t+kT)=q_+(t), \qquad \forall t\in[\alpha,\alpha+T),\ k\in\mathbb Z,
\]
is continuous on $\R$. Moreover, by \eqref{eq:d+-} we have 
$q_c\in C^{1}(\R;\R^{N})$ with 
$\dot q_c(\alpha)=\dot q_c(\omega)=0$.

By construction and Lemma~\ref{l13}, $q_c$ satisfies properties (a)--(d) and the energy identity. It remains to check that $q_c$ is a weak solution of \eqref{S} on $\R$. To this aim we argue as in Lemma~\ref{l13}. 

Let $\psi\in C_0^\infty(\R;\R^N)$.
For any $k\in\mathbb Z$, $q_c$ coincides with a translation of $q_+$ on the interval $[\alpha+kT,2\omega-\alpha+kT]$.
Hence $q_c$ 
is a classical solution on the subintervals $A_{k}=(\alpha+kT,\omega+kT)$ and $B_{k}=(\omega+kT,2\omega-\alpha+kT)$.

Multiplying the equation $(\phi(|\dot q_c|)\dot q_c)'=\nabla V(q_c)$ by $\psi$ and integrating by parts on $A_{k}$ and $B_{k}$, the boundary terms at the endpoints vanish because $\dot q_c=0$ there. Thus, for any $k\in\mathbb Z$
\[
\int_{A_{k}}(\phi(|\dot q_c(t)|)\dot q_c(t)\cdot\dot\psi(t)
+\nabla V(q_c(t))\cdot\psi(t))\,dt=\int_{B_{k}}(\phi(|\dot q_c(t)|)\dot q_c(t)\cdot\dot\psi(t)
+\nabla V(q_c(t))\cdot\psi(t))\,dt=0.
\]
Hence
%
%
we obtain
\[
\int_{\R}(\phi(|\dot q_c(t)|)\dot q_c(t)\cdot\dot\psi(t)
+\nabla V(q_c(t))\cdot\psi(t))\,dt=0,
\]
so that $q_c$ is a weak solution of \eqref{S} on $\R$.
\end{proof}

We refer to these solutions as \emph{brake--type} solutions. 
They are obtained from $c$--connections which have both contact times finite,
by reflection and periodic continuation. 
These solutions are $T$--periodic with $T=2(\omega-\alpha)$ and symmetric with respect to the two contact times, where they satisfy $q_c(\alpha)\in\mathcal V_c^-$ with $\dot q_c(\alpha)=0$ and $q_c(\omega)\in\mathcal V_c^+$ with $\dot q_c(\omega)=0$. 
This results in a periodic, symmetric back-and-forth oscillation between $\mathcal V_c^-$ and $\mathcal V_c^+$, with velocity tending to zero while reaching these sets. 
They have constant energy $-c$, solve \eqref{S} classically outside the contact times, and are $C^1$ weak solutions of \eqref{S} on $\mathbb R$.

\bigskip

We conclude this section by examining additional asymptotic properties of heteroclinic-- and homoclinic--type orbits. 
We have already established that if a $c$--connection has at most one finite contact time, then the corresponding entire solution $q_c$ is either heteroclinic or homoclinic. 
In both cases, it satisfies
\[
\dist(q_c(t),\mathcal V_c)\to 0 \text{ and } \dot q_c(t)\to 0
\quad \text{as } t\to\pm\infty .
\]
Note moreover that, since by definition $q$ is bounded, we also have by construction that $\|q_c\|_{L^\infty(\R;\R^N)}<+\infty$.

We consider the $\alpha$- and $\omega$-limit sets of $q_{c}$, the set of accumulation points of $q_{c}(t)$ respectively as $t\to-\infty$ and  $t\to+\infty$:\begin{align*}
\mathcal{A}_{q_{c}} &:= \{\xi\in\R^{N}\mid \exists (t_{n})\subset\R \text{ such that }t_{n}\to-\infty\text{ and }\xi=\lim_{n\to+\infty}q_{c}(t_{n})\},\\
\Omega_{q_{c}} &:= \{\xi\in\R^{N}\mid \exists (t_{n})\subset\R \text{ such that }t_{n}\to+\infty\text{ and }\xi=\lim_{n\to+\infty}q_{c}(t_{n})\}.
\end{align*}
They are both closed and connected. Moreover, the boundedness of $q_{c}$ ensures that 
they are nonempty. 

Indeed, assume
$(t_n)\subset\R$ with $t_n \to +\infty$. Since $\|q_c\|_{L^\infty(\R;\R^N)}<+\infty$, along a subsequence (denoted again $(t_{n})$), we have that $q_{c}(t_{n})\to \xi$ for some $\xi\in\R^{N}$. By definition $\xi\in\Omega_{q_{c}}$, hence $\Omega_{q_{c}}\neq\emptyset$. 
By the same reasoning, taking $(t_n)\subset\R$ with $t_n \to -\infty$, we obtain that the sequence $q_{c}(t_{n})$ accumulates at some point $\zeta\in\mathcal{A}_{q_{c}}\neq\emptyset$.\medskip

A further property of the limit sets of $q_{c}$ is that
\begin{equation}\label{eq:V=c}\mathcal{A}_{q_{c}}\cup \Omega_{q_{c}}\subset \overline{B_R(0)}\cap\{\xi\in\R^{N}\mid V(\xi)=c\}.\end{equation}
Indeed, let $\xi\in\Omega_{q_{c}}$. By definition there exists $(t_n)\subset\R$ with $t_n \to +\infty$ such that $q_{c}(t_n) \to \xi$ as $n \to +\infty$. Since $|q_{c}(t_{n})|\leq R$ for any $n\in\N$, it follows that $\xi\in\overline{B_R(0)}$. Moreover, since $q_{c}$ is a heteroclinic or a homoclinic type solution, we know that  $ \dist(q_{c}(t_n), \mathcal{V}_c) \to 0$ as $n\to+\infty$. Hence
$\dist(\xi, \mathcal{V}_c) = 0$, so that $\xi \in \mathcal{V}_c$, since $\mathcal{V}_c$ is closed. Finally, $V(q_c(t_n))\ge c$ for all $n$, and by continuity of $V$ we have $V(q_c(t_n))\to V(\xi)$. Since $\xi\in\mathcal V_c$, we have $V(\xi)\le c$, and therefore $V(\xi)=c$. 
Thus $\Omega_{q_{c}}\subset \overline{B_R(0)}\cap\{\xi\in\R^{N}\mid V(\xi)=c\}$. The same argument applies to $\mathcal A_{q_c}$, proving \eqref{eq:V=c}.\medskip

A final property of the limit sets of $q_{c}$ is that they consist of critical points of $V$
\begin{equation}\label{eq:V'=0}\mathcal{A}_{q_{c}}\cup \Omega_{q_{c}}\subset \{\xi\in\R^{N}\mid \nabla V(\xi)=0\}\end{equation}
Indeed, let $\xi\in\Omega_{q_{c}}$ and $(t_n)\subset\R$ be such that $t_n \to +\infty$ and $q_{c}(t_n) \to \xi$ as $n \to +\infty$. Fix $t_{0}>0$. Since $q_{c}$ satisfies $\dot q_{c}(t)\to 0$ as $t\to\pm\infty$,  we have that
$$
\lim_{n \to +\infty} q_{c}(t_{n}+s) = \lim_{n \to +\infty} q_{c}(t_n) + \lim_{n \to +\infty} \int_{t_n}^{t_{n}+s} \dot{q_{c}}(t) \, dt = \xi\text{ uniformly in  }s\in [0,t_{0}].
$$  
Hence, by continuity,  $\nabla V(q_{c}(t_n + s)) \to \nabla V(\xi)$ uniformly in $s\in [0,t_{0}]$. Since $q_{c}$ is a classical solution of \eqref{S} outside the possible contact point (which is unique in the homoclinic--type case and absent in the heteroclinic--type case), we derive
\begin{equation}\label{eq:inter}
\left(\phi(|\dot{q_{c}}(t_n + s)|)\dot{q_{c}}(t_n + s)\right)'=\nabla V(q_{c}(t_{n}+s)) 
\text{  for any }s\in [0,t_{0}].
\end{equation}
As $\phi(|u|)u\to 0$ for $u\to 0$ and $\dot{q_{c}}(t) \to 0$ as $t \to +\infty$,  we have moreover
\[\phi(|\dot{q_{c}}(t_n + t_{0})|)\dot{q_{c}}(t_n + t_{0}) - \phi(|\dot{q_{c}}(t_n)|)\dot{q_{c}}(t_n)\to 0\text{ as }n\to+\infty.\]
Since
$$
\phi(|\dot{q_{c}}(t_n + t_{0})|)\dot{q_{c}}(t_n + t_{0}) - \phi(|\dot{q_{c}}(t_n)|)\dot{q_{c}}(t_n) = \int_{t_n}^{t_n + t_{0}} \left(\phi(|\dot{q_{c}}(s)|)\dot{q_{c}}(s)\right)' \, ds,
$$  
by integrating both sides of \eqref{eq:inter} with respect to $s$  on the interval $[0,t_{0}]$, we conclude that 
$$t_{0}\nabla V(\xi)=\lim_{n\to+\infty}\int_{t_n}^{t_n + t_{0}} \left(\phi(|\dot{q_{c}}(s)|)\dot{q_{c}}(s)\right)' \, ds=0.$$
Since $t_{0} > 0$, it follows that $\nabla V(\xi) = 0$. The same argument applies to show the inclusion also for 
$\mathcal{A}_{q_{c}}$, proving \eqref{eq:V'=0}.

\begin{remark}\label{R:cregular} As we have seen in this section, for any $c\in\R$ for which \hyperlink{V}{($\mathcal{V}_c$)} and \hyperlink{J}{$(\mathcal{J}_c)$} hold true, there exists an entire connecting solution $q_{c}$ with energy $-c$, which may exhibit different behaviours depending on whether it is of  brake, homoclinic or heteroclinic type.
From \eqref{eq:V=c} and \eqref{eq:V'=0} we deduce that $q_{c}$ can be of heteroclinic or  homoclinic type solution only if $c$ is a critical value of $V$. When $c$ is a regular value of $V$, then $q_{c}$ is necessarily a brake orbit solution.
\end{remark}

\begin{remark}\label{r:regolaritàglobale} As discussed above, the solutions $q_{c}$ may lose $C^{2}$ regularity at the contact
times, where $\dot q_{c}(t)=0$. This happens because the general assumptions
\hyperlink{phi1}{$(\phi_1)$} and \hyperlink{phi2}{$(\phi_2)$} alone are not sufficient
to guarantee that the function $g(\xi)=\phi(|\xi|)\xi$ for $\xi\neq0$, extended by continuity
with $g(0)=0$, is differentiable at $\xi=0$. Consequently, the local invertibility
argument used in the proof of Lemma \ref{l233} does not apply.

This issue can be resolved if $\phi:[0,+\infty)\to(0,+\infty)$ is a $C^{1}$ function
satisfying \hyperlink{phi1}{$(\phi_1)$} and \hyperlink{phi2}{$(\phi_2)$}, together with
the additional assumption $\phi(0)>0$. In that case, the computation in the proof of
Lemma \ref{l233} shows that $g$ is $C^{1}$ also at $\xi=0$ with $Dg(0)=\phi(0)I$,
hence $g$ is locally invertible everywhere. Therefore, when $\phi(0)>0$, the argument
of Lemma \ref{l233} applies to show that the solutions $q_{c}$ are in fact global
$C^{2}$ solutions of \eqref{S} on $\R$. Moreover, the continuation across the contact
times is unique, so $q_c$ is the unique global extension of the given $c$--connection.
\end{remark}
 
%
%
\section{Applications to Classical Potentials}\label{sec:applications} 

Following \cite{AMZ}, we illustrate how the abstract variational
framework developed in Section \ref{Section3} applies to classical
models such as double--well, Duffing--type, and multiple pendulum
potentials. Such potentials are commonly considered in
the variational literature on homoclinic and heteroclinic solutions, see,
e.g., \cite{BolotinKozlov,ambrosetti1992homoclinics,rabinowitz1991results,rabinowitz2000results,alikakos2008connection,alessio2016brake,antonopoulos2016minimizers,sternberg2016heteroclinic}.

We show that these models fit naturally into the abstract framework
introduced in Section \ref{Section3}. In particular, both the separation
condition \hyperlink{V}{$(\mathcal V_c)$} and the coercivity condition \hyperlink{V}{$(\mathcal J_c)$}
hold for all values of $c$ below a natural threshold $c^{*}$ determined
by the geometry of the potential $V$. Hence the existence results
established in Section \ref{Section3} apply to all such values of $c$
and provide connecting trajectories with prescribed energy $-c$ for the
quasilinear system \eqref{S}.

As in \cite{AMZ}, the method also allows one to address multiplicity
issues. This aspect will be illustrated in the case of multiple
pendulum--type systems, where the sublevel set $\mathcal V_c$ admits
different admissible decompositions into well--separated subsets,
leading correspondingly to distinct connecting orbits at the same
prescribed energy level.

%
	
\subsection{Double-well potential systems}

Throughout this subsection, we assume that the potential function $V:\mathbb{R}^N\to\mathbb{R}$ belongs to $C^1(\mathbb{R}^N;\mathbb{R})$ and satisfies the following conditions:

\begin{itemize}
	\item[\hypertarget{V1}{$(V_1)$}] There exist two distinct points $a^-, a^+ \in \mathbb{R}^N$ such that  
	$$
	V(a^+) = V(a^-) = 0, \quad \text{and} \quad V(x) > 0 \quad \text{for all } x \in \mathbb{R}^N \setminus \{a^-, a^+\}.
	$$
	\item[\hypertarget{V2}{$(V_2)$}] The potential is bounded away from zero at infinity, namely 
	$$
	\liminf_{|x| \to +\infty} V(x) =: \nu_0 > 0.
	$$
\end{itemize}

These conditions describe a typical \emph{double--well potential}, where $V$
attains its minimum at two isolated points, is positive elsewhere, and is
uniformly positive at infinity.

Let $\bar r=\tfrac12|a^- - a^+|$ and set
\[
c^{*}:=\inf_{x\in \mathbb R^N\setminus\bigl(B_{\bar r}(a^-)\cup B_{\bar r}(a^+)\bigr)} V(x).
\]

\begin{remark}\label{R:c*double}
By \hyperlink{V1}{$(V_1)$} and \hyperlink{V2}{$(V_2)$} one has
\[
c^{*}\in (0,\nu_{0}].
\]
Indeed $c^{*}\leq \nu_{0}$ by definition of $\nu_{0}$.
To prove that $c^{*}>0$, assume by contradiction that $V(x_n)\to0$ for some
$(x_n)\subset \mathbb R^N\setminus\bigl(B_{\bar r}(a^-)\cup B_{\bar r}(a^+)\bigr)$.
Then $(x_n)$ is bounded by \hyperlink{V2}{$(V_2)$} and, up to a subsequence, $x_n\to x_0$.
Hence $V(x_{0})=0$ while $x_0\neq a^\pm$, contradicting \hyperlink{V1}{$(V_1)$}.
\end{remark}
\begin{lemma}\label{l31}
	If $V \in C^1(\mathbb{R}^N;\mathbb{R})$ satisfies \hyperlink{V1}{$(V_1)$} and \hyperlink{V2}{$(V_2)$}, then 
	\begin{itemize}
	\item[(a)] For every  $c \in [0, c^{*})$,  condition  \hyperlink{V}{$(\mathcal{V}_c)$} holds with respect to two disjoint compact sets  $\mathcal{V}_c^-$ and $\mathcal{V}_c^+$ such that 
		$$
		a^{+}\in\mathcal{V}_c^+\subset B_{\bar r}(a^+)\quad\text{and}\quad  a^{-}\in\mathcal{V}_c^-\subset B_{\bar r}(a^-).
		$$ 
		In particular, in the case $c = 0$, $\mathcal{V}_c^- = \{a^-\}$ and $\mathcal{V}_c^+ = \{a^+\}$.

		\item[(b)] For every $c \in [0, c^{*})$, the coercivity condition \hyperlink{J}{$(\mathcal{J}_c)$} holds. \vspace{0.2cm}
			\end{itemize}
\end{lemma}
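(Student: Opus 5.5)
For part (a), I would fix $c\in[0,c^*)$ and set $\mathcal V_c^\pm:=\mathcal V_c\cap B_{\bar r}(a^\pm)$. By definition of $c^*$, every $x$ outside $B_{\bar r}(a^-)\cup B_{\bar r}(a^+)$ satisfies $V(x)\ge c^*>c$. Hence $\mathcal V_c\subset B_{\bar r}(a^-)\cup B_{\bar r}(a^+)$, and these two open balls are disjoint because $\bar r=\tfrac12|a^--a^+|$. Thus $\mathcal V_c=\mathcal V_c^-\cup\mathcal V_c^+$, and $a^\pm\in\mathcal V_c^\pm$ since $V(a^\pm)=0\le c$.

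The main point is closedness and a positive distance. The key is that $\mathcal V_c^\pm$ stay away from the spheres $\partial B_{\bar r}(a^\pm)$. I would first show that $\mathcal V_c$ is compact: it is closed, and it is bounded because $\mathcal V_c$ lies in the union of the two balls. Next, the continuous function $\dist(\cdot,\mathbb R^N\setminus(B_{\bar r}(a^-)\cup B_{\bar r}(a^+)))$ is positive on the compact set $\mathcal V_c$, so it has a positive minimum $\delta$. Then $\mathcal V_c^\pm\subset \overline{B_{\bar r-\delta}(a^\pm)}$, which gives both closedness (as an intersection of closed sets) and $\dist(\mathcal V_c^-,\mathcal V_c^+)\ge 2\delta>0$. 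For $c=0$, hypothesis $(V_1)$ gives $\mathcal V_0=\{a^-,a^+\}$ directly.

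For part (b), I need a cut-off argument showing that any $q\in\Gamma_c$ can be replaced by one with $\|q\|_{L^\infty}\le R$ without increasing $J_c$. I would choose $R_0$ such that $V(x)\ge\nu_0/2$ for $|x|\ge R_0$, using $(V_2)$, with $R_0>|a^\pm|+\bar r$, and set $R=R_0+1$.

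Take $q\in\Gamma_c$ with $J_c(q)<\infty$ and $\|q\|_\infty>R$. Since $q(t)$ accumulates at $\mathcal V_c^\pm\subset B_{R_0}(0)$ as $t\to\pm\infty$, the set $\{t:|q(t)|>R_0\}$ is a union of intervals. On each such interval, $V(q)-c\ge\nu_0/2-c$, which might be negative when $c$ is close to $\nu_0$. To avoid this, I would instead pick $R_0$ with $\inf_{|x|\ge R_0}V>c$, which is possible since $c<c^*\le\nu_0=\liminf_{|x|\to\infty}V$.

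I would then replace $q$ by its radial projection $\tilde q=\pi_{R_0}\circ q$, where $\pi_{R_0}(x)=x$ if $|x|\le R_0$ and $\pi_{R_0}(x)=R_0x/|x|$ otherwise. Because $\pi_{R_0}$ is $1$-Lipschitz, $|\dot{\tilde q}|\le|\dot q|$, so $\Phi(|\dot{\tilde q}|)\le\Phi(|\dot q|)$. The anticipated obstacle is the potential term: I need $V(\pi_{R_0}(x))\le V(x)$ for $|x|>R_0$, and this does not follow from the hypotheses.

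My fallback is to use excursion intervals. On any interval where $|q|>R_0$, I would replace $q$ by the projected path on the sphere $|x|=R_0$; this keeps $V\ge c$ there and keeps $\tilde q\in\Gamma_c$. To control the cost, I would use Lemma \ref{l4}. A path that travels from $|x|=R_0$ to $|x|=R_0+1$ on a region where $V-c\ge d>0$ costs at least $\lambda_dH(1)$. If I choose the shell $R_0<|x|<R_0+1$ so that $V-c\ge d$ there, and replace the excursion by a sphere path that stays inside the shell, I still need to bound the cost of that sphere path.

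The cleaner approach is a lemma asserting that the infimum is unchanged, proved by taking a minimizing sequence and truncating only the excursions that go beyond $R_0+1$. This reduces the problem to comparing a single crossing's cost $\lambda_dH(1)$ with the cost of the truncated replacement. That comparison is exactly where I expect the difficulty to lie, and I would try choosing $R_0$ large relative to $\sup_{|x|\le R_0}V$ so that the crossing cost dominates.
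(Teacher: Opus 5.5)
Your part (a) is correct and follows the paper's decomposition $\mathcal V_c^\pm=\mathcal V_c\cap B_{\bar r}(a^\pm)$. Your $\delta$-argument actually supplies the closedness and positive-distance details that the paper leaves implicit.

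Part (b), however, has a genuine gap: you never reach a proof. The truncation route stalls exactly where you say it does. Radial projection gives no control of $V$. The comparison between a unit-shell crossing cost $\lambda_d H(1)$ and the cost of a replacement arc on $\{|x|=R_0\}$ has no reason to go your way: $\lambda_d H(1)$ does not improve as $R_0$ grows, while the length of such an arc and $\sup_{|x|\le R_0}V$ typically do, so choosing $R_0$ ``large relative to $\sup_{|x|\le R_0}V$'' is circular. The missing idea is that no competitor needs to be built. You should apply Lemma \ref{l4} across an annulus of \emph{growing} width and compare with the finite number $m_c$.

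Concretely, pick $\mu>0$ and $R_0>\max\{|a^-|,|a^+|\}+\bar r+1$ with $V\ge c+\mu$ on $\{|x|\ge R_0\}$; this is possible since $c<c^*\le\nu_0$. Every $q\in\Gamma_c$ enters $B_{R_0}(0)$, because $\liminf_{t\to-\infty}\dist(q(t),\mathcal V_c^-)=0$ and $\mathcal V_c^-\subset B_{\bar r}(a^-)$. Hence if $\|q\|_{L^\infty(\R;\R^N)}>R>R_0$, continuity gives $s<t$ with $\{|q(s)|,|q(t)|\}=\{R_0,R\}$ and $R_0\le|q|\le R$ on $[s,t]$. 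Lemma \ref{l4} then yields $J_c(q)\ge J_{c,(s,t)}(q)\ge\lambda_\mu H(R-R_0)$.

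Now $m_c<+\infty$, by the segment competitor in Lemma \ref{l3}, whose finiteness part does not use $(\mathcal J_c)$. Also $H(r)\to+\infty$ as $r\to+\infty$. So choose $R$ with $\lambda_\mu H(R-R_0)>m_c+1$. Then every $q\in\Gamma_c$ with $J_c(q)\le m_c+1$ already satisfies $\|q\|_{L^\infty(\R;\R^N)}\le R$, which is $(\mathcal J_c)$. The paper runs the same estimate by contradiction, on a minimizing sequence with $\|q_n\|_{L^\infty}\to+\infty$.
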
	
\begin{proof}
\noindent {\it (a)} 
If $c<c^{*}$, then $V> c$ on
$\mathbb R^N\setminus\bigl(B_{\bar r}(a^-)\cup B_{\bar r}(a^+)\bigr)$, and therefore
\[
\mathcal V_c\subset B_{\bar r}(a^-)\cup B_{\bar r}(a^+).
\]
Set
\[
\mathcal V_c^-:=\mathcal V_c\cap B_{\bar r}(a^-),\qquad
\mathcal V_c^+:=\mathcal V_c\cap B_{\bar r}(a^+).
\]
Then $\mathcal V_c^\pm$ are compact and disjoint, and satisfy
$\mathcal V_c=\mathcal V_c^-\cup \mathcal V_c^+$. For $c=0$, by \hyperlink{V1}{$(V_{1})$} one has
$\mathcal V_0^-=\{a^-\}$ and $\mathcal V_0^+=\{a^+\}$.

{\it (b)} Fix $c\in[0,c^{*})$.	Since $c<c^{*}\leq \nu_{0}$ we have $\mu=(\nu_{0}-c)/2>0$, and
 there exists $R_{0}>0$ such that
$
V(x)\ge c+\mu$ for all $|x|\ge R_0$.
Suppose by contradiction that \hyperlink{J}{$(\mathcal J_c)$} fails.
Then there exists $(q_n)\subset\Gamma_c$ such that
$
J_c(q_n)\to m_c$ and
$\|q_n\|_{L^\infty(\mathbb R;\mathbb R^N)}\to+\infty$.
Since $\|q_n\|_{L^\infty(\mathbb R;\mathbb R^N)}\to+\infty$, for any $R>R_0$ and $n$ large,  there exist $s_n<t_n$ such that
\[
|q_n(t_n)-q_n(s_n)|\ge R-R_0
\quad\text{and}\quad
V(q_n(t))\ge c+\mu \quad\text{for all } t\in(s_n,t_n).
\]
By Lemma~\ref{l4}, there exists $\lambda_\mu>0$ such that
$
J_c(q_n)\ge J_{c,(s_n,t_n)}(q_n)
\ge \lambda_\mu\, H(R-R_0).
$
Since $H(r)\to+\infty$ as $r\to+\infty$, letting $R\to+\infty$ yields
$m_c=+\infty$, a contradiction. Hence \hyperlink{J}{$(\mathcal J_c)$} holds.
\end{proof}

The previous lemma shows that, for every $c<c^*$, the potential $V$
satisfies the separation condition \hyperlink{V}{$(\mathcal V_c)$} and the coercivity
condition \hyperlink{J}{$(\mathcal J_c)$}. Hence Theorem \ref{thm:minimal-connection} applies.

\begin{proposition}\label{p:doublewell-connection}
Assume that $V\in C^{1}(\R^{N};\R)$ satisfies \hyperlink{V1}{$(V_{1})$} and \hyperlink{V2}{$(V_{2})$}.
Then for every $c\in[0,c^{*})$ there exist
$I=(\alpha,\omega)$ with $-\infty\le \alpha<\omega\le +\infty$
and a minimal $c$--connection
$q:\bar I\to\R^{N}$ between $\mathcal V_c^-$ and $\mathcal V_c^+$.
In particular,
$
q\in C^{2}(I;\R^{N})\cap C^{1}(\bar I;\R^{N}),
$
and solves \eqref{S} on $I$ with
$
E_q(t)=-c$ for all $t\in I .
$
\end{proposition}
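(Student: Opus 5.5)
The proposition is a direct consequence of the abstract theory. The plan is to check the hypotheses of Theorem~\ref{thm:minimal-connection} using Lemma~\ref{l31}, and then read off the regularity and energy statements from Definition~\ref{def:minconnection}.

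Fix $c\in[0,c^{*})$. By Lemma~\ref{l31}(a), condition \hyperlink{V}{$(\mathcal V_c)$} holds with the decomposition
\[
\mathcal V_c^\pm=\mathcal V_c\cap B_{\bar r}(a^\pm).
\]
These are nonempty because $a^\pm\in\mathcal V_c^\pm$ (since $V(a^\pm)=0\le c$), and they are compact and disjoint. The one point to make explicit is that $\dist(\mathcal V_c^-,\mathcal V_c^+)>0$. This follows because two disjoint nonempty compact sets are at positive distance. If one prefers a quantitative bound, observe that $V>c$ on the closed set $\mathbb R^N\setminus(B_{\bar r}(a^-)\cup B_{\bar r}(a^+))$, so $\mathcal V_c^\pm$ lie in the open balls, which are disjoint since $\bar r=\tfrac12|a^--a^+|$. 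Next, Lemma~\ref{l31}(b) gives the coercivity condition \hyperlink{J}{$(\mathcal J_c)$} for some $R>0$.

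With both hypotheses verified, and with \hyperlink{phi1}{$(\phi_1)$}--\hyperlink{phi2}{$(\phi_2)$} standing assumptions, Theorem~\ref{thm:minimal-connection} yields an interval $I=(\alpha,\omega)$ with $-\infty\le\alpha<\omega\le+\infty$ and a minimal $c$--connection $q:\bar I\to\R^N$ between $\mathcal V_c^-$ and $\mathcal V_c^+$. Concretely, $q$ is the restriction of some $q_0\in\mathcal M_c$ to the closure of its contact interval $(\alpha_{q_0},\omega_{q_0})$.

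The "in particular" clause is then exactly items (i) and (ii) of Definition~\ref{def:minconnection}. These give $q\in C^1(\bar I;\R^N)\cap C^2(I;\R^N)$, that $q$ solves \eqref{S} on $I$, and that $E_q(t)=-c$ for every $t\in I$. If one wants to trace these back to their sources, they come from Lemma~\ref{l233} together with the endpoint argument in the proof of Theorem~\ref{thm:minimal-connection}.

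There is no genuine obstacle. The only step needing any care is the positivity of the distance between $\mathcal V_c^-$ and $\mathcal V_c^+$, which the compactness argument above settles.
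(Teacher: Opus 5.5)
Your proposal is correct and follows the paper's route exactly: Lemma~\ref{l31} supplies $(\mathcal V_c)$ and $(\mathcal J_c)$, Theorem~\ref{thm:minimal-connection} then produces the minimal $c$--connection, and the regularity and energy claims are read off from Definition~\ref{def:minconnection}. Your explicit check that $\dist(\mathcal V_c^-,\mathcal V_c^+)>0$ fills in a detail the paper leaves implicit in Lemma~\ref{l31}(a). The operative reason is compactness plus disjointness: the two balls are tangent, so containment in disjoint open balls alone would not give a positive distance.
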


By the extension results of Subsection \ref{Subsec3.3},
this connection generates an entire weak solution $q_c$ of \eqref{S}
with constant energy $-c$.

By Remark~\ref{R:cregular}, if $c$ is a regular value of $V$,
then the extended weak solution $q_c$ is necessarily of brake type.
%


\subsection{Duffing-like systems}
	
In this subsection we assume that the potential $V:\mathbb{R}^N\to\mathbb{R}$  belongs to $C^1(\mathbb{R}^N;\mathbb{R})$ and satifies
\begin{itemize}
	\item[\hypertarget{V3}{($V_3$)}] There exists $r_0>0$ such that
	$$ 
	V(x) > 0\quad \text{for any }~ x \in B_{4r_0}(0) \setminus \{0\},
	$$ 
	that is, $V$ has a strict local minimum at $x_0 := 0$ with value $V(0) = 0$.
	\item[\hypertarget{V4}{($V_4$)}] The set 
	$$
	C_0 := \left\{x \in \mathbb{R}^N : V(x) > 0\right\} \cup \{0\},
	$$ 
	is bounded and satisfies $\nabla V(x) \neq 0$ for any $x \in \partial C_0$.
\end{itemize}
These assumptions describe a typical \emph{Duffing--type potential}, in which
$V$ has a strict isolated minimum at the origin,
is positive in a bounded region around it,
and changes sign across the boundary of this region.
\begin{remark}\label{R:C0-geometry}
Since $V\in C^{1}(\mathbb R^{N};\R)$ and $\nabla V(x)\neq0$ for every
$x\in\partial C_0$, the set $\partial C_0$ is a compact $C^{1}$ hypersurface.
Moreover, denoting
$
\nu_x=-\frac{\nabla V(x)}{\|\nabla V(x)\|}
$
the outward unit normal, compactness implies that there exists
$v_0>0$ such that
$
\nabla V(x)\cdot\nu_x=-\|\nabla V(x)\|\le -v_0$
for all $x\in\partial C_0$.
As a consequence, there exists $\delta_0>0$ such that
\begin{equation}\label{eq:vicinoC0}V(x)<0\text{ whenever
}0<\dist(x,\overline C_{0})<\delta_0.\end{equation}
\end{remark}
As in the double--well case, the geometry of the potential $V$
naturally determines a threshold value $c^*$.
We set 
$\bar r=\frac 12\dist(0,\R^{N}\setminus C_{0})$,
noting that by \hyperlink{V3}{$(V_3)$} one has
$B_{4r_{0}}(0)\subset C_{0}$ and therefore $\bar r\geq 2r_{0}>0$.
For a given $A\subset\mathbb R^{N}$ and $r>0$ we denote
\[
N_{r}(A)
=\{x\in\mathbb R^N: \dist(x,A)<r\}.\]
We define
\[
c^{*}
=
\inf\{V(x):
x\in C_0\setminus \bigl(B_{\bar r}(0)\cup N_{\bar r}(\mathbb R^N\setminus C_0)\bigr)
\}.
\]
This is the natural analogue of the definition of $c^{*}$ in the double--well case,
with the isolated minimum $0$ and the boundary $\partial C_0$
playing the role of the two wells.

Observe that $c^{*}>0$. Indeed, assume by contradiction $c^{*}=0$. Then there exists a sequence $(x_{n})\subset C_{0}$ with $|x_{n}|\ge \bar r$, $\dist(x_{n},\mathbb R^N\setminus C_0)\ge \bar r$ and $V(x_{n})\to 0$. Since $C_{0}$ is bounded, up to a subsequence we have $x_{n}\to x_{0}$. By continuity of $V$ we have $V(x_{0})=0$, while $x_{0}\in C_{0}\setminus\{0\}$, a contradiction because by definition of $C_{0}$ one has $V(x)>0$ on $C_{0}\setminus\{0\}$

\begin{lemma}\label{l321}
If $V \in C^1(\mathbb{R}^N;\mathbb{R})$ satisfies \hyperlink{V3}{$(V_3)$} and \hyperlink{V4}{$(V_4)$}, then
	\begin{itemize}
	\item[(a)] For every $c \in [0, c^*)$, the condition \hyperlink{V}{$(\mathcal{V}_c)$} is satisfied with respect to two disjoint closed sets  $\mathcal{V}_c^-$ and $\mathcal{V}_c^+$ such that 
		$$
		\partial C_{0}\subset \mathcal{V}_c^+\subset \R^{N}\setminus \overline B_{\bar r}(0)\quad\text{and}\quad  0\in \mathcal{V}_c^-\subset B_{\bar r}(0).
		$$ 
		\item[(b)] For every $c \in [0,c^{*})$, the coercivity condition \hyperlink{J}{$(\mathcal{J}_c)$} holds.
		
	\end{itemize}
\end{lemma}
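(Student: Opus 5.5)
For part (a), I would build the decomposition directly from the threshold $c^*$, after shrinking the radius slightly to get a positive gap. For (b), I would not use Lemma~\ref{l4}; instead I would cut any admissible path at its contact times, which already confines it to the bounded set $C_0$.

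\emph{Step 1 (a margin below $c^*$).} Fix $c\in[0,c^*)$. For $\varepsilon\in(0,\bar r)$ set
\[
K_\varepsilon=\{x\in\overline C_0:\ |x|\ge \bar r-\varepsilon,\ \dist(x,\R^N\setminus C_0)\ge \bar r-\varepsilon\}.
\]
I claim $V>c$ on $K_\varepsilon$ for some $\varepsilon>0$. If not, there are $\varepsilon_n\to0$ and $x_n\in K_{\varepsilon_n}$ with $V(x_n)\le c$. By boundedness of $C_0$ (from \hyperlink{V4}{$(V_4)$}) we get $x_n\to x_0$, with $|x_0|\ge\bar r$ and $\dist(x_0,\R^N\setminus C_0)\ge\bar r>0$. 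Hence $x_0\in C_0\setminus\bigl(B_{\bar r}(0)\cup N_{\bar r}(\R^N\setminus C_0)\bigr)$ and $V(x_0)\le c<c^*$, contradicting the definition of $c^*$. With this $\varepsilon$, every $x\in\mathcal V_c$ satisfies $|x|<\bar r-\varepsilon$ or $\dist(x,\R^N\setminus C_0)<\bar r-\varepsilon$. Points outside $C_0$ have $V\le 0\le c$, so they lie in $\mathcal V_c$ and fall in the second alternative.

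\emph{Step 2 (the sets $\mathcal V_c^\pm$).} Define
\[
\mathcal V_c^-=\mathcal V_c\cap \overline B_{\bar r-\varepsilon}(0),\qquad
\mathcal V_c^+=\mathcal V_c\cap\{\dist(\cdot,\R^N\setminus C_0)\le \bar r-\varepsilon\}.
\]
Both are closed. Their union is $\mathcal V_c$ by Step 1, since the boundary cases ($|x|=\bar r-\varepsilon$ or $\dist=\bar r-\varepsilon$) also lie in $K_\varepsilon$, where $V>c$.
\begin{itemize}
\item Since $\dist(0,\R^N\setminus C_0)=2\bar r$, the triangle inequality gives $\dist(\mathcal V_c^-,\mathcal V_c^+)\ge 2\varepsilon>0$.
\item By \hyperlink{V3}{$(V_3)$}, $C_0$ is open: $0$ is interior and $\{V>0\}$ is open. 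So $\partial C_0\subset\R^N\setminus C_0\subset\mathcal V_c^+$, using $V=0\le c$ on $\partial C_0$.
\item Every $x\in\mathcal V_c^+$ has $|x|\ge 2\bar r-(\bar r-\varepsilon)>\bar r$, so $\mathcal V_c^+\subset\R^N\setminus\overline B_{\bar r}(0)$.
\item We have $0\in\mathcal V_c^-\subset B_{\bar r}(0)$.
\end{itemize}

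\emph{Step 3 (coercivity by truncation).} Set $R=\sup_{x\in\overline C_0}|x|<\infty$. Given $q\in\Gamma_c$ with $J_c(q)<\infty$, choose $t_0$ with $q(t_0)\notin\mathcal V_c$; this exists because $q$ is continuous and must pass between two sets at positive distance. Define
\[
\alpha=\sup\{s<t_0: q(s)\in\mathcal V_c^-\}\in[-\infty,t_0),\qquad
\omega=\inf\{s>\alpha: q(s)\in\mathcal V_c^+\}.
\]
Here $\alpha<t_0$ by closedness, and $\omega>\alpha$ by the positive distance between the two sets. On $(\alpha,\omega)$ we have $q(s)\notin\mathcal V_c^+\supset\R^N\setminus C_0$, so $q(s)\in C_0$ and $|q(s)|\le R$. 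Now let $\tilde q$ equal $q(\alpha)$ on $(-\infty,\alpha]$ if $\alpha$ is finite, $q$ on $(\alpha,\omega)$, and $q(\omega)$ on $[\omega,\infty)$ if $\omega$ is finite. The endpoint values are limits of points of $C_0$, so they lie in $\overline C_0$, and they also lie in $\mathcal V_c^\mp$ by closedness. Therefore $\tilde q\in W^{1,\Phi}_{loc}$, $V(\tilde q)\ge c$, and $\|\tilde q\|_{L^\infty}\le R$. The asymptotic conditions in $\Gamma_c$ hold trivially at a finite endpoint. At an infinite endpoint they are inherited from $q$, since $\tilde q=q$ on the corresponding half-line. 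Because the integrand of $J_c$ is nonnegative on $\mathcal X_c$ and vanishes on the constant pieces (there $V=c$), we get $J_c(\tilde q)\le J_c(q)$. Hence the infimum over $\Gamma_c$ equals the infimum over the paths with $\|q\|_{L^\infty}\le R$, which is \hyperlink{J}{$(\mathcal J_c)$}.

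\emph{Main obstacle.} I expect the main obstacle to be Step 1. There I need $V>c$ on a slightly enlarged, closed shell, rather than only on the set appearing in the definition of $c^*$, in order to obtain genuinely positive separation. The compactness argument above is what I would rely on for this. In Step 3, the one point needing care is that, on $(\alpha,\omega)$, avoiding $\mathcal V_c^+$ already forces $q$ into $C_0$; this is what the inclusion $\R^N\setminus C_0\subset\mathcal V_c^+$ from Step 2 provides.
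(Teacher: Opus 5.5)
Your part (a) is correct. Your sets actually coincide with the paper's $\mathcal V_c\cap B_{\bar r}(0)$ and $\mathcal V_c\cap N_{\bar r}(\R^N\setminus C_0)$. The $\varepsilon$-margin is a valid but unnecessary way to get separation; the paper just notes that $\mathcal V_c^-$ is compact, $\mathcal V_c^+$ is closed, and the two are disjoint.

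The gap is in Step 3, at the claim that $\omega>\alpha$ ``by the positive distance between the two sets''. This is justified only when $\alpha$ is finite, because then $q(\alpha)\in\mathcal V_c^-$ keeps $q$ off $\mathcal V_c^+$ for a while. If $\alpha=-\infty$, then $\omega=\inf\{s\in\R: q(s)\in\mathcal V_c^+\}$. Nothing in your argument prevents $q$ from entering $\mathcal V_c^+$ at arbitrarily negative times while still satisfying $\liminf_{t\to-\infty}\dist(q(t),\mathcal V_c^-)=0$. In that case $\omega=-\infty$, the interval $(\alpha,\omega)$ is empty, and no bounded competitor is produced.

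To exclude this you need the finite action, through exactly the Lemma~\ref{l4} estimate you wanted to avoid:
\begin{itemize}
\item The shell $\{x:\rho_0/4\le\dist(x,\mathcal V_c^-)\le\rho_0/2\}$ is compact, because $\mathcal V_c^-\subset B_{\bar r}(0)$.
\item It is disjoint from $\mathcal V_c$, so $V\ge c+d$ on it for some $d>0$.
\item The oscillation forces infinitely many disjoint crossings of this shell, each costing at least $\lambda_d H(\rho_0/4)$, which contradicts $J_c(q)<\infty$.
\end{itemize}
This is the argument of Lemma~\ref{l5}, with compactness coming from the boundedness of $\mathcal V_c^-$ rather than from an $L^\infty$ bound. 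With this addition your truncation works. It then has the merit of never using $\nabla V\neq 0$ on $\partial C_0$.

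The paper needs no truncation. It shows that every $q\in\Gamma_c$ already lies in $\overline C_0$, so $(\mathcal J_c)$ holds trivially with $R=\sup_{\overline C_0}|x|$.
\begin{itemize}
\item For $c>0$ this is immediate: $V(q(t))\ge c>0$ forces $q(t)\in C_0$. So your Step 3 is superfluous in that case.
\item For $c=0$ it uses \eqref{eq:vicinoC0}, which is where the nondegeneracy in $(V_4)$ enters. A continuous path that comes close to $\mathcal V_0^-\subset C_0$ cannot reach a point at positive distance from $\overline C_0$ without crossing the layer $0<\dist(\cdot,\overline C_0)<\delta_0$, where $V<0$.
\end{itemize}
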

\begin{proof} \noindent{\it (a)} Fix $c\in[0,c^*)$. By definition of $c^*$ one has
\[
V(x)\ge c^*>c \quad \text{for all }x\in C_0\setminus (B_{\bar r}(0)\cup N_{\bar r}(\R^N\setminus C_0)).
\]
Since moreover $V(x)\le 0$ for $x\notin C_0$, it follows that
\[
\mathcal V_c \subset B_{\bar r}(0)\cup N_{\bar r}(\mathbb R^N\setminus C_0).
\]
Set
\[
\mathcal V_c^-=\mathcal V_c\cap B_{\bar r}(0)\text{ and }\mathcal V_c^+=\mathcal V_c\cap N_{\bar r}(\mathbb R^N\setminus C_0).
\]
Then $\mathcal V_{c}=\mathcal V_c^-\cup \mathcal V_c^+$ and
$\mathcal V_c^-\cap\mathcal V_c^+=\emptyset$. Moreover $0\in\mathcal V_c^-$ and $\partial C_0\subset \mathcal V_c^+$ since $V=0$ on $\partial C_{0}$.
Finally $\mathcal V_c^-$ is compact and $\mathcal V_c^+$ is closed. Hence, 
\[
\dist(\mathcal V_c^-,\mathcal V_c^+)>0,
\]
and therefore condition \hyperlink{V}{$(\mathcal V_c)$} holds.

\noindent{\it (b)} Fix $c\in[0,c^{*})$ and let $q\in\Gamma_{c}$.
Then by definition $V(q(t))\ge c$ for all $t\in\R$. 

If $c>0$, this implies $V(q(t))>0$ for all $t$, hence
$q(t)\in C_{0}$ for every $t\in\R$.
Since $C_0$ is bounded, it follows that $q$ is bounded.

When $c=0$ we claim that $q(t)\in\overline C_0$ for all $t\in\R$. Again
since $C_0$ is bounded, this implies that $q$ is bounded and (b) follows.

To prove the claim, argue by contradiction. Assume that there exists $t_1\in\R$ such that
$\dist(q(t_1),\overline C_0)>0$. Since $\mathcal V_c^-\subset B_{\bar r}(0)\subset C_0$, and 
$\liminf_{t\to-\infty}\dist(q(t),\mathcal V_c^-)=0$,
there exists $t_0$ such that $q(t_0)\in C_0$ so that $\dist(q(t_0),\overline C_0)=0$. By continuity of the map $t\mapsto \dist(q(t),\overline C_0)$, there is a point $t_{2}$,  between $t_{0}$ and $t_{1}$, such that
$\delta_{0}>\dist(q(t_2),\overline C_0)>0$. By \eqref{eq:vicinoC0} this yields $V(q(t_2))<0$,
contradicting $V(q(t))\ge c=0$.
\end{proof}

By Lemma \ref{l321}, Theorem \ref{thm:minimal-connection} applies.
\begin{proposition}\label{p:duffing-connection}
Assume that $V\in C^{1}(\R^{N};\R)$ satisfies \hyperlink{V3}{$(V_{3})$} and \hyperlink{V4}{$(V_{4})$}.
Then for every $c\in[0,c^{*})$ there exist
$I=(\alpha,\omega)$ with $-\infty\le \alpha<\omega\le +\infty$
and a minimal $c$--connection
$q:\bar I\to\R^{N}$ between $\mathcal V_c^-$ and $\mathcal V_c^+$. In particular,
$
q\in C^{2}(I;\R^{N})\cap C^{1}(\bar I;\R^{N}),
$
solves \eqref{S} on $I$, and
$E_q(t)=-c$ for all $t\in I$.
\end{proposition}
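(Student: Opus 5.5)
This is a direct application of Theorem~\ref{thm:minimal-connection}. The plan is to check its hypotheses for each fixed $c\in[0,c^*)$ and then read off the conclusion.

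First, fix $c\in[0,c^*)$. Lemma~\ref{l321}(a) gives the decomposition $\mathcal V_c=\mathcal V_c^-\cup\mathcal V_c^+$, with $0\in\mathcal V_c^-\subset B_{\bar r}(0)$ and $\partial C_0\subset\mathcal V_c^+$. Both pieces are nonempty and closed, and their distance is positive, so \hyperlink{V}{$(\mathcal V_c)$} holds. Lemma~\ref{l321}(b) gives \hyperlink{J}{$(\mathcal J_c)$}. Indeed, every $q\in\Gamma_c$ takes values in $\overline C_0$, which is bounded, so any $R$ with $\overline C_0\subset \overline{B_R(0)}$ makes the two infima in \hyperlink{J}{$(\mathcal J_c)$} coincide.

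One point needs care in the case $c=0$: $\mathcal V_0^+$ contains the unbounded set $\{V\le 0\}\setminus C_0$, so only closedness is available, not compactness. The positivity of $\dist(\mathcal V_c^-,\mathcal V_c^+)$ therefore has to rest on $\mathcal V_c^-$ being compact. Concretely, $\mathcal V_c^-\subset B_{\bar r}(0)$ and $\mathcal V_c^+\subset N_{\bar r}(\R^N\setminus C_0)$. In the case $c>0$ one also has $\mathcal V_c\cap C_0\subset C_0\setminus\{0\}$, and a compactness argument shows that the separation is positive.

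The abstract theory in Section~\ref{Section3} only used closedness of $\mathcal V_c^\pm$, the positive distance $\rho_0$, and boundedness of minimizers, which comes from \hyperlink{J}{$(\mathcal J_c)$}. Remark~\ref{remark1} uses compactness of $K_{r,C}$, and this holds since $\mathcal V_c$ is closed. So nothing in Section~\ref{Section3} breaks when $\mathcal V_c^+$ is unbounded.

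Applying Theorem~\ref{thm:minimal-connection} then yields a minimal $c$--connection $q:\bar I\to\R^N$ with $I=(\alpha,\omega)$. By Definition~\ref{def:minconnection}, $q\in C^1(\bar I;\R^N)\cap C^2(I;\R^N)$, $q$ solves \eqref{S} on $I$, and $E_q\equiv -c$ on $I$.

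The only step needing attention is the verification of \hyperlink{V}{$(\mathcal V_c)$} via Lemma~\ref{l321}, specifically the positive distance between a compact set and a closed, possibly unbounded set. That step has already been carried out, so the proof reduces to citing the two lemmas.
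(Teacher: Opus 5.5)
Your proposal is correct and follows the paper's argument exactly: Lemma~\ref{l321} supplies $(\mathcal V_c)$ and $(\mathcal J_c)$ for every $c\in[0,c^*)$, and Theorem~\ref{thm:minimal-connection} then gives the minimal $c$--connection with the stated regularity and energy. Your aside contains two slips. First, $0\in\mathcal V_c\cap C_0$ for every $c\ge 0$, so the claimed inclusion $\mathcal V_c\cap C_0\subset C_0\setminus\{0\}$ is false. Second, $\mathcal V_c^+\supset\mathbb R^N\setminus C_0$ is unbounded for every $c\ge 0$, not only for $c=0$. Neither slip affects the proof, since the positive distance comes from the compactness of $\mathcal V_c^-$, as you say.
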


We remark again that each minimal $c$--connection given by Proposition \ref{p:duffing-connection} generates
an entire weak solution $q_c$ of \eqref{S} with constant energy $-c$.


\subsection{Multiple pendulum-type systems}

We now finally consider the case of \textit{multiple pendulum-type systems} where the potential $V\in C^1(\mathbb{R}^N;\mathbb{R})$ satisfies:

\begin{itemize}
	\item[\hypertarget{V5}{$(V_5)$}] $V$ is $\mathbb{Z}^N$-periodic, namely
	$
	V(x + \xi) = V(x)$ for all $x \in \mathbb{R}^N$ and $\xi \in \mathbb{Z}^N.$
	\item[\hypertarget{V6}{$(V_6)$}] $V(x) \geq 0$ for all $x \in \mathbb{R}^N$, and $V(x) = 0$ if and only if $x \in \mathbb{Z}^N$.
\end{itemize}\bigskip	
Assumptions \hyperlink{V5}{$(V_5)$}--\hyperlink{V6}{$(V_6)$} induce a simple
geometric structure on the configuration space $\mathbb R^N$, which allows us,
as in the preceding cases, to identify a threshold value $c^{*}$ below which the
sublevel set $\mathcal V_c$ enjoys suitable separation properties.
More precisely, we set
\[
c^{*}:=\inf\{V(x):\dist(x,\mathbb Z^N)\ge 1/2\}.
\]
By \hyperlink{V6}{$(V_6)$}, $V$ is strictly positive on
$[0,1]^N\setminus N_{1/2}(\mathbb Z^N)$.
Since this set is compact and $V$ is continuous, it follows that
$c^{*}>0$.

Fix $c\in[0,c^{*})$ and set $\nu_c=(c^{*}-c)/2$.
By continuity and $\mathbb Z^N$--periodicity of $V$, there exists
$\delta_c\in(0,1/2)$ such that
\begin{equation}\label{eq:deltac}
V(x)\ge c+\nu_c
\text{ whenever }
\dist(x,\mathbb Z^N)>1/2-\delta_c.
\end{equation}
Consequently,
\[
\mathcal V_c\subset\{x:\dist(x,\mathbb Z^N)\le 1/2-\delta_c\}
=\cup_{\xi\in\mathbb Z^N}\overline B_{1/2-\delta_c}(\xi).
\]
Hence, defining
\[
\mathcal V_{c,\xi}=\mathcal V_c\cap \overline B_{1/2-\delta_c}(\xi),
\]
it follows that each $\mathcal V_{c,\xi}\neq\emptyset$ is closed, and that
\[
\mathcal V_c=\cup_{\xi\in\mathbb Z^N}\mathcal V_{c,\xi}\text{ and }
\dist(\mathcal V_{c,\xi},\mathcal V_{c,\xi'})\ge 2\delta_c
\quad\text{for all }\xi\neq\xi'\in\mathbb Z^N.
\]
In particular we have

\begin{lemma}\label{lZ}
Assume that $V\in C^{1}(\mathbb R^{N};\mathbb R)$ satisfies
\hyperlink{V5}{$(V_5)$}--\hyperlink{V6}{$(V_6)$}. Let $Z_\pm\subset\mathbb Z^{N}$ be nonempty sets such that
$Z_-\cap Z_+=\emptyset$ and $\mathbb Z^{N}=Z_-\cup Z_+$. Then, for every $c\in[0,c^{*})$, setting
\[
\mathcal V_c^-(Z_{-})= \cup_{\xi\in Z_-} \mathcal V_{c,\xi}\text{ and }
\mathcal V_c^+(Z_{+})= \cup_{\xi\in Z_+} \mathcal V_{c,\xi},
\]
we have 
\[\mathcal V_c=\mathcal V_c^-(Z_{-})\cup\mathcal V_c^+(Z_{+}),\ Z_\pm\subset \mathcal V_c^\pm(Z_{\pm})\text{ and 
}\dist(\mathcal V_c^-(Z_{-}),\mathcal V_c^+(Z_{+}))\geq 2\delta_{c}.\]
In particular the sets $\mathcal V_c^-(Z_{-})$ and $\mathcal V_c^+(Z_{+})$ are closed.
\end{lemma}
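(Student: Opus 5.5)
The statement is essentially set-theoretic once the decomposition $\mathcal V_c=\cup_{\xi\in\mathbb Z^N}\mathcal V_{c,\xi}$ with $\dist(\mathcal V_{c,\xi},\mathcal V_{c,\xi'})\ge 2\delta_c$ for $\xi\neq\xi'$ is available. That decomposition was established just before the lemma using \eqref{eq:deltac}. I would therefore derive each of the claimed properties directly from it.

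First, the union identity. Since $Z_-\cup Z_+=\mathbb Z^N$, we get
\[
\mathcal V_c^-(Z_-)\cup\mathcal V_c^+(Z_+)=\cup_{\xi\in\mathbb Z^N}\mathcal V_{c,\xi}=\mathcal V_c .
\]
For the inclusion $Z_\pm\subset\mathcal V_c^\pm(Z_\pm)$, note that each $\xi\in\mathbb Z^N$ satisfies $V(\xi)=0\le c$ by \hyperlink{V6}{$(V_6)$}, and trivially $\xi\in\overline B_{1/2-\delta_c}(\xi)$. Hence $\xi\in\mathcal V_{c,\xi}$.

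Next, the distance estimate. Take $x\in\mathcal V_c^-(Z_-)$ and $y\in\mathcal V_c^+(Z_+)$, so that $x\in\mathcal V_{c,\xi}$ and $y\in\mathcal V_{c,\xi'}$ with $\xi\in Z_-$ and $\xi'\in Z_+$. Disjointness of $Z_\pm$ gives $\xi\neq\xi'$, so $|x-y|\ge\dist(\mathcal V_{c,\xi},\mathcal V_{c,\xi'})\ge 2\delta_c$. Taking the infimum over $x$ and $y$ yields the bound. If one wants this pairwise separation made explicit, it follows because any $x\in\mathcal V_{c,\xi}$ satisfies $|x-\xi|\le 1/2-\delta_c$ while $|\xi-\xi'|\ge 1$. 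This gives $|x-y|\ge 1-2(1/2-\delta_c)=2\delta_c$.

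The only point needing some care is closedness, because the unions are infinite. Let $x_n\in\mathcal V_c^-(Z_-)$ with $x_n\to x$. Since $\mathcal V_c$ is closed by continuity of $V$, we have $x\in\mathcal V_c$. Suppose $x\in\mathcal V_c^+(Z_+)$. Then $\dist(x_n,\mathcal V_c^+(Z_+))\le|x_n-x|\to0$, which contradicts the uniform $2\delta_c$ separation just proved. Hence $x\in\mathcal V_c^-(Z_-)$, and the symmetric argument handles $\mathcal V_c^+(Z_+)$.

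Alternatively, local finiteness works directly: only finitely many balls $\overline B_{1/2-\delta_c}(\xi)$ meet any bounded set. So near $x$ the union reduces to a finite union of closed sets.

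I expect no real obstacle. The one step to treat carefully is closedness of the infinite unions, and the separation argument settles it.
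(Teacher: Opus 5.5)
Your proof is correct and follows the paper's argument step by step. The union identity and $\xi\in\mathcal V_{c,\xi}$ are immediate, the $2\delta_c$ bound comes from the pairwise separation of the $\mathcal V_{c,\xi}$ because $\xi\neq\xi'$, and closedness follows from closedness of $\mathcal V_c$ together with the positive distance between the two pieces; you simply spell out the sequential closedness argument and the estimate $|x-y|\ge 1-2(1/2-\delta_c)$, which the paper states in one line.
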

 \begin{proof}
Fix $c\in[0,c^*)$ and let $\mathbb Z^N=Z_-\cup Z_+$ be a disjoint partition as in the statement. Since $\mathcal V_c=\cup_{\xi\in Z^N}\mathcal V_{c,\xi}$ and $Z_{-}\cup Z_{+}=\mathbb Z^{N}$, it follows that
$
\mathcal V_c=\mathcal V_c^-(Z_{-})\cup\mathcal V_c^+(Z_{+}).
$
Since $\xi\in\mathcal V_{c,\xi}$ for any $\xi\in\mathbb Z^N$ we obtain also
$Z_\pm\subset\mathcal V_c^\pm(Z_{\pm})$.
Finally, if $x\in\mathcal V_c^-(Z_{-})$ and $y\in\mathcal V_c^+(Z_{+})$, then there exist
$\xi\in Z_-$ and $\xi'\in Z_+$ such that $x\in\mathcal V_{c,\xi}$ and
$y\in\mathcal V_{c,\xi'}$. Hence, since $\xi\neq\xi'$, 
$
|x-y|\ge \dist(\mathcal V_{c,\xi},\mathcal V_{c,\xi'})\ge 2\delta_c$
and therefore $\dist(\mathcal V_c^-(Z_{-}),\mathcal V_c^+(Z_{+}))\ge 2\delta_c$.
Consequently, since $\mathcal V_c=\mathcal V_c^-(Z_{-})\cup\mathcal V_c^+(Z_{+})$ is closed and $\dist(\mathcal V_c^-(Z_{-}),\mathcal V_c^+(Z_{+}))>0$, the 
 sets $\mathcal V_c^\pm(Z_{\pm})$ are closed too.
\end{proof}

Lemma~\ref{lZ} provides a natural decomposition of the sublevel set
$\mathcal V_c$ for every $c\in[0,c^*)$, associated with an arbitrary
partition $\mathbb Z^N=Z_-\cup Z_+$. In particular, this decomposition
ensures that assumption \hyperlink{V}{$(\mathcal V_c)$} holds for all
$c\in[0,c^*)$.
In order to apply the abstract variational results developed in the previous
sections and obtain the existence of connecting orbits, it therefore remains
to verify the coercivity assumption \hyperlink{J}{$(\mathcal J_c)$}
for suitable choices of the sets $Z_\pm$.

The key ingredient for establishing \hyperlink{J}{$(\mathcal J_c)$}
is the following estimate.

\begin{lemma}\label{R:1}
Let $c\in[0,c^{*})$ and $M>0$. Then there exists $R_{0}=R_{0}(c,M)>0$
such that the following holds.  
If $q\in W^{1,\Phi}_{\mathrm{loc}}(\R;\R^{N})$ and $\sigma<\tau$ satisfy
\[
V(q(t))\ge c \quad \text{for all } t\in(\sigma,\tau),
\qquad
J_{c,(\sigma,\tau)}(q)\le M,
\]
then
\[
|q(\tau)-q(\sigma)|\le R_{0}.
\]
\end{lemma}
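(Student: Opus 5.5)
The plan is to count how many ``crossings'' through the region where $V\ge c+\nu_c$ the path must make to travel distance $|q(\tau)-q(\sigma)|$. Each crossing costs a fixed amount of action by Lemma~\ref{l4}, so the bound $J_{c,(\sigma,\tau)}(q)\le M$ limits their number.

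Throughout, use the constant $\delta_c\in(0,1/2)$ from \eqref{eq:deltac}. By that estimate, $V(x)\ge c+\nu_c$ whenever $\dist(x,\mathbb Z^N)>1/2-\delta_c$. Set
\[
\kappa:=\lambda_{\nu_c}H(\delta_c)>0,
\]
with $\lambda_{\nu_c}$ and $H$ as in Lemma~\ref{l4}.

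\emph{Step 1 (a single crossing costs at least $\kappa$).} Suppose that on a subinterval $(s,s')\subset(\sigma,\tau)$ we have $\dist(q(t),\mathbb Z^N)\ge 1/2-\delta_c$ for all $t$, and $|q(s')-q(s)|\ge\delta_c$. Then $V(q(t))\ge c+\nu_c$ on $(s,s')$. The endpoint case of equality is harmless: continuity and a slight shrinking of $\delta_c$ handle it, or we simply work with the open condition. Hence Lemma~\ref{l4} gives $J_{c,(s,s')}(q)\ge\kappa$.

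\emph{Step 2 (a long displacement forces many crossings).} The connected components of $\{x:\dist(x,\mathbb Z^N)<1/2-\delta_c\}$ are the balls $B_{1/2-\delta_c}(\xi)$, $\xi\in\mathbb Z^N$. Any two of them are at distance at least $2\delta_c$, and each has diameter less than $1$.

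Let $D=|q(\tau)-q(\sigma)|$. The path $q([\sigma,\tau])$ joins two points at distance $D$, and each ball it visits has diameter less than $1$. So it must pass from one ball to a different one, or exit a ball and come back into the complement, at least on the order of $D/(\sqrt N+1)$ times. To make this precise I will define times inductively. Let $t_0=\sigma$. Given $t_k$, let $t_{k+1}$ be the first time after $t_k$ at which $|q(t)-q(t_k)|\ge 2$, if such a time exists. Then $D\le 2K+2$, where $K$ is the number of such times inside $[\sigma,\tau]$.

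\emph{Step 3 (each block contains a crossing).} I claim each interval $(t_k,t_{k+1})$ contains a subinterval of the type in Step~1. The segment of path on it has displacement $2$, while each ball has diameter less than $1$. Therefore the path on $[t_k,t_{k+1}]$ cannot stay inside the closure of a single ball. Two cases arise.
\begin{itemize}
\item[(i)] The path stays in $\{\dist(\cdot,\mathbb Z^N)\ge1/2-\delta_c\}$ over a portion where it moves distance at least $\delta_c$. Then Step~1 applies directly.
\item[(ii)] The path passes from the closure of one ball to the closure of another, or leaves a ball and travels a further distance of at least $\delta_c$. Since distinct balls are $2\delta_c$ apart, in either situation there is a last exit time $s$ from a ball followed by a time $s'$ with $|q(s')-q(s)|=\delta_c$, such that $q$ does not reenter any ball in $(s,s')$.
\end{itemize}
This is the step I expect to be the main obstacle. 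The difficulty is the careful first-exit/first-entry bookkeeping, especially excluding the possibility that the path oscillates near a single ball boundary while still achieving displacement $2$. I would handle it by taking $s$ as the last time in $[t_k,t_{k+1}]$ at which $q$ lies in the closure of the ball containing $q(t_k)$ (or in its $\delta_c$-neighbourhood), and $s'$ as the first subsequent time at which the distance from that ball reaches $\delta_c$. The gap of $2\delta_c$ between balls guarantees that no other ball is entered in $(s,s')$.

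\emph{Step 4 (conclusion).} Summing over the disjoint blocks gives $M\ge J_{c,(\sigma,\tau)}(q)\ge K\kappa$. Hence
\[
|q(\tau)-q(\sigma)|\le 2M/\kappa+2=:R_0(c,M).
\]
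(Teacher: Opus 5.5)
Your argument is correct, but it takes a different route from the paper.

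The paper argues by contradiction. Along a sequence with $|q_n(\tau_n)-q_n(\sigma_n)|\to\infty$ it fixes a coordinate $i$ whose displacement diverges. It then uses the slabs $S_{i,j}=\{1/2-\delta_c<x_i-j<1/2+\delta_c\}$, which lie in $\{\dist(\cdot,\mathbb Z^N)>1/2-\delta_c\}$ because the $i$-th coordinate alone keeps $x$ away from every lattice point. Each slab separates $\mathbb R^N$, so the intermediate value theorem applied to the scalar function $q_i$ gives disjoint full crossings. Each crossing has displacement at least $2\delta_c$ inside $\{V\ge c+\nu_c\}$, and Lemma~\ref{l4} bounds their number by $M/(\lambda_{\nu_c}H(2\delta_c))$.

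You instead keep the balls $\bar B_{1/2-\delta_c}(\xi)$ and cut $[\sigma,\tau]$ greedily into blocks of displacement exactly $2$. In each block you find an exit arc of length $\delta_c$ that avoids all closed balls, thanks to the $2\delta_c$ gap between them. The slab device makes the forced-crossing count a trivial one-dimensional matter, with no exit/entry case analysis. Your version is direct, needs no contradiction and no projection onto a coordinate (so no factor $\sqrt N$ is lost), and yields the explicit constant $R_0=2M/\kappa+2$.

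Step 3 as written does not yet cover the case where $q(t_k)$ lies in no ball. Here is a clean way to close it.
\begin{itemize}
\item[(1)] If the block meets some closed ball $\bar B_{1/2-\delta_c}(\xi)$ at a time $t^*$, then one of $q(t_k)$, $q(t_{k+1})$ lies outside $\bar B_{1/2}(\xi)$, since that ball has diameter $1<2$. That endpoint is therefore at distance greater than $\delta_c$ from $\bar B_{1/2-\delta_c}(\xi)$.
\item[(2)] You can then run your last-exit construction forward from $t^*$. Alternatively, run the symmetric first-entry construction backward from $t^*$ toward $t_k$.
\item[(3)] If the block meets no closed ball, the whole block is already a crossing of displacement $2$ inside $\{\dist(\cdot,\mathbb Z^N)>1/2-\delta_c\}$.
\end{itemize}
Working with closed balls in this way also removes the equality case you worried about in Step 1, since every arc you produce lies strictly outside all closed balls.
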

\begin{proof}
Let $M>0$. Arguing by contradiction, assume that there exist sequences
$(q_{n}) \subset W^{1, \Phi}_{\mathrm{loc}}(\mathbb{R};\mathbb{R}^N)$ and 
    $\sigma_{n}<\tau_{n}\in \mathbb{R}$ satisfying 
    $
    V(q_{n}(t)) \geq c$ on $(\sigma_{n},\tau_{n})$, $ 
    J_{c, (\sigma_{n}, \tau_{n})}(q_{n}) \leq M$
    and
    \[
    |q_{n}(\tau_{n}) - q_{n}(\sigma_{n})| \to +\infty.
    \]
Since $|q_{n}(\tau_{n}) - q_{n}(\sigma_{n})| \to +\infty$, there exists an index
$i\in \{1,\ldots, N\}$ such that
\begin{equation}\label{eq:componenteiinfty}
|q_{n}(\tau_{n})_{i} - q_{n}(\sigma_{n})_{i}| \to +\infty.
\end{equation}
For such value of $i$, define the slabs
\[S_{i,j}=\{x\in\mathbb R^{N}\mid 1/2-\delta_{c}<x_{i}-j<1/2+\delta_{c}\},\quad j\in\mathbb Z.
\]
By definition, $S_{i,j}\subset\mathbb R^{N}\setminus\cup_{\xi\in\mathbb Z^N}\overline B_{1/2-\delta_c}(\xi)$. Hence, by \eqref{eq:deltac}, $V(x)\geq c+\nu_{c}$ for any $x\in S_{i,j}$ and $j\in\mathbb Z$. By \eqref{eq:componenteiinfty} and the continuity of $q_{n}$, it follows that, as $n\to\infty$, the image
$q_{n}((\sigma_{n},\tau_{n}))$ crosses an increasing number of distinct slabs. More precisely, there exist integers $j^{-}_n<j^{+}_n\in\mathbb Z$ with
$j^{+}_n-j^{-}_n\to+\infty$ and times
\[
\sigma_n<\alpha_{j^{-}_n}<\beta_{j^{-}_n}<\alpha_{j^{-}_n+1}<\beta_{j^{-}_n+1}
<\ldots<\alpha_{j^{+}_n}<\beta_{j^{+}_n}<\tau_n
\]
such that, for every $j\in\mathbb Z$ with $j^{-}_n\le j\le j^{+}_n$, one has
\[
(q_n(\alpha_j))_i-j=\tfrac12-\delta_{c},\quad
(q_n(\beta_j))_i-j=\tfrac12+\delta_{c},
\text{
and
}
q_n(t)\in S_{i,j}\quad\text{for all }t\in(\alpha_j,\beta_j).
\]
Then, 
$V(q_{n}(t)) -c\geq \nu_{c}$ for every $t\in (\alpha_j, \beta_j)$ and $j\in [j^{-}_{n},j^{+}_{n}]\cap\mathbb Z$.
Since $|q_n(\beta_j)-q_n(\alpha_j)|\geq 2\delta_{c}$  for any $j\in [j^{-}_{n},j^{+}_{n}]\cap\mathbb Z$, by Lemma \ref{l4}, we obtain 
$$
J_{c, (\alpha_j, \beta_j)}(q_{n}) \geq \lambda_{\nu_{c}}H(2\delta_{c})>0.
$$
Hence \[M\geq J_{c, (\sigma_n, \tau_n)}(q_{n})\geq \sum_{j=j^{-}_{n}}^{j^{+}_{n}} J_{c, (\alpha_j, \beta_j)}(q_{n})\geq (j^{+}_{n}-j^{-}_{n}+1)\lambda_{\nu_{c}}H(2\delta_{c})\quad\forall n\in\N,\]
contradicting $j^{+}_{n}-j^{-}_{n}\to+\infty$ and concluding the proof. 
\end{proof}

Given a partition $\mathbb Z^{N}=Z_-\cup Z_+$ and the associated sets $\mathcal V_c^-(Z_{-})$ and $\mathcal V_c^+(Z_{+})$ as in 
Lemma ~\ref{lZ}, in the following we denote
\[\Gamma_{c}(Z_{-},Z_{+})=\{ q \in \mathcal{X}_c \mid \liminf_{t \to -\infty} \operatorname{dist}(q(t), \mathcal{V}_c^-(Z_{-})) = \liminf_{t \to +\infty} \operatorname{dist}(q(t), \mathcal{V}_c^+(Z_{+})) = 0 \}.\]

Thanks to Lemma~\ref{lZ} and Lemma~\ref{R:1},  \hyperlink{V}{$(\mathcal{V}_c)$} and \hyperlink{J}{$(\mathcal J_c)$} hold for many different choices of the sets $Z_\pm$. 
This makes it possible to obtain multiple geometrically distinct connecting orbits.
To exploit this fact, we proceed inductively, 
adapting the arguments in~\cite{BolotinKozlov, [BM]}, and repeatedly using the following property.
\begin{lemma}\label{L:decompZpm} For any $c\in [0,c^*)$ there exists $R=R(c)>0$ such that, for every partition $\mathbb Z^N=Z_-\cup Z_+$ satisfying  
\begin{equation}\label{eq:Z-}0\in Z_-\neq\mathbb Z^{N}\text{ and }\xi+Z_-\subset Z_{-}\text{ for any }\xi\in Z_{-},\end{equation} 
letting $\mathcal V_{c}^{\pm}(Z_{\pm})$ be the the sets given by Lemma~\ref{lZ}, the functional $J_c$ satisfies \hyperlink{J}{($\mathcal{J}_c$)} on $\Gamma_{c}(Z_{-},Z_{+})$ 
with $R=R(c)$, namely, 
$$ \inf_{q \in \Gamma_c(Z_{-},Z_{+})} J_c(q) = \inf \left\{ J_c(q) : q \in \Gamma_c(Z_{-},Z_{+}) \text{ and } \|q\|_{L^{\infty}(\mathbb{R}; \mathbb{R}^N)} \leq R \right\}. $$
\end{lemma}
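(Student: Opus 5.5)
The plan is to combine three ingredients: a partition-independent upper bound on the minimal level, the uniform displacement estimate of Lemma~\ref{R:1}, and an integer translation that normalizes the trajectory near the origin. The semigroup property \eqref{eq:Z-} is what makes the translated trajectory stay in the same class $\Gamma_c(Z_-,Z_+)$.

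\emph{Step 1: a uniform bound on the minimal level.} Set $m_c(Z_-,Z_+)=\inf_{\Gamma_c(Z_-,Z_+)}J_c$. I will show that $m_c(Z_-,Z_+)\le M_0$, where
\[
M_0:=\Phi(1)+\max_{[0,1]^N}V
\]
does not depend on the partition.
\begin{itemize}
\item Since $Z_-\neq\mathbb Z^N$ and $0\in Z_-$, pick $\xi'\in Z_+$. Moving from $0$ to $\xi'$ by unit lattice steps, some step goes from a point $\xi\in Z_-$ to $\xi+e_k\in Z_+$ (or $\xi-e_k\in Z_+$).
\item On the unit segment joining these two points, apply the truncation argument used for \eqref{eq:segment}, with $\mathcal V_c^\pm(Z_\pm)$ in place of $\mathcal V_c^\pm$. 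This gives a subsegment $[x_-,x_+]$ with $x_\pm\in\mathcal V_c^\pm(Z_\pm)$, $|x_+-x_-|\le1$, and $V>c$ in its interior.
\item Use the piecewise linear competitor $\overline q$ from the proof of Lemma~\ref{l3}. It lies in $\Gamma_c(Z_-,Z_+)$, and by $\mathbb Z^N$-periodicity of $V$ its action is at most $\Phi(1)+\max V-c\le M_0$.
\end{itemize}
Hence it suffices to consider trajectories $q\in\Gamma_c(Z_-,Z_+)$ with $J_c(q)\le M:=M_0+1$.

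\emph{Step 2: uniform oscillation bound.} Let $q\in\Gamma_c(Z_-,Z_+)$ with $J_c(q)\le M$. Since the integrand is nonnegative, $J_{c,(\sigma,\tau)}(q)\le M$ for every $\sigma<\tau$. Lemma~\ref{R:1} then gives
\[
|q(\tau)-q(\sigma)|\le R_0:=R_0(c,M)\qquad\text{for all }\sigma<\tau.
\]
So $q$ is bounded, with $q(\mathbb R)\subset \overline B_{R_0}(q(0))$, and this ball meets only finitely many of the sets $\mathcal V_{c,\eta}$, $\eta\in\mathbb Z^N$.

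\emph{Step 3: translation.} By the $\liminf$ condition at $-\infty$ and finiteness, there are $t_n\to-\infty$ and a fixed $\xi\in Z_-$ with $\dist(q(t_n),\mathcal V_{c,\xi})\to0$. Similarly there are $s_n\to+\infty$ and a fixed $\xi'\in Z_+$ with $\dist(q(s_n),\mathcal V_{c,\xi'})\to0$. Define $\tilde q=q-\xi$.
\begin{itemize}
\item By periodicity of $V$, $\tilde q\in\mathcal X_c$ and $J_c(\tilde q)=J_c(q)$.
\item Since $\mathcal V_{c,\eta}-\xi=\mathcal V_{c,\eta-\xi}$, along $t_n$ the trajectory $\tilde q$ approaches $\mathcal V_{c,0}$, and $0\in Z_-$.
\item Along $s_n$, $\tilde q$ approaches $\mathcal V_{c,\xi'-\xi}$. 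If $\xi'-\xi\in Z_-$, then by \eqref{eq:Z-} we would get $\xi'=\xi+(\xi'-\xi)\in Z_-$, a contradiction. So $\xi'-\xi\in Z_+$, and therefore $\tilde q\in\Gamma_c(Z_-,Z_+)$.
\end{itemize}
Finally, since $\mathcal V_{c,0}\subset\overline B_{1/2}(0)$, for large $n$ we get, for all $t$,
\[
|\tilde q(t)|\le|q(t)-q(t_n)|+|q(t_n)-\xi|\le R_0+1.
\]
Thus $R(c):=R_0(c,M_0+1)+1$ works. Applying this to a minimizing sequence gives the equality of infima in the statement.

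I expect Step 1 to be the main obstacle. The uniform control of $m_c$ across all admissible partitions is what feeds Lemma~\ref{R:1} a bound $M$ that does not depend on $Z_\pm$. The truncation to a sub-segment where $V>c$ must be done relative to the whole union $\mathcal V_c^-(Z_-)\cup\mathcal V_c^+(Z_+)$, which is fine because $\mathcal V_c$ is closed by Lemma~\ref{lZ}.
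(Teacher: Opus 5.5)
Your proof is correct and follows the paper's argument: a partition-independent bound on $m_c$ via an explicit one-cell competitor, the oscillation bound of Lemma~\ref{R:1}, and an integer translation whose admissibility at $+\infty$ uses $\xi+Z_-\subset Z_-$. The differences are minor and, if anything, in your favor. You find the unit lattice step from $Z_-$ to $Z_+$ by a lattice walk, where the paper uses the semigroup property to get $\pm e_m\in Z_+$. You also first fix, via the finiteness from Step~2, a single cell $\xi\in Z_-$ accumulated as $t\to-\infty$, which makes the $\liminf$ condition at $-\infty$ for the translated curve rigorous; the paper handles this loosely by tying $t_n^-$ to the index of the minimizing sequence.
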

\begin{proof} Let $
m_c=\inf_{\Gamma_c(Z_{-},Z_{+})} J_c
$. We claim that
\begin{equation}\label{eq:boundmc}m_{c}\le \Phi(1)+\displaystyle\max_{|x|\leq 1} V(x):=M.\end{equation}
To prove \eqref{eq:boundmc}, denoting by $e_m$ the $m$-th canonical basis vector of $\mathbb R^N$, we observe
that, since $Z_-\subset\mathbb Z^N$ is invariant under addition and
$Z_-\neq\mathbb Z^N$, it cannot contain both $e_m$ and $-e_m$ for every
$m\in\{1,\ldots,N\}$. Indeed, if this were the case, then by closure under
addition one would have $Z_-=\mathbb Z^N$, a contradiction.
Hence there exists $m\in\{1,\ldots,N\}$ and $\xi_{+}\in\{e_{m},-e_{m}\}$ such that $\xi_{+}\in Z_+$.

Consider the interpolating function $\zeta$, defined by 
$
\zeta(t) = t \xi_{+}$ if $t \in [0,1]$, $\zeta(t)=0$ if $t \leq 0$, and $\zeta(t)=\xi_{+}$ if $t \geq 1$.
Set
$$\sigma=\sup\{t\in [0,1]\mid \zeta(t)\in \mathcal V_{c,0}\},\quad \tau=\inf\{t\in (\sigma,1]\mid \zeta(t)\in \mathcal V_{c,\xi_{+}}\}.$$
By continuity $\zeta(\sigma)\in  \mathcal V_{c,0}$,  $\zeta(\tau)\in  \mathcal V_{c,\xi_{+}}$ and $\zeta(t)\notin \mathcal V_{c,0}\cup \mathcal V_{c,\xi_{+}}$ for any $t\in (\sigma,\tau)$. Hence, 
because by construction $\zeta([0,1])\subset  \bar B_{1/2}(0)\cup \bar B_{1/2}(\xi_{+})$ and $\mathcal V_{c}\setminus (\mathcal{V}_{c,0}\cup \mathcal{V}_{c,\xi_{+}})\subset \R^{N}\setminus (\bar B_{1/2}(0)\cup \bar B_{1/2}(\xi_{+}))$, we obtain that $\zeta(t)\in\R^{N}\setminus \mathcal V_{c}$, and so,   
$
V(\zeta(t)) > c$, for all $t \in (\sigma,\tau).$

Defining the function
$$
\eta(t) =
\begin{cases}	
	\zeta(\sigma), & t\leq \sigma, \\
	\zeta(t), & t \in (\sigma, \tau), \\
	\zeta(\tau), & t \geq \tau,
\end{cases}
$$
we have $\eta\in \Gamma_{c}(Z_{-},Z_{+})$, and since $|\dot \eta|\leq 1$, it follows $
m_{c}\le J_c(\eta)\leq \Phi(1)+\displaystyle\max_{|x|\leq 1} V(x)
$,  as we claimed.  

Let $(q_n)\subset \Gamma_c(Z_{-},Z_{+})$ be a minimizing sequence. By \eqref{eq:boundmc} we can assume that $J_c(q_{n})\leq M+1$ for all $n$. 
By definition of $\Gamma_c(Z_{-},Z_{+})$, there exist sequences $t_n^-\to -\infty$ and $\xi_n^-\in Z_-$ such that
\[
\lim_{n\to+\infty} \dist(q_n(t_n^-),\mathcal V_c^-(Z_{-}))=
\lim_{n\to+\infty} \dist(q_n(t_n^-),\mathcal V_{c,\xi_n^-})=0.
\]
Since $\mathcal V_{c,\xi_n^-}\subset B_{1/2}(\xi_n^-)$ we can choose the sequence $(t_n^-)$ such that 
\[|q_n(t_n^-)-\xi_{n}^{-}|\leq 1/2\text{ for all }n\in\N.\]
Consider the translated sequence 
\[
v_n(t)=q_n(t)-\xi_n^-.
\]
By $\mathbb Z^{N}$-periodicity of $V$, we have $V(v_n(t))\geq c$ for all $t\in\mathbb R$ and $J_c(v_n)=J_c(q_n)\leq M+1$. 
Moreover, since $\mathcal V_{c,0}=\mathcal V_{c,\xi_n^-}-\xi_n^-$, we obtain
\begin{equation}
\label{eq:distvn}
\dist\bigl(v_n(t_n^-),\mathcal V_{c,0}\bigr)=
\dist\bigl(q_n(t_n^-),\mathcal V_{c,\xi_n^-}\bigr)\to 0\text{ and }|v_n(t_n^-)|\leq 1/2\text{ for all }n\in\N.
\end{equation}
Since $J_c(v_n)\leq M+1$ and $V(v_n(t))\geq c$ for all $t$, Lemma \ref{R:1} yields a constant $R_{0}=R_{0}(c,M+1)$ such that
$
|v_n(t)-v_n(t_n^-)|\leq R_{0}$ for all $n\in\mathbb N$ and $t\in\mathbb R$. Therefore, setting $R(c)=1/2+ R_{0}$ we obtain
\begin{equation}\label{eq:boundinfty}
	\|v_{n}\|_{L^\infty(\R;\R^N)}\leq \sup_{n\in\N}|v_n(t_n^-)|+R_{0}\leq R(c)\text{ for all }n\in\N.
\end{equation}
To conclude the proof of the Lemma, by \eqref{eq:boundinfty} it is sufficient to show that $(v_{n})$ is a minimizing sequence of $J_{c}$ on $\Gamma_{c}(Z_{-},Z_{+})$. To this aim, we first recall that  $J_c(v_n)=J_c(q_n)\to m_{c}$. Secondly, by \eqref{eq:distvn} we  have also $\liminf_{t\to -\infty} \dist(v_n(t),\mathcal V_c^-(Z_{-}))=0$. Finally, since $q_{n}\in \Gamma_{c}(Z_{-},Z_{+})$, there  exist sequences $t_n^+\to +\infty$ and $\xi_n^+\in Z_+$ such that
\[
\lim_{n\to+\infty} \dist(q_n(t_n^+),\mathcal V_c^+(Z_{+}))=
\lim_{n\to+\infty} \dist(q_n(t_n^+),\mathcal V_{c,\xi_n^+})=0.
\]
Since $v_{n}=q_{n}-\xi_{n}^{-}$, we obtain
$
\lim_{n\to+\infty} \dist(v_n(t_n^+),\mathcal V_{c,\xi_n^+-\xi_{n}^{-}})=0.$ Then, noting that $\xi_n^+-\xi_{n}^{-}\in Z_{+}$ (since $\xi_{n}^{-}+Z_{-}\subset Z_{-}$), we conclude
\[\lim_{n\to+\infty} \dist(v_n(t_n^+),\mathcal V_{c}^{+}(Z_{+}))=\lim_{n\to+\infty} \dist(v_n(t_n^+),\cup_{\xi\in Z_{+}}\mathcal V_{c,\xi})\leq\dist(v_n(t_n^+),\mathcal V_{c,\xi_n^+-\xi_{n}^{-}})=0,\]
showing that $\liminf_{t\to+\infty} \dist(v_n(t),\mathcal V_{c}^{+}(Z_{+}))=0$, and so that $v_{n}\in\Gamma_{c}(Z_{-},Z_{+})$.	
\end{proof}

Lemma~\ref{lZ} provides, for every $c\in[0,c^*)$, a decomposition
$\mathcal V_c=\mathcal V_c^-(Z_{-})\cup\mathcal V_c^+(Z_{+})$ associated with any partition
$\mathbb Z^N=Z_-\cup Z_+$, and ensures the separation condition \hyperlink{V}{$(\mathcal V_c)$}.
Moreover, if the partition is admissible in the sense of \eqref{eq:Z-},
then Lemma~\ref{R:1} and Lemma~\ref{L:decompZpm} yield the coercivity condition
\hyperlink{J}{$(\mathcal J_c)$} on $\Gamma_c(Z_{-},Z_{+})$ with a constant $R(c)$ depending only on $c$.
Hence Theorem~\ref{thm:minimal-connection} applies.

\begin{lemma}\label{R:2}
Let $c\in[0,c^*)$. For any $Z_{-}\subset\mathbb Z^N$ satisfying \eqref{eq:Z-},
set $Z_+:=\mathbb Z^N\setminus Z_-$ and let
$\mathcal V_c^\pm(Z_{\pm})$ be the sets
given by Lemma~\ref{lZ}. Then there exist
$\xi^{+}\in Z_{+}$, an interval $I=(\alpha,\omega)\subset\R$ with
$-\infty\le\alpha<\omega\le+\infty$, and a minimal $c$--connection
$q:\bar I\to\R^{N}$ between $\mathcal V_c^-(Z_{-})$ and $\mathcal V_c^+(Z_{+})$
 such that
\[
\lim_{t\to\alpha^+}\dist\bigl(q(t),\mathcal V_{c,0}\bigr)=0,\quad
\lim_{t\to\omega^-}\dist\bigl(q(t),\mathcal V_{c,\xi^{+}}\bigr)=0,\quad
\|q\|_{L^{\infty}(I;\mathbb R^{N})}\leq 2\,R(c).
\]
\end{lemma}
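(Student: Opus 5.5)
We plan to apply Theorem~\ref{thm:minimal-connection}, or rather its proof through Lemma~\ref{l7}, to the splitting $\mathcal V_c=\mathcal V_c^-(Z_-)\cup\mathcal V_c^+(Z_+)$. Condition \hyperlink{V}{$(\mathcal V_c)$} holds by Lemma~\ref{lZ}, and \hyperlink{J}{$(\mathcal J_c)$} holds on $\Gamma_c(Z_-,Z_+)$ with $R=R(c)$ by Lemma~\ref{L:decompZpm}. We would obtain a minimizer $q_0\in\mathcal M_c$ with $\|q_0\|_{L^\infty}\le R(c)$, and define $q=q_0|_{\bar I}$ with $I=(\alpha_{q_0},\omega_{q_0})$; this is a minimal $c$--connection between $\mathcal V_c^-(Z_-)$ and $\mathcal V_c^+(Z_+)$. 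What remains is to localize the limits on single components $\mathcal V_{c,\xi}$ and to normalize the left one to $\xi=0$ by an integer translation, which costs at most a doubling of the $L^\infty$ bound.

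\textbf{Step 1: localization of the limits.} The components $\mathcal V_{c,\xi}$, $\xi\in\mathbb Z^N$, are pairwise at distance $\ge 2\delta_c$. Lemma~\ref{l8} gives $\dist(q(t),\mathcal V_c^-(Z_-))\to0$ as $t\to\alpha^+$. So for $t$ close to $\alpha$ there is a unique $\xi(t)\in Z_-$ with $\dist(q(t),\mathcal V_{c,\xi(t)})<\delta_c$. By continuity of $q$ and the $2\delta_c$ separation, $\xi(t)$ is locally constant, hence constant, say $\xi^-$. It follows that $\dist(q(t),\mathcal V_{c,\xi^-})\to0$ as $t\to\alpha^+$; in the finite case this just says $q(\alpha)\in\mathcal V_{c,\xi^-}$. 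The same argument at $\omega$ gives some $\tilde\xi^+\in Z_+$.

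\textbf{Step 2: normalization.} Set $\tilde q=q-\xi^-$ and $\xi^+=\tilde\xi^+-\xi^-$. By $\mathbb Z^N$-periodicity, $\tilde q$ still solves \eqref{S}, has the same energy, satisfies $V(\tilde q)>c$ on $I$, and has $J_{c,I}(\tilde q)=m_c$. Its left limit set is $\mathcal V_{c,\xi^--\xi^-}=\mathcal V_{c,0}$, and $0\in Z_-$. For the right limit we need $\xi^+\in Z_+$. If instead $\xi^+\in Z_-$, the additivity in \eqref{eq:Z-} would give $\tilde\xi^+=\xi^-+\xi^+\in Z_-$, a contradiction. Hence $\tilde q$ is a minimal $c$--connection between $\mathcal V_c^-(Z_-)$ and $\mathcal V_c^+(Z_+)$ with the required limits. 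Minimality is unchanged, because translating any competitor by $\xi^-$ maps $\Gamma_c(Z_-,Z_+)$ into itself; this uses $\xi^-+Z_-\subset Z_-$ and, for the $Z_+$ side, the contrapositive argument just given. So $m_c$ is the same level.

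\textbf{Step 3: the bound.} Since $q(t)$ approaches $\mathcal V_{c,\xi^-}\subset \overline B_{1/2}(\xi^-)$ and $|q|\le R(c)$, we get $|\xi^-|\le R(c)+1/2$. Therefore $\|\tilde q\|_{L^\infty}\le 2R(c)+1/2$. To reach exactly $2R(c)$ we would avoid passing through $\|q\|$ and instead use Lemma~\ref{R:1}: some point $\tilde q(t_0)$ lies within $1/2$ of $0$, and $J_{c,I}(\tilde q)=m_c\le M$, so $|\tilde q(t)|\le 1/2+R_0=R(c)\le 2R(c)$. The bound $m_c\le M$ follows from \eqref{eq:boundmc}, and the constant $R_0(c,M+1)\ge R_0(c,M)$ is monotone in $M$.

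The main obstacle is Step 1: the component index must be constant near a contact time, including the case $\alpha=-\infty$. There the limit is only a limit of distances and $q$ might a priori drift between components. We expect to rule this out with the uniform $2\delta_c$ gap between components together with the continuity of $t\mapsto q(t)$, as described above.
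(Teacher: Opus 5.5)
Your proposal is correct and follows the paper's own route. You apply Theorem~\ref{thm:minimal-connection} via Lemmas~\ref{lZ} and \ref{L:decompZpm}, use continuity and the $2\delta_c$ gap to localize both limits on single components $\mathcal V_{c,\zeta^\pm}$, and translate by $-\zeta^-$. You also make explicit two points the paper leaves implicit. First, $\zeta^+-\zeta^-\in Z_+$ follows from $\zeta^-+Z_-\subset Z_-$. Second, the $L^\infty$ bound is properly obtained from Lemma~\ref{R:1} (even giving $R(c)$), whereas the naive estimate $\|q_0\|_\infty+|\zeta^-|$ only gives about $2R(c)+1/2$.

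One small correction: your claim that translating competitors preserves $\Gamma_c(Z_-,Z_+)$ is false when $Z_-$ is only a semigroup, e.g.\ $Z_-=\{0,1,2,\dots\}\subset\mathbb Z$. It is also unnecessary, since property (iv) only requires $J_{c,I}(\tilde q)=m_c$, which holds directly by periodicity of $V$.
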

\begin{proof}
Fix $c\in [0,c^{*})$. Since $Z_{-}$ satisfies \eqref{eq:Z-}, the partition $\mathbb Z^{N}=Z_{-}\cup Z_{+}$ satisfies the assumptions of Lemma \ref{lZ} and Lemma \ref{L:decompZpm}. In particular Lemma \ref{lZ} guarantees that
the property \hyperlink{V}{$(\mathcal V_c)$} holds for the decomposition $\mathcal V_{c}=\mathcal V_{c}^{-}(Z_{-})\cup \mathcal V_{c}^{+}(Z_{+})$. Lemma \ref{L:decompZpm} tells us moreover that $J_{c}$ satisfies \hyperlink{J}{$(\mathcal J_c)$} on $\Gamma_c(Z_{-},Z_{+})$ with $R=R(c)$.
By Theorem \ref{thm:minimal-connection} there exists  an interval
$I=(\alpha,\omega)\subset\R$ with $-\infty\le\alpha<\omega\le+\infty$, and
a minimal $c$--connection  $q_{0}:\bar I\to\R^{N}$
 between $\mathcal V_c^-(Z_{-})$ and $\mathcal V_c^+(Z_{+})$ with $\|q_{0}\|_{L^{\infty}(I;\R^{N})}\leq R(c)$. In particular
\begin{equation}\label{eq:limit+-}\lim_{t\to\alpha^{+}}\dist(q_{0}(t),\mathcal{V}_c^{-}(Z_{-}))=0\text{ and }\lim_{t\to\omega^{-}}\dist(q_{0}(t),\mathcal{V}_c^{+}(Z_{+}))=0.\end{equation}  
By \eqref{eq:limit+-} and the continuity of $q_{0}$, 
since $\mathcal{V}_c^{\pm}(Z_{\pm})$ are unions of the pairwise well separated sets $\{\mathcal{V}_{c,\xi}\}_{\xi\in\mathbb Z^{N}}$, there exist $\zeta^{-}\in Z_{-}$ and $\zeta^{+}\in Z_{+}$ for which 
\begin{equation}\label{eq:limit+-2}\lim_{t\to\alpha^{+}}\dist(q_{0}(t),\mathcal{V}_{c,\zeta^{-}})=0\text{ and }\lim_{t\to\omega^{-}}\dist(q_{0}(t),\mathcal{V}_{c,\zeta^{+}})=0.\end{equation}
Denoting $\xi^{+}=\zeta^{+}-\zeta^{-}$ and $q(t)=q_{0}(t)-\zeta^{-}$ we obtain
that $\|q\|_{L^{\infty}(I;\R^{N})}\leq 2R(c)$,  and, by $\mathbb Z^{N}$ invariance of the problem, $q$ is a minimal $c$-connection
satisfying moreover, by  \eqref{eq:limit+-2}, $
\lim_{t\to\alpha^+}\dist(q(t),\mathcal V_{c,0})=
\lim_{t\to\omega^-}\dist(q(t),\mathcal V_{c,\xi^{+}})=0$.
\end{proof}

We can now state and prove the following multiplicity result.
\begin{proposition}\label{R:3}
Assume that the conditions \hyperlink{phi1}{$(\phi_1)$}--\hyperlink{phi2}{$(\phi_2)$} hold, and let $V\in C^1(\mathbb{R}^N;\mathbb{R})$ satisfy \hyperlink{V5}{$(V_5)$}--\hyperlink{V6}{$(V_6)$}. Then, for every $c \in [0, c^{*})$, there exists $k_c \in \mathbb{N}$ with $k_{c}\geq N$ and vectors $\xi^1, \ldots, \xi^{k_c} \in \mathbb{Z}^N \setminus \{0\}$, such that, setting
\[Z_{-}^{j}:=\big\{\sum_{i=1}^{j-1}n_{i}\xi^{i}\mid n_{i}\in\mathbb Z\big\}\text{ and }Z_{+}^{j}=\mathbb Z^{N}\setminus Z_{-}^{j},\] 
for $j\in \{1,\ldots,k_{c}\}$ (with $Z_{-}^{1}=\{0\}$), the following hold:
\begin{itemize}
\item[(a)] $\xi^{j}\notin Z_{-}^{j}$ 
for any $j\in \{2,\ldots,k_{c}\}$,
\item[(b)] $\mathbb Z^{N}
=\{\sum_{i=1}^{k_{c}}n_{i}\xi^{i}\mid n_{i}\in\mathbb Z\}$,
\end{itemize}
and, for each $j \in \{1, \ldots, k_c\}$,  there exist an interval
$I_{j}=(\alpha_{j},\omega_{j})\subset\R$ with $-\infty\le\alpha_{j}<\omega_{j}\le+\infty$ and
a minimal $c$-connection $q_{c,j}:\bar I_{j}\to\R^{N}$ 
between $\mathcal V_c^-(Z_{-}^{j})$ and $\mathcal V_c^+(Z_{+}^{j})$,
such that, moreover,
\[
\lim_{t\to\alpha_{j}^+}\dist(q_{c,j}(t),\mathcal V_{c,0})=0\text{ and }
\lim_{t\to\omega_{j}^-}\dist(q_{c,j}(t),\mathcal V_{c,\xi^{j}})=0.\]
\end{proposition}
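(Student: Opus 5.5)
We construct the vectors $\xi^j$ and the connections $q_{c,j}$ inductively, applying Lemma~\ref{R:2} at each step, in the spirit of \cite{BolotinKozlov,[BM]}. At step $j$ the set $Z_-^j$ is the subgroup of $\mathbb Z^N$ generated by $\xi^1,\dots,\xi^{j-1}$ (with $Z_-^1=\{0\}$). As long as $Z_-^j\neq\mathbb Z^N$, the set $Z_-^j$ satisfies \eqref{eq:Z-}: it contains $0$, it is different from $\mathbb Z^N$, and it is closed under addition, since a subgroup satisfies $\xi+Z_-^j\subset Z_-^j$ for every $\xi\in Z_-^j$. Lemma~\ref{R:2} then gives some $\xi^+\in Z_+^j=\mathbb Z^N\setminus Z_-^j$ and a minimal $c$--connection $q:\bar I\to\R^N$ between $\mathcal V_c^-(Z_-^j)$ and $\mathcal V_c^+(Z_+^j)$ that starts near $\mathcal V_{c,0}$ and ends near $\mathcal V_{c,\xi^+}$. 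We set $\xi^j:=\xi^+$ and $q_{c,j}:=q$. Since $\xi^j\in Z_+^j$, we have $\xi^j\notin Z_-^j$, which is (a). Also $\xi^j\neq 0$, because $0\in Z_-^j$.

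The construction stops at the first index $j$ for which $\langle\xi^1,\dots,\xi^j\rangle=\mathbb Z^N$, and we call that index $k_c$. Once termination is established, (b) holds by construction.

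Termination, and the bound $k_c\ge N$, both follow from looking at the chain of subgroups
\[
Z_-^1\subsetneq Z_-^2\subsetneq\cdots\subset\mathbb Z^N .
\]
Each inclusion is strict because $\xi^j\notin Z_-^j$ while $\xi^j\in Z_-^{j+1}$. For the lower bound: $\mathbb Z^N$ has rank $N$, and a subgroup generated by $j$ elements has rank at most $j$, so equality with $\mathbb Z^N$ forces $k_c\ge N$.

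For termination we have to rule out an infinite strictly increasing chain of subgroups. This is where a uniform bound enters. By Lemma~\ref{R:2}, $\|q_{c,j}\|_{L^\infty}\le 2R(c)$, and $q_{c,j}$ approaches $\mathcal V_{c,\xi^j}\subset \overline B_{1/2}(\xi^j)$. Hence $|\xi^j|\le 2R(c)+1/2$, so every $\xi^j$ lies in the finite set $F=\mathbb Z^N\cap \overline B_{2R(c)+1/2}(0)$. The $\xi^j$ are pairwise distinct, since $\xi^j\in Z_-^{i}$ for $i>j$ while $\xi^i\notin Z_-^i$. It follows that $k_c\le\#F<\infty$.

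Uniform boundedness alone, however, does not show that the process ends with the full group $\mathbb Z^N$, because Lemma~\ref{R:2} is only applicable while $Z_-^j\neq\mathbb Z^N$. What we actually need is the following: whenever $Z_-^j\neq\mathbb Z^N$, a new vector exists. Lemma~\ref{R:2} supplies exactly that, and the strict chain inside the finite set $F$ cannot continue forever. So at some index the subgroup must equal $\mathbb Z^N$, and we stop there. Noetherianity of $\mathbb Z^N$ would give termination just as well.

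We expect the main obstacle to be checking the uniform bound $R(c)$ from Lemma~\ref{L:decompZpm}. This constant must not depend on the partition, and the normalization in Lemma~\ref{R:2}, which translates by $-\zeta^-$, must keep the starting set equal to $\mathcal V_{c,0}$. This requires $\zeta^-\in Z_-^j$, so that translating by $-\zeta^-$ preserves both $Z_-^j$ and $Z_+^j$. That holds precisely because $Z_-^j$ is a group.
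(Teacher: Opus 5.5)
Your proposal is correct and is the paper's proof. Both apply Lemma~\ref{R:2} inductively with $Z_-^{j}$ the subgroup generated by $\xi^1,\dots,\xi^{j-1}$, obtain termination because the pairwise distinct $\xi^j$ lie in a ball whose radius is controlled by the partition-independent constant $R(c)$, and get $k_c\ge N$ by counting generators of $\mathbb Z^N$. Your bound $|\xi^j|\le 2R(c)+1/2$ is slightly more careful than the paper's $|\xi^j|\le 2R(c)$, and you are right that Noetherianity of $\mathbb Z^N$ alone already forces termination.
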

\begin{remark}
The connecting orbits $q_{c,j}$ for $j\in\{1,\ldots,k_c\}$ are geometrically
distinct. Indeed, let $j>i$. Since $\xi^{j}$ does not belong to the subgroup
generated by $\xi^{1},\ldots,\xi^{j-1}$, in particular we have
$\xi^{j}\neq \pm \xi^{i}$. 

Time translations preserve the endpoints of a connection, while time reflection
followed by a $\mathbb Z^{N}$--translation can only replace $\xi^{i}$ with
$-\xi^{i}$. Hence no combination of time translation, time reflection and
$\mathbb Z^{N}$--spatial translations preserving the initial point can transform $q_{c,i}$ into $q_{c,j}$.
\end{remark}

\begin{proof} Fix $c\in [0,c^{*})$ and proceed by induction. We start by considering the partition
\[Z_{-}^{1}=\{0\},\quad Z_{+}^{1}=\mathbb Z^{N}\setminus Z_{-}^{1}.\]
Since $Z_{-}^{1}$ plainly satisfies \eqref{eq:Z-}, Lemma \ref{R:2} gives the existence of $\xi^{1}\in\mathbb Z^{N}\setminus\{0\}$, an interval
$I_{1}=(\alpha_{1},\omega_{1})\subset\R$ with $-\infty\le\alpha_{1}<\omega_{1}\le+\infty$, and a minimal $c$--connection $q_{c,1}:\bar I_{1}\to\R^{N}$
between $\mathcal V_c^-(Z_{-}^{1})$ and $\mathcal V_c^+(Z_{+}^{1})$,
such that
\[
\lim_{t\to\alpha^+}\dist(q_{c,1}(t),\mathcal V_{c,0})=
\lim_{t\to\omega^-}\dist(q_{c,1}(t),\mathcal V_{c,\xi^{1}})=0\text{ and }\|q_{c,1}\|_{L^{\infty}(I_{1};\mathbb R^{N})}\leq 2R(c).\]
Assume that for some $j\ge 1$ there exist $\xi^1,\ldots,\xi^j\in\mathbb Z^N\setminus\{0\}$ such that, setting $Z_{-}^{i}=\{\sum_{\iota=1}^{i-1} n_\iota \xi^\iota \;:\; n_\iota\in\mathbb Z\}$ and $Z_{+}^{i}=\mathbb Z^{N}\setminus Z_{-}^{i}$, we have
\[
\xi^i\notin Z_{-}^{i}
\quad\text{for all } i=2,\ldots,j,
\qquad
\mathbb Z^N\neq S_j:=\{\sum_{\iota=1}^{j} n_\iota \xi^\iota \;:\; n_\iota\in\mathbb Z\},
\]
and that for each $i=1,\ldots,j$ there exist an interval $I_i=(\alpha_i,\omega_i)$ and a minimal $c$--connection
$q_{c,i}:\bar I_i\to\R^N$ between $\mathcal V_c^-(Z_{-}^{i})$ and $\mathcal V_c^+(Z_{+}^{i})$ such that
\[
\lim_{t\to\alpha_{i}^+}\dist(q_{c,i}(t),\mathcal V_{c,0})=
\lim_{t\to\omega_{i}^-}\dist(q_{c,i}(t),\mathcal V_{c,\xi^{i}})=0,
\qquad
\|q_{c,i}\|_{L^{\infty}(I_{i};\mathbb R^{N})}\le 2R(c).
\]

Define the new partition
\[
Z_-^{j+1}:=S_j,\qquad Z_+^{j+1}:=\mathbb Z^N\setminus Z_-^{j+1}.
\]
Since $S_j$ is a subgroup of $\mathbb Z^N$ containing $0$ and, by the inductive
assumption, $S_j\neq\mathbb Z^N$, the set $Z_-^{j+1}$ satisfies
\eqref{eq:Z-}. 
Let $\mathcal V_{c}^{-}(Z_-^{j+1})$ and $\mathcal V_{c}^{+}(Z_+^{j+1})$
be the corresponding decomposition of $\mathcal V_c$ given by
Lemma~\ref{lZ}. 
Lemma~\ref{R:2} then yields the existence of
$\xi^{j+1}\in Z_+^{j+1}$, an interval
$I_{j+1}=(\alpha_{j+1},\omega_{j+1})\subset\R$
with $-\infty\le\alpha_{j+1}<\omega_{j+1}\le+\infty$,
and a minimal $c$--connection $q_{c,j+1}$ between $\mathcal V_{c}^{-}(Z_-^{j+1})$ and $\mathcal V_{c}^{+}(Z_+^{j+1})$
such that
\[
\lim_{t\to\alpha_{j+1}^+}\dist(q_{c,j+1}(t),\mathcal V_{c,0})=
\lim_{t\to\omega_{j+1}^-}\dist(q_{c,j+1}(t),\mathcal V_{c,\xi^{j+1}})=0\text{ and }\|q_{c,j+1}\|_{L^{\infty}(I_{j+1};\mathbb R^{N})}\leq 2R(c).\]
By construction $\xi^{j+1}\in Z_+^{j+1}=\mathbb Z^N\setminus S_j$,
hence
\[
\xi^{j+1}\notin S_j=
\{\sum_{\iota=1}^{j} n_\iota \xi^\iota \;:\; n_\iota\in\mathbb Z\},
\]
so that $\xi^{j+1}$ does not belong to the subgroup generated by the previous generators.

By induction, as long as $S_j\neq\mathbb Z^N$, Lemma~\ref{R:2}
produces a new vector $\xi^{j+1}$ and a corresponding connecting orbit
$q_{c,j+1}$. 
Since $q_{c,j+1}$ connects $0$ to $\xi^{j+1}$ and
$\|q_{c,j+1}\|_{L^\infty(\R;\R^N)}\le 2R(c)$, we necessarily have
$|\xi^{j+1}|\le 2R(c)$. 
Hence only finitely many distinct integer vectors can occur.
Therefore there exists $k_c$ such that $S_{k_c}=\mathbb Z^N$,
and the construction stops. Since $S_{k_c}=\mathbb Z^N$ and $S_{k_c}$ is generated by
$\{\xi^1,\ldots,\xi^{k_c}\}$, we must have $k_c\ge N$.
\end{proof}


\end{document}